\newcommand{\Z}{{\mathbb{Z}}}
\newcommand{\C}{{\mathbb{C}}}
\newcommand{\uloopr}[1]{\ar@'{@+{[0,0]+(-4,5)}@+{[0,0]+(0,10)}@+{[0,0] +(4,5)}}^{#1}}
\newcommand{\uloopd}[1]{\ar@'{@+{[0,0]+(5,4)}@+{[0,0]+(10,0)}@+{[0,0]+ (5,-4)}}^{#1}}
\newcommand{\dloopr}[1]{\ar@'{@+{[0,0]+(-4,-5)}@+{[0,0]+(0,-10)}@+{[0, 0]+(4,-5)}}_{#1}}
\newcommand{\dloopd}[1]{\ar@'{@+{[0,0]+(-5,4)}@+{[0,0]+(-10,0)}@+{[0,0 ]+(-5,-4)}}_{#1}}
\newcommand{\calV}{{\mathcal V}}
\newcommand{\gotr}{{\mathfrak r}}
\newcommand{\luloop}[1]{\ar@'{@+{[0,0]+(-8,2)}@+{[0,0]+(-10,10)}@+{[0, 0]+(2,2)}}^{#1}}
\newcommand{\dotedge}{\ar@{.}}
\newcommand{\eqedge}{\ar@{=}}
\newcommand{\FSGr}{\mathbf{FSGr}}
\newcommand{\BFSGr}{\mathbf{BFSGr}}
\DeclareMathOperator{\coker}{coker}
\newcommand{\mon}[1]{\calV(#1)} 
\newcommand{\So}{{\rm Source}}
\newcommand{\Cstaralg}{C^*\text{-}\mathbf{alg}}
\newcommand{\bgast}{\mbox{\Large$*$}}
\numberwithin{equation}{section}
\theoremstyle{plain}
\newtheorem{theorem}{Theorem}[section]
\newtheorem{lemma}[theorem]{Lemma}
\newtheorem{proposition}[theorem]{Proposition}
\newtheorem{corollary}[theorem]{Corollary}
\theoremstyle{definition}
\newtheorem{definition}[theorem]{Definition}
\newtheorem{example}[theorem]{Example}
\newtheorem{remark}[theorem]{Remark}
\newtheorem*{remark*}{Remark}
\newtheorem*{assumption*}{Assumption}
\newtheorem{construction}[theorem]{Construction}
\newtheorem{notation}[theorem]{Notation}
\begin{document}

\parindent 20pt

\null\vskip-1cm

\title[K-theory]{$K$-theory for the tame C*-algebra of a separated graph}%
\author{Pere Ara}
\address{Departament de Matem\`atiques, Universitat Aut\`onoma de Barcelona,
08193 Bellaterra (Barcelona), Spain.} \email{para@mat.uab.cat}
\author{ Ruy Exel}
\address{Departamento de Matem\'atica, Universidade Federal de Santa Catarina,
88010-970 Florian\'opolis SC, Brazil.}\email{exel@mtm.ufsc.br}
\thanks{The first named author was partially supported by DGI MICIIN-FEDER
MTM2011-28992-C02-01, and by the Comissionat per Universitats i
Recerca de la Generalitat de Catalunya. The second named author was
partially supported by CNPq.} \subjclass[2000]{Primary 46L35;
Secondary 46L80} \keywords{Graph C*-algebra, separated graph,  dynamical
system, refinement monoid, K-theory, partial action, crossed product}
\date{\today}

\maketitle

\begin{abstract} A {\it separated graph} is a pair $(E,C)$ consisting of a directed
graph $E$ and a set $C=\bigsqcup _{v\in E^0} C_v$, where each $C_v$
is a partition of the set of edges whose terminal vertex is $v$.
Given a separated graph $(E,C)$, such that all the sets $X\in C$ are finite, the K-theory of the graph C*-algebra
$C^*(E,C)$ is known to be determined by the kernel and the cokernel of a certain map, denoted by $1_C- A_{(E,C)}$, from $\Z^{(C)}$ to $\Z^{(E^0)}$.
In this paper, we compute the K-theory of the {\it tame} graph C*-algebra $\mathcal O (E,C)$ associated to $(E,C)$,
which has been recently introduced by the authors. Letting $\pi$ denote the natural surjective homomorphism from
$C^*(E,C)$ onto $\mathcal O (E,C)$, we show that $K_1(\pi)$ is a group isomorphism, and that $K_0(\pi)$ is a split monomorphism, whose cokernel is
a torsion-free abelian group. We also prove that this cokernel is a free abelian group when the graph $E$ is finite, and determine its generators in terms
of a sequence of separated graphs $\{ (E_n, C^n) \}_{n=1}^{\infty}$ naturally attached to $(E,C)$.
On the way to showing our main results, we obtain an explicit  description of a connecting map  arising in a six-term exact sequence computing the
K-theory of an amalgamated free product, and we also exhibit an explicit isomorphism between $\ker (1_C - A_{(E,C)})$ and
$K_1(C^*(E,C))$.
\end{abstract}

\section{Introduction}

A {\it separated graph} is a pair $(E,C)$ consisting of a directed
graph $E$ and a set $C=\bigsqcup _{v\in E^0} C_v$, where each $C_v$
is a partition of the set of edges whose terminal vertex is $v$.
Their associated C*-algebras $C^*(E,C)$ (\cite{AG2}, \cite{Aone-rel}) provide
generalizations of the usual graph C*-algebras (see e.g. \cite{Raeburn}) associated to
directed graphs, although these algebras behave quite differently
from the usual graph algebras because the range projections corresponding to different edges need not
commute. One motivation for
their introduction was to provide graph-algebraic models for the C*-algebras
$U^{\text{nc}}_{m,n}$ studied  by L. Brown \cite{Brown} and  McClanahan
 \cite{McCla1}, \cite{McCla2}, \cite{McCla3}. Another motivation
was to obtain graph C*-algebras whose structure of projections is as
general as possible.  The theory of \cite{AG2} was mainly developed for {\it finitely separated graphs}, which are those
separated graphs
$(E,C)$ such that all the sets $X\in C$ are finite.

\medskip

Recall that a set $S$ of partial isometries in a C*-algebra
$\mathcal A$ is said to be {\it tame} \cite[Proposition
5.4]{ExelPJM} if every element of $U=\langle S\cup S^*\rangle$, the
multiplicative semigroup generated by $S\cup S^*$, is a partial
isometry. As indicated above,
a main difficulty in working with $C^*(E,C)$ is that, in general, the generating set of partial
isometries of these algebras is not tame. This is not the case for
the usual graph algebras, where it can be easily shown that the
generating set of partial isometries is tame. In order to solve this problem,
we introduced in \cite{AE} the {\it tame graph
C*-algebra} $\mathcal O (E,C)$ of a separated graph. Roughly, this algebra is defined by imposing
to $C^*(E,C)$ the relations needed to transform  the canonical  generating set of partial
isometries into a tame set of partial isometries (see Section \ref{sect:prels} for the precise definitions).

\medskip

For a finite bipartite separated graph $(E,C)$, a dynamical interpretation of the C*-algebra $\mathcal O (E,C)$
was obtained in \cite{AE}, and using this, a useful representation of $\mathcal O (E,C)$ as a {\it partial crossed product}
of a commutative C*-algebra by a finitely generated free group was derived. This theory enabled the authors to solve (\cite[Section 7]{AE}) an open problem
on paradoxical decompositions in a topological setting, posed in \cite{KN} and \cite{RS}.
It is worth mentioning here that the restriction to bipartite graphs in this
theory is harmless, since by \cite[Proposition 9.1]{AE}, we can attach to every separated graph $(E,C)$ a bipartite separated graph $(\tilde{E},\tilde{C})$
in such a way that the respective (tame) graph C*-algebras are Morita-equivalent.

One of the main technical tools in \cite{AE} is the introduction, for each finite bipartite separated graph $(E,C)$,
of a sequence of finite bipartite separated graphs $\{ (E_n, C^n)\}$ such that
the graph C*-algebras $C^*(E_n,C^n)$ approximate the tame graph C*-algebra $\mathcal O (E, C)$, in the sense that
$\mathcal O (E, C) \cong \varinjlim _n C^*(E_n,C^n)$, see \cite[Section 5]{AE}.

The main purpose of this paper is to compute the K-theory of the tame graph C*-algebras of finitely separated graphs. Concretely, we show the following
result:

\begin{theorem}
\label{thm:BIGONE}
 Let $(E,C)$ be a finitely separated graph. Then
 \begin{enumerate}
  \item $K_0(\mathcal O (E,C)) \cong K_0(C^*(E,C))\bigoplus H \cong \coker (1_C-A_{(E,C)})\bigoplus H,$
  where $H$ is a torsion-free abelian group. The group $H$ is a free abelian group when $E$ is a finite graph.
  \item The canonical projection map $\pi \colon C^*(E,C)\to \mathcal O (E,C)$ induces an isomorphism
  $$K_1(\mathcal O (E,C))\cong K_1(C^*(E,C))\cong \ker (1_C-A_{(E,C)}).$$
 \end{enumerate}
 \end{theorem}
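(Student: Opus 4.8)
The plan is to reduce everything to the inductive limit description $\mathcal{O}(E,C)\cong\varinjlim_n C^*(E_n,C^n)$ and to analyze the resulting system on $K$-theory. First I would reduce to the case of a \emph{finite bipartite} separated graph. Using \cite[Proposition 9.1]{AE} I would replace $(E,C)$ by the associated bipartite graph $(\tilde E,\tilde C)$, checking that the Morita equivalence there is compatible with the surjections $\pi$ and with the maps $1_C-A_{(E,C)}$, so that both sides of the asserted isomorphisms transform correctly under it; an infinite graph $E$ is then handled by writing $(E,C)$ as a directed union of finite separated subgraphs and invoking the continuity of $K$-theory, of $\ker$ and $\coker$ of $1_C-A_{(E,C)}$, and of the whole construction. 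It therefore suffices to treat a finite bipartite $(E,C)$, for which the sequence $\{(E_n,C^n)\}$ of \cite[Section 5]{AE} is available, with $C^*(E,C)$ sitting as the first term and $\pi$ realized as the structure map into the limit.

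Since $K$-theory commutes with inductive limits, $K_i(\mathcal{O}(E,C))\cong\varinjlim_n K_i(C^*(E_n,C^n))$, and each term is computed by the kernel and cokernel of $1_{C^n}-A_{(E_n,C^n)}$. The crux is then to understand the connecting homomorphisms $K_i(C^*(E_n,C^n))\to K_i(C^*(E_{n+1},C^{n+1}))$. I would realize each transition $C^*(E_n,C^n)\to C^*(E_{n+1},C^{n+1})$ as the inclusion arising from an amalgamated free product and feed it into the associated six-term exact sequence, using the explicit description of its connecting map (one of the auxiliary results of the paper). For $K_1$, combining this sequence with the explicit isomorphism $\ker(1_{C^n}-A_{(E_n,C^n)})\cong K_1(C^*(E_n,C^n))$, I would show that the portions of the sequence surrounding $K_1$ collapse, so that the transition map on $K_1$ is an isomorphism at every stage, corresponding concretely to an isomorphism $\ker(1_{C^n}-A_{(E_n,C^n)})\cong\ker(1_{C^{n+1}}-A_{(E_{n+1},C^{n+1})})$. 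Hence the $K_1$-system is essentially constant, its limit is $K_1(C^*(E,C))\cong\ker(1_C-A_{(E,C)})$, and $K_1(\pi)$ is exactly this isomorphism.

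For $K_0$, the same six-term sequence should show that each connecting map $\coker(1_{C^n}-A_{(E_n,C^n)})\to\coker(1_{C^{n+1}}-A_{(E_{n+1},C^{n+1})})$ is a split monomorphism whose cokernel is a free abelian group, the new generators being read off from the data created in passing from $(E_n,C^n)$ to $(E_{n+1},C^{n+1})$. Taking the limit gives $K_0(\mathcal{O}(E,C))\cong\coker(1_C-A_{(E,C)})\oplus H$, with $H=\varinjlim_n$ of the accumulated cokernels and $K_0(\pi)$ the split inclusion of the first summand. Torsion-freeness of $H$ is then automatic, since a direct limit of free, hence torsion-free, groups is torsion-free; this already yields the general statement. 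When $E$ is finite every $E_n$ is finite, so the increments are finitely generated free groups, and to upgrade torsion-freeness to freeness I would check that the connecting maps carry the chosen bases into bases, so that $H$ is the free abelian group on the accumulated generators, proving the displayed freeness and simultaneously exhibiting the generators.

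The main obstacle I anticipate is the explicit analysis of the connecting map in the amalgamated free product six-term sequence, and extracting from it the two structural facts on which everything rests: that on $K_1$ the transition maps are isomorphisms, and that on $K_0$ they are split with free cokernel admitting an identifiable basis. Making the splitting natural in $n$ — so that the limit genuinely decomposes as a direct sum and the bases accumulate to a basis of $H$ — is the delicate point, and it is precisely what separates the clean freeness statement in the finite case from the merely torsion-free conclusion in general.
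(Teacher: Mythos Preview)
Your high-level strategy matches the paper: reduce to finite bipartite graphs via \cite[Proposition 9.1]{AE}, use $\mathcal O(E,C)\cong\varinjlim_n C^*(E_n,C^n)$, and analyze the transition maps on $K$-theory. But there is a genuine misconception in how you propose to carry this out.

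The transition maps $\phi_n\colon C^*(E_n,C^n)\to C^*(E_{n+1},C^{n+1})$ are \emph{surjective} quotients (by the ideal generated by commutators $[ee^*,ff^*]$), not inclusions, so they cannot be ``realized as the inclusion arising from an amalgamated free product.'' Thomsen's six-term sequence computes $K_*(A_1*_BA_2)$ from $K_*(A_i)$ and $K_*(B)$; it does not directly control a quotient map of the type $\phi_n$. For $K_0$ the paper avoids this entirely: it uses that $K_0(C^*(E_n,C^n))\cong G(M(E_n,C^n))$, and that the monoid map $\iota_n\colon M(E_n,C^n)\to M(E_{n+1},C^{n+1})$ (which \emph{does} correspond to $K_0(\phi_n)$) factors through a multiresolution inclusion. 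The cokernel is then read off from an elementary pushout in abelian groups (Lemma~\ref{lem:K0multires}), with the free basis coming from a direct combinatorial lemma (Lemma~\ref{lem:refin-exactsequence}); no six-term sequence is involved.

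For $K_1$ the amalgamated free product does appear, but not as you describe. The paper decomposes a \emph{single} algebra $C^*(E,C)=C^*(E',C')*_B C^*(E_{\{X\}},\{X\})$ over $B=C(E^0)$ by removing one $X\in C$, and uses the explicit index map computation (Section~\ref{sect:partialunitaries}) to prove, by induction on $|C|$, that a concrete map $\lambda_{(E,C)}\colon\ker(1_C-A_{(E,C)})\to K_1(C^*(E,C))$ is an isomorphism. Compatibility of these $\lambda$'s with $\phi_n$ is then checked by hand (Proposition~\ref{prop:commuting-diagram}), giving that each $K_1(\phi_n)$ is an isomorphism. So the amalgamated free product is a tool for understanding each $K_1(C^*(E_n,C^n))$ individually, not the transition between them. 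Your proposal, as written, would need to be substantially reworked along one of these two lines to go through.
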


The terms $\coker (1_C-A_{(E,C)})$ and $\ker (1_C-A_{(E,C)})$ appearing in the above theorem come from
  \cite[Theorem 5.2]{AG2},
  where the K-theory of the graph C*-algebras of finitely separated graphs was computed. The formulas there are
analogous to the ones previously known for non-separated graphs (see \cite[Theorem 3.2]{RaeSzy}). The matrix $A_{(E,C)}$ is
the incidence matrix of the separated graph, which encodes the number of edges between two vertices of $E$
  belonging
  to the different sets $X\in C$.  (See Section \ref{sect:compK1} for the precise definition of these
matrices).

We first study the case of finite bipartite separated graphs. Under this additional hypothesis, we obtain the result
for $K_0$ in Section \ref{sect:bipsepgraphs} (Theorem \ref{thm:K0C(E,C)})
and the result for $K_1$ in Section  \ref{sect:compK1} (Theorem \ref{thm:main-K1-first}).
The proof of Theorem \ref{thm:main-K1-first} involves a computation of the index map for certain amalgamated free products, which we develop in
Section \ref{sect:partialunitaries}. As a
    byproduct
  of our approach, we also develop a concrete description of the isomorphism between $\ker (1_C-A_{(E,C)})$ and
  $K_1(C^*(E,C))$,
which we believe is of independent interest. Such a description was obtained by Carlsen, Eilers and
Tomforde in \cite[Section 3]{CET} for relative graph algebras of non-separated graphs, by using different
techniques. Using these results and direct limit technology, we show Theorem \ref{thm:BIGONE} in Section
\ref{sect:reduction-lemmas}
 (see Theorems \ref{thm:mainK1fggrs} and \ref{thm:finalK0}).

\medskip

\noindent{\bf Contents.} We now explain in more detail the contents
of this paper. In Section 2 we recall the basic definitions needed
for our work, coming from the papers \cite{AG2}, \cite{AG} and \cite{AE}.
In Section 3, we recall the crucial concept
of a multiresolution of a separated graph $(E,C)$ at a set of
vertices of $E$, and we determine the precise relation between the correponding
graph C*-algebras (Lemma \ref{lem:K0multires}). This is a
vital step for our results on $K_0$. In Section \ref{sect:bipsepgraphs}, we show the
isomorphism $K_0(\mathcal O (E,C))\cong K_0(C^*(E,C))\oplus H$ for any finite bipartite separated graph $(E,C)$,
where $H$ is a free abelian group, generally of infinite rank. The generators of $H$ are
precisely determined in terms of the vertices of the graphs appearing in the canonical sequence
$\{ (E_n, C^n) \}$ of finite bipartite separated graphs
associated to $(E,C)$ (see Theorem \ref{thm:K0C(E,C)}).
Section \ref{sect:partialunitaries} contains the explicit calculation of the index map
$K_1(A_1*_BA_2)\to K_0(B)$ of \cite{Thomsen} for certain partial unitaries in the amalgamated
free product $A_1*_BA_2$, where $B$ is a finite-dimensional C*-algebra and $A_1$, $A_2$ are separable C*-algebras.
This result is used in Section \ref{sect:compK1}, where the isomorphism between
$K_1(\mathcal O (E,C))$ and $K_1(C^*(E,C))$ is obtained for any finite bipartite separated graph $(E,C)$.
We obtain indeed an enhanced version of this result (Theorem \ref{thm:main-K1-first}), which includes an
explicit isomorphism of the above mentioned groups with the group $\ker (1_C-A_{(E,C)})$.
We also show a corresponding result for the {\it reduced} tame graph C*-algebra $\mathcal O _{red} (E,C)$
(Corollary \ref{cor:reduced-version}).
Finally, we extend the above results to
    (not necessarily bipartite)
  finitely separated graphs in Section \ref{sect:reduction-lemmas}.
For this, we use the direct limit technology of \cite{AG} and \cite[Proposition 9.1]{AE}.
 The result for $K_1$ is easily derived using these techniques (Theorem \ref{thm:mainK1fggrs}).
 To obtain the result for $K_0$, we need to refine some of the already developed tools, in particular we
 make use of the concrete information about the generators of the cokernel of the map
 $K_0(\pi) \colon K_0(C^*(E,C))\to K_0(\mathcal O (E,C))$ induced by the canonical surjection
 $\pi \colon C^*(E,C)\to \mathcal O (E,C)$ for finite bipartite separated graphs,
 see Theorems \ref{thm:mainK0fggrs} and \ref{thm:finalK0}.

\section{Preliminary definitions}
\label{sect:prels}

The concept of separated graph, introduced in \cite{AG}, plays a
vital role in our construction. In this section, we will recall this
concept and we will also recall the definitions of the {\it monoid
associated to a separated graph}, the {\it Leavitt path algebra} and
the {\it graph C*-algebra} of a separated graph.

  Regarding the direction of arrows in graphs, we will use notation opposite that of \cite{AG} and \cite{AG2}, but in
agreement with the one used in \cite{AEK}, and in the book \cite{Raeburn}.

\begin{definition}{\rm (\cite{AG})} \label{defsepgraph}
A \emph{separated graph} is a pair $(E,C)$ where $E$ is a graph,
$C=\bigsqcup _{v\in E^ 0} C_v$, and $C_v$ is a partition of
$r^{-1}(v)$ (into pairwise disjoint nonempty subsets) for every
vertex $v$. (In case $v$ is a source, we take $C_v$ to be the empty
family of subsets of $r^{-1}(v)$.)

    If all sets
  in $C$ are finite, we say that $(E,C)$ is a
\emph{finitely separated} graph. This necessarily holds if $E$ is
column-finite (that is, if $r^{-1}(v)$ is a finite set for every
$v\in E^0$.)

The set $C$ is a \emph{trivial separation} of $E$ in case $C_v=
\{r^{-1}(v)\}$ for each $v\in E^0\setminus \So (E)$. In that case,
$(E,C)$ is called a \emph{trivially separated graph} or a
\emph{non-separated graph}.
\end{definition}

\begin{definition}\cite[Definition 1.4]{AG2}
\label{def:LPASG} {\rm The
{\it Leavitt path algebra of the separated graph} $(E,C)$ is the $*$-algebra $L_\C(E,C)$ with
generators $\{ v, e\mid v\in E^0, e\in E^1 \}$, subject to the
following relations:}
\begin{enumerate}
\item[] (V)\ \ $vv^{\prime} = \delta_{v,v^{\prime}}v$ \ and \ $v=v^*$ \ for all $v,v^{\prime} \in E^0$,
\item[] (E)\ \ $r(e)e=es(e)=e$ \ for all $e\in E^1$ ,
\item[] (SCK1)\ \ $e^*e'=\delta _{e,e'}s(e)$ \ for all $e,e'\in X$, $X\in C$, and
\item[] (SCK2)\ \ $v=\sum _{ e\in X }ee^*$ \ for every finite set $X\in C_v$, $v\in E^0$.
\end{enumerate}
\end{definition}

We now recall the definition of the graph C*-algebra $C^*(E,C)$,
introduced in \cite{AG2}.

\begin{definition}\cite[Definition 1.5]{AG2} The \emph{graph C*-algebra} of a separated graph $(E,C)$ is the
C*-algebra $C^*(E,C)$  with generators $\{ v, e \mid v\in E^0,\ e\in
E^1 \}$, subject to the relations (V), (E), (SCK1), (SCK2). In other
words, $C^*(E,C)$ is the enveloping C*-algebra of $L_\C(E,C)$.
\end{definition}

In case $(E,C)$ is trivially separated, $C^*(E,C)$ is just the
classical graph C*-algebra $C^*(E)$. There is a unique
*-homomorphism  $L_\C(E,C) \rightarrow
C^*(E,C)$ sending the generators of $L_\C(E,C)$ to their canonical
images in $C^*(E,C)$. This map is injective by \cite[Theorem
3.8(1)]{AG2}.

The C*-algebra $C^*(E,C)$ for separated graphs behaves in quite a
different way compared to the usual graph C*-algebras associated to non-separated graphs, the reason being that the final projections
of the partial isometries corresponding to edges coming from different sets in
$C_v$, for $v\in E^0$, need not commute. In order to resolve this problem, a different C*-algebra was considered in
\cite{AE}, as follows:

\begin{definition} \cite{AE}
 \label{def:O(E,C)} Let $(E,C)$ be any separated graph. Let $U$ be the multiplicative subsemigroup of $C^*(E,C)$ generated by $(E^1)\cup (E^1)^*$ and write $e(u)=uu^*$
 for $u\in U$.
 Then the {\it tame graph C*-algebra} of $(E,C)$ is the C*-algebra
 $$\mathcal O (E,C)= C^*(E,C)/ J \, ,$$
 where $J$ is the closed ideal of $C^*(E,C)$ generated by all the commutators $[e(u), e(u')]$, for $u,u'\in U$.
 \end{definition}

Observe that $J=0$ in the non-separated case, so we get that $\mathcal O (E)= C^*(E)$ is the usual graph C*-algebra in this case.

Recall that for a unital ring $R$, the monoid $\mon{R}$ is usually
defined as the set of isomorphism classes $[P]$ of finitely
generated projective (left, say) $R$-modules $P$, with an addition
operation given by $[P]+[Q]= [P\oplus Q]$. For a nonunital version,
see  \cite[Definition 10.8]{AG}.

For arbitrary rings, $\mon{R}$ can also be described in terms of
equivalence classes of idempotents from the ring $M_\infty(R)$ of
  all infinite  matrices over $R$ with finitely many nonzero
entries. The equivalence relation is \emph{Murray-von Neumann
equivalence}: idempotents $e,f\in M_\infty(R)$ satisfy $e\sim f$ if
and only if there exist $x,y\in M_\infty(R)$ such that $xy=e$ and
$yx=f$. Write $[e]$ for the equivalence class of $e$; then $\mon{R}$
can be identified with the set of these classes. Addition in
$\mon{R}$ is given by the rule $[e]+[f]= [e\oplus f]$, where
$e\oplus f$ denotes the block diagonal matrix $\left(
\begin{smallmatrix} e&0\\ 0&f \end{smallmatrix} \right)$. With this
operation, $\mon{R}$ is a commutative monoid, and it is \emph{conical},
meaning that $a+b=0$ in $\mon{R}$ only when $a=b=0$. Whenever $A$ is
a C*-algebra, the monoid $\mon{A}$ agrees with the monoid of
equivalence classes of projections  in $M_{\infty}(A)$ with respect
to the equivalence relation given by $e\sim f $ if and only if there
is a partial isometry $w$ in $M_{\infty}(A)$ such that $e=ww^*$ and
$f=w^*w$; see \cite[4.6.2 and 4.6.4]{Black} or \cite[Exercise
3.11]{rordam}.

We will need the definition of $M(E,C)$ only for finitely separated
graphs. The reader can consult \cite{AG} for the definition in the
general case. Let $(E,C)$ be a finitely separated graph, and let
$M(E,C)$ be the commutative monoid given by generators $a_v$, $v\in
E^0$, and relations $a_v=\sum _{e\in X} a_{s(e)}$, for $X\in C_v$,
$v\in E^0$. Then there is a canonical monoid homomorphism $M(E,C)\to
\mon{L_\C(E,C)}$, which is shown to be an isomorphism in
\cite[Theorem 4.3]{AG}. The map $\mon{L_{\C}(E,C)}\to
\mon{C^*(E,C)}$ induced by the natural $*$-homomorphism $L_\C(E,C)\to
C^*(E,C)$ is conjectured to be an isomorphism for all finitely
separated graphs $(E,C)$ (see \cite{AG2} and \cite[Section
6]{Aone-rel}).

\section{Multiresolutions}
\label{sect:multires}

In this section, we will recall from \cite{AE} the concept of mutiresolution of a
finitely separated graph $(E,C)$, which is
closely related to the notion of resolution, studied in \cite{AG}. We will also establish the precise relation between
the corresponding Grothendieck groups.

\begin{definition}{\rm (\cite{AE})}
\label{multiresatv}
  Let $(E,C)$ be a finitely separated graph, and let $v$ be any given vertex.
Let $C_v=\{ X_1,\dots ,X_k\}$ with each $X_i$ a
finite subset of $r^{-1}(v)$. Put $M=\prod _{i=1}^k |X_i|$. Then the
multiresolution of $(E,C)$ at $v$ is the separated graph $(E_v,
C^v)$ with
$$E_v^0=E^0\sqcup \{v(x_1,\dots ,x_k)\mid x_i\in X_i, i=1,\dots ,k \},$$
and with $E_v^1=E^1\sqcup \Lambda$, where $\Lambda $ is a new set of
arrows defined as follows. For each $x_i\in X_i$, we put $M/|X_i|$
new arrows
  $\alpha ^{x_i}(x_1,\dots ,x_{i-1},x_{i+1},\dots ,x_k )$,
$x_j\in X_j$, $j\ne i$, with
$$r(\alpha ^{x_i}(x_1,\dots ,x_{i-1},x_{i+1},\dots ,x_k ))=s(x_i), \text{ and }
s(\alpha ^{x_i}(x_1,\dots ,x_{i-1},x_{i+1},\dots ,x_k ))=v(x_1,\dots
,x_k).$$ For a vertex $w\in E^0$, define the new groups at $w$ as
follows. These groups are indexed by the edges $x_i\in X_i$,
$i=1,\dots ,k$, such that $s(x_i)=w$. For
    each such $x_i$, set
$$X(x_i) =\{\alpha ^{x_i} (x_1,\dots ,x_{i-1},x_{i+1},\dots , x_k)\mid x_j\in X_j, j\ne i\}.$$
Then $$(C^v)_w=C_w\sqcup \{X(x_i)\mid x_i\in X_i, s(x_i)=w,
i=1,\dots , k\}.$$ The new vertices $v(x_1,\dots , x_k)$ are sources
in $E_v$.
\end{definition}

\begin{definition} {\rm (\cite{AE})}
\label{multiresatsetofvs} Let $V\subseteq E^0$ be a set of vertices
such that, for each $u\in V$,  $C_u=\{X_1^u, \dots ,X_{k_u}^u\}$,
with each $X^u_i$ a finite subset of $r^{-1}(u)$. Then the {\it
multiresolution of $(E,C)$ at $V$} is the separated graph $(E_V, C^V)$
obtained by applying the above process to all vertices $u$ in $V$.

Hence
$$E_V^0=E^0\sqcup \Big( \bigsqcup _{u\in V} \{v(x^u_1,\dots ,x^u_{k_u})\mid x^u_i\in X^u_i, i=1,\dots ,k_u \} \Big),$$
and $E_V^1=E^1\sqcup \Big( \bigsqcup _{u\in V} \Lambda_u\Big)$,
where $\Lambda _u$ is the corresponding set of arrows, defined as in Definition \ref{multiresatv}, for each $u\in V$.
The sets $(C^V)_w$, for $w\in E_V^0$, are defined just as in Definition
\ref{multiresatv}:
$$(C^V)_w=C_w\sqcup \{X(x^u_i)\mid x^u_i\in X^u_i, s(x^u_i)=w,
i=1,\dots , k_u, u\in V\}.$$ The new vertices $v(x^u_1,\dots ,
x^u_{k_u})$ are sources in $E_V$.
\end{definition}

We will only need to consider multiresolutions at sets of vertices $V$ such that there are no edges between them. Observe that this implies that $r_E(v)= r_{E_V}(v)$
for all $v\in V$.

The notation used in the next
  lemma
  will become clear when we prove
Lemma \ref{lem:K0multires}.

\begin{lemma}
\label{lem:refin-exactsequence} Let $X_1,\dots ,X_k$ be $k$ finite
sets, with $X_i= \{ x^{(i)}_t\}_{t=1,\dots , |X_i|}$ and let $G(M)$
be the abelian group generated by the $|X_1|+\cdots +|X_k|$ elements
$$\{ b(x^{(i)}_t)\mid t=1,\dots , |X_i|, i=1,\dots ,k \}$$
subject to the relations $\sum _{t=1}^{|X_i|}  b(x^{(i)}_t) -
\sum_{s=1}^{|X_j|} b(x^{(j)}_s)=0$, for $1\le i<j\le k$. Let $G(F)$
be the free abelian group on the $|X_1|\cdot |X_2|\cdots |X_k|$
elements $a(x^{(1)}_{t_1},x^{(2)}_{t_2},\dots , x^{(k)}_{t_k})$, for
$t_i\in \{ 1,\dots ,|X_i| \}$, $i\in \{ 1,\dots ,k \}$. Let $G(\psi
)\colon G(M)\to G(F)$ be the group homomorphism given by
\begin{equation}
\label{eq:Gofpsi}
G(\psi)(b(x^{(i)}_{t_i}))= \sum _{j\ne i} \sum_{t_j=1}^{|X_j|} a(x^{(1)}_{t_1},\dots ,x^{(i-1)}_{t_{i-1}}, x^{(i)}_{t_i},x^{(i+1)}_{t_{i+1}},\dots ,x^{(k)}_{t_k})
\end{equation}
 for $1\le t_i\le |X_i|$,
$1\le i\le k$.  Then $G(\psi)$ is injective, and
$G(F)/G(\psi)(G(M))$ is a free abelian group of rank $|X_1|\cdots
|X_k|-|X_1|-\cdots -|X_k|+k-1$, freely generated by the images in
$G(F)/G(\psi)(G(M))$ of the elements of the form
$a(x^{(1)}_{t_1},x^{(2)}_{t_2},\dots , x^{(k)}_{t_k})$ such that
$t_i>1$ and $t_j>1$ for at least two distinct indices $i,j\in \{
1,\dots ,k\}$.
\end{lemma}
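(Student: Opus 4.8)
The plan is to make visible a tensor-product structure that trivializes the whole computation. Write $n_i=|X_i|$ and identify $G(F)$ with $\bigotimes_{i=1}^{k}\mathbb Z^{n_i}$ by sending $a(x^{(1)}_{t_1},\dots,x^{(k)}_{t_k})$ to $e^{(1)}_{t_1}\otimes\cdots\otimes e^{(k)}_{t_k}$, where $e^{(i)}_1,\dots,e^{(i)}_{n_i}$ is the standard basis of $\mathbb Z^{n_i}$. Setting $\mathbf 1_i:=\sum_{s=1}^{n_i}e^{(i)}_s$, the defining formula \eqref{eq:Gofpsi} then becomes the clean statement
$$G(\psi)\bigl(b(x^{(i)}_{t_i})\bigr)=\mathbf 1_1\otimes\cdots\otimes\mathbf 1_{i-1}\otimes e^{(i)}_{t_i}\otimes\mathbf 1_{i+1}\otimes\cdots\otimes\mathbf 1_k ,$$
the pure tensor that is ``all ones'' in every factor except the $i$-th, where it carries the single basis vector $e^{(i)}_{t_i}$. (Summing this over $t_i$ yields $\mathbf 1_1\otimes\cdots\otimes\mathbf 1_k$ independently of $i$, which is the tensor-level shadow of the relations $\sum_t b(x^{(i)}_t)=\sum_s b(x^{(j)}_s)$ defining $G(M)$.)

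The main step is a unimodular change of basis carried out in each factor. In $\mathbb Z^{n_i}$ I would replace $e^{(i)}_1$ by $f^{(i)}_1:=\mathbf 1_i$ and keep $f^{(i)}_t:=e^{(i)}_t$ for $t\ge 2$; this is an integral change of basis, with inverse $e^{(i)}_1=f^{(i)}_1-\sum_{t\ge2}f^{(i)}_t$. The resulting product basis $\tilde a(\vec t):=f^{(1)}_{t_1}\otimes\cdots\otimes f^{(k)}_{t_k}$ of $G(F)$ is indexed by tuples $\vec t=(t_1,\dots,t_k)$, which I sort into $B_0$ (all $t_i=1$), $B_1$ (exactly one $t_i>1$), and $B_2$ (at least two $t_i>1$). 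Rewriting the generators of the image in this basis: for $t_i\ge2$ one gets $G(\psi)(b(x^{(i)}_{t_i}))=\tilde a(1,\dots,1,t_i,1,\dots,1)$, the single $B_1$-vector with $t_i$ in slot $i$; while for $t_i=1$ it expands as $\tilde a(1,\dots,1)-\sum_{t\ge2}\tilde a(1,\dots,t,\dots,1)$, a combination of the $B_0$-vector and $B_1$-vectors. Hence $G(\psi)(G(M))$ contains every $B_1$-vector, and then the $B_0$-vector as well, so it coincides exactly with the free direct summand $\langle\, \tilde a(\vec t):\vec t\in B_0\cup B_1\,\rangle$ of $G(F)$.

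The conclusions then follow formally. Since $\{\tilde a(\vec t)\}$ is a basis of $G(F)$ and $B_0\cup B_1\cup B_2$ partitions the index tuples, the cokernel $G(F)/G(\psi)(G(M))$ is free abelian with basis the images of $\{\tilde a(\vec t):\vec t\in B_2\}$, and counting tuples with at least two coordinates $>1$ gives the rank $n_1\cdots n_k-\sum_i n_i+k-1$. For injectivity I would note that $G(M)$ is free abelian of rank $\sum_i n_i-k+1$ (using the consecutive relations $\sum_t b(x^{(i)}_t)=\sum_s b(x^{(i+1)}_s)$ to eliminate $b(x^{(i+1)}_1)$ for $i=1,\dots,k-1$), and that $|B_0|+|B_1|=\sum_i n_i-k+1$ equals the rank of the image; a surjection of free abelian groups of equal finite rank is an isomorphism, so $G(\psi)$ is injective.

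It remains to translate the description of the cokernel from the auxiliary basis $\tilde a$ back to the original generators $a$, and this is the only step requiring genuine care. For $\vec t\in B_2$, expanding the ``all ones'' factors gives $\tilde a(\vec t)=a(\vec t)+\sum_{\vec s}a(\vec s)$, where each occurring $\vec s$ has support (set of coordinates $>1$) \emph{strictly} containing that of $\vec t$, and therefore also lies in $B_2$. Ordering $B_2$ by size of support, this transition is unitriangular, hence unimodular on $\mathbb Z^{B_2}$; consequently the images of $\{a(\vec t):\vec t\in B_2\}$ form a free basis of the cokernel just as the $\tilde a(\vec t)$ do. Verifying that these support-increasing terms never leave $B_2$ is precisely what certifies that the claimed generators — the $a(\vec t)$ with at least two $t_i>1$ — are the right ones, so I expect this bookkeeping to be the delicate point rather than any deep difficulty.
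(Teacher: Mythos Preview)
Your argument is correct. The tensor identification $G(F)\cong\bigotimes_i\mathbb Z^{n_i}$, the factor-wise unimodular change $e^{(i)}_1\rightsquigarrow\mathbf 1_i$, the identification of the image with $\langle\tilde a(\vec t):\vec t\in B_0\cup B_1\rangle$, the rank count for injectivity, and the final unitriangular passage from $\tilde a$ back to $a$ all check out. (Your last paragraph is accurate: for $\vec t\in B_2$ with support $S$, the expansion of $\tilde a(\vec t)$ involves only those $a(\vec s)$ with $s_i=t_i$ for $i\in S$, hence $\mathrm{supp}(\vec s)\supseteq S$ and so $\vec s\in B_2$; the term $\vec s=\vec t$ gives the diagonal $1$.)

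The paper's proof reaches the same conclusion by a different mechanism: it works entirely in the original basis of $G(F)$ and performs iterated Steinitz exchanges, first swapping $a(x^{(1)}_1,\dots,x^{(k)}_1)$ for $G(\psi)(b(x^{(1)}_1))$, then observing that each $G(\psi)(b(x^{(i)}_{t_i}))$ with $t_i\ge 2$ is congruent to the single basis vector $a(1,\dots,t_i,\dots,1)$ modulo the ``at least two indices $>1$'' set $\mathcal B'$, and swapping those in too. Your change of basis is precisely what turns these congruences into equalities: in your $\tilde a$-basis the images are literally single coordinate vectors, so no exchange lemma is needed and the image is visibly a coordinate summand. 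The trade-off is that you then need the unitriangular step to get back to the $a$-generators, whereas the paper never leaves them. Both routes are elementary; yours packages the combinatorics more structurally via the tensor decomposition, while the paper's is a more hands-on basis manipulation.
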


\begin{proof}
Observe that $G(\psi)$ is a well-defined homomorphism, since
$G(\psi)$ sends $\sum _{t=1}^{|X_i|}  b(x^{(i)}_t) -
\sum_{s=1}^{|X_j|} b(x^{(j)}_s)$ to $0$ for all $i,j$.

It is easy to check that
$$\mathfrak{B} = \{ b(x^{(1)}_{t_1}) \mid 1\le t_1\le |X_1| \}\cup \{
b(x^{(i)}_{t_i})\mid 2\le t_i\le |X_i|, \,\, 2\le i\le k \}$$ is a family
of generators for $G(M)$, with $|X_1|+\cdots +|X_k|-k+1$ elements.

Write
$$B^{(1)}_1= G(\psi) (b(x^{(1)}_1)),\qquad B^{(i)}_{t_i}= G(\psi)
(b(x^{(i)}_{t_i})),\quad \, 2\le t_i \le |X_i|,\,  1\le i \le k \,
.$$ Let $\mathcal B$ be the canonical basis of $G(F)$, and let
$\mathcal B '$ be the subset of elements of $\mathcal B$ which are
of the form $a(x^{(1)}_{t_1},x^{(2)}_{t_2},\dots , x^{(k)}_{t_k})$
with $t_i>1$ and $t_j>1$ for at least two distinct indices $i,j\in
\{ 1,\dots ,k\}$. By using the integer version of Steinitz's Lemma,
we see that $$\{B^{(1)}_1\}\cup \Big(\mathcal B \setminus
\{a(x^{(1)}_1,x^{(2)}_1, \dots , x^{(k)}_1)\}\Big) $$ is a basis for
$G(F)$.

Now observe that for all $i\in \{ 1,\dots , k\}$ and $t_i\in \{
2,\dots ,|X_i| \}$, we have
$$ B^{(i)}_{t_i}\in a(x^{(1)}_1,\dots x^{(i-1)}_{1},
x^{(i)}_{t_i},x^{(i+1)}_1,\dots , x^{(k)}_1) + \langle \mathcal B '
\rangle .$$ Hence, the integer version of Steinitz's Lemma gives
immediately that
$$ \{ B^{(1)}_1 \}\cup \{ B^{(i)}_{t_i} \mid 2\le t_i \le |X_i|, 1\le
i\le k  \}\cup \mathcal B '$$ is a basis of $G(F)$. This shows in
particular that $G(\psi)$ is injective and that the above generating
family $\mathfrak B$ is a basis for $G(M)$. It also shows that
$\mathcal B '$ is a free basis for $G(F)/G(\psi)(G(M))$.
\end{proof}

We will use the following lemma
   to compute $K_0(\mathcal O (E,C))$.

\begin{lemma} \label{lem:K0multires}
Let $(E,C)$ be a separated graph and let $V\subseteq E^0$ be a finite set
of vertices such that $|r^{-1}(u)|<\infty$ for all $u\in V$. Suppose that $s(r^{-1}(V))\cap V=\emptyset$,
that is, that there are no edges between
elements of $V$.
For
$u\in V$, set $C_u= \{X^u_1,\dots X^u_{k_u} \}$. Let $\iota: (E,C)
\rightarrow (E_V,C^V)$ denote the inclusion morphism, where $(E_V,
C^V)$ is the multiresolution of $(E,C)$ at $V$. Then
$$K_0(C^*(E_V,C^V))\cong K_0(C^*(E,C)) \oplus \mathbb Z^W$$
where $W$ is the set of all vertices $v(x_{t_1}^{(1)},\dots , x_{t_{k_u}}^{(k_u)})$,
where $u\in V$, $x^{(i)}_{t_i}\in X_i^{u}$ for all $i$, and $t_i>1$ ,  $t_j>1$
for at least two different indices $i$ and $j$.
We have
$$|W| = \sum _{u\in V} \Big(\prod _{i=1}^{k_u} |X^u_i|- \sum
_{i=1}^{k_u} |X^u_i| +k_u-1 \Big).$$
\end{lemma}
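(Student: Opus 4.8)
The plan is to reduce everything to the linear algebra already packaged in Lemma \ref{lem:refin-exactsequence}. By \cite[Theorem 5.2]{AG2} I may replace both $K_0$-groups by cokernels: $K_0(C^*(E,C))\cong\coker(1_C-A_{(E,C)})$, where $1_C-A_{(E,C)}\colon\mathbb{Z}^{(C)}\to\mathbb{Z}^{(E^0)}$ sends the generator $e_X$ attached to $X\in C_w$ to $w-\sum_{e\in X}s(e)$, and likewise $K_0(C^*(E_V,C^V))\cong\coker(1_{C^V}-A_{(E_V,C^V)})$. Writing $R_{\mathrm{old}}\subseteq\mathbb{Z}^{(E^0)}$ for the image of $1_C-A_{(E,C)}$, I would first record the splittings $\mathbb{Z}^{(E_V^0)}=\mathbb{Z}^{(E^0)}\oplus G(F)$ and $\mathbb{Z}^{(C^V)}=\mathbb{Z}^{(C)}\oplus\mathbb{Z}^{(\text{new groups})}$, where $G(F)=\bigoplus_{u\in V}G(F_u)$ is free on the new source vertices $u(x_1,\dots,x_{k_u})$ and the new groups are the $X(x^u_i)$ of Definition \ref{multiresatv}. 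Because no edge joins two vertices of $V$, the hypothesis $s(r^{-1}(V))\cap V=\emptyset$ guarantees that each new group $X(x^u_i)$ sits at an old vertex $s(x^u_i)\notin V$, so the contributions of distinct $u\in V$ are independent and it suffices to identify the map on each summand.

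Next I would compute the two kinds of columns of $1_{C^V}-A_{(E_V,C^V)}$. The old columns $e_X$ are unchanged (range and sources are old vertices), so they contribute exactly $R_{\mathrm{old}}$. Each new column $e_{X(x^u_i)}$ has range $s(x^u_i)$ and sources the new vertices $u(x_1,\dots,x_{k_u})$ with $i$-th entry fixed at $x^u_i$; under the identification of the new source vertex $u(x_1,\dots,x_{k_u})$ with the generator $a(x_1,\dots,x_{k_u})$ of $G(F_u)$, and of the edge $x^u_i$ with $b(x^u_i)$, this column equals $s(x^u_i)-G(\psi_u)(b(x^u_i))$ by the very formula \eqref{eq:Gofpsi}. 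Hence $K_0(C^*(E_V,C^V))=(\mathbb{Z}^{(E^0)}\oplus G(F))/(R_{\mathrm{old}}+R_{\mathrm{new}})$, where $R_{\mathrm{new}}$ is generated by the elements $s(x^u_i)-G(\psi_u)(b(x^u_i))$, and $\iota_*$ is the map induced by the inclusion $\mathbb{Z}^{(E^0)}\hookrightarrow\mathbb{Z}^{(E^0)}\oplus G(F)$ (indeed $\iota$ sends $p_v\mapsto p_v$, which is $[v]\mapsto[v]$ on cokernels).

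The heart of the matter is then two short computations. First, quotienting by the image of $\iota_*$ absorbs the vertex terms $s(x^u_i)$, giving $\mathbb{Z}^{(E^0)}+R_{\mathrm{new}}=\mathbb{Z}^{(E^0)}\oplus G(\psi)(G(M))$ with $G(\psi)=\bigoplus_u G(\psi_u)$ and $G(M)=\bigoplus_u G(M_u)$; therefore $K_0(C^*(E_V,C^V))/\iota_*K_0(C^*(E,C))\cong G(F)/G(\psi)(G(M))$, which by Lemma \ref{lem:refin-exactsequence} is free abelian, freely generated by the images of the new vertices indexed by $W$, of the stated rank $|W|=\sum_{u\in V}\bigl(\prod_i|X^u_i|-\sum_i|X^u_i|+k_u-1\bigr)$. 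As this quotient is free, the extension splits. Second, to see that $\iota_*$ is injective (so its image is a genuine copy of $K_0(C^*(E,C))$), I would take $x\in\mathbb{Z}^{(E^0)}$ lying in $R_{\mathrm{old}}+R_{\mathrm{new}}$; comparing $G(F)$-components forces $G(\psi)\bigl(\sum n\,b(x^u_i)\bigr)=0$, whence $\sum n\,b(x^u_i)=0$ in $G(M)$ by injectivity of $G(\psi)$, and then the well-defined homomorphism $G(M)\to\coker(1_C-A_{(E,C)})$, $b(x^u_i)\mapsto[s(x^u_i)]$ (well-defined precisely because the column-sum relations of $G(M_u)$ map to the old relations $a_u=\sum_{e\in X^u_i}a_{s(e)}$) shows that the $\mathbb{Z}^{(E^0)}$-part of the chosen preimage already lies in $R_{\mathrm{old}}$, so $[x]=0$.

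I expect the main obstacle to be bookkeeping rather than conceptual: correctly matching the combinatorics of the new groups $X(x^u_i)$ and new source vertices with the generators $b(x^u_i)$, $a(x_1,\dots,x_{k_u})$ and the map $G(\psi_u)$ of Lemma \ref{lem:refin-exactsequence}, and checking that the no-edges hypothesis genuinely makes the $u$-summands orthogonal, so that $G(F)$, $G(M)$, $G(\psi)$ and the set $W$ all decompose as direct sums (respectively a disjoint union) over $u\in V$. Once this is in place, the two structural points — that freeness of the quotient splits the sequence, and that injectivity of $\iota_*$ follows from injectivity of $G(\psi)$ together with the auxiliary map $G(M)\to K_0(C^*(E,C))$ — are immediate.
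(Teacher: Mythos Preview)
Your argument is correct and complete, but it follows a different route from the paper's proof. The paper first reduces to the case $|V|=1$ (using the no-edges hypothesis to ensure that the multiresolution at $V$ is obtained by successively performing single-vertex multiresolutions without introducing new arrows into $r^{-1}(V)$), and then invokes a pushout diagram of commutative monoids from \cite[Lemma 8.6]{AG}: the square with corners $M$, $F$, $M(E,C)$, $M(E_V,C^V)$ is a monoid pushout, hence so is its image under the Grothendieck functor $G(-)$, and Lemma \ref{lem:refin-exactsequence} then lets one read off the pushout explicitly as $K_0(C^*(E,C))\oplus H$. By contrast, you bypass the pushout machinery entirely and work directly with the cokernel presentation $K_0\cong\coker(1_C-A_{(E,C)})$, splitting the matrix $1_{C^V}-A_{(E_V,C^V)}$ into old and new blocks and arguing with the resulting short exact sequence. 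Your approach is more elementary (no monoid pushouts, no appeal to \cite[Lemma 8.6]{AG}) and handles all of $V$ at once; the paper's approach is more conceptual and makes the role of the map $G(\eta)\colon G(M)\to K_0(C^*(E,C))$ appear naturally as part of the pushout data rather than as an auxiliary device introduced to verify injectivity of $\iota_*$. Both proofs ultimately hinge on the same linear-algebraic content of Lemma \ref{lem:refin-exactsequence}.
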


\begin{proof} For a commutative monoid $M$, we denote by $G(M)$ the
universal group of $M$. Given a monoid homomorphism $f\colon M_1\to
M_2$, there is an associated group homomorphism \linebreak$G(f)
\colon G(M_1)\to G(M_2)$. These assignments define a functor $G$
from the category of commutative monoids to the category of abelian
groups.

Note that \cite[Theorem 5.2]{AG2} implies that, for every finitely
separated graph $(E,C)$, the group $K_0(C^*(E,C))$ is isomorphic to
the universal group of $M(E,C)$. More precisely, we have that the
natural map $M(E,C)\to \mon{C^*(E,C)}$ induces a group isomorphism
$G(M(E,C))\cong G(\mon{C^*(E,C)}=K_0(C^*(E,C))$.

Set $\mu= M(\iota)$, where $M(\iota )\colon M(E,C)\to M(E_V,C^V)$ is
the natural map (see \cite{AG}). Note that, since $s(r^{-1}(V))\cap V=\emptyset$, $(E_V,C^V)$ can be
obtained as
the last term of a finite sequence of separated graphs, each one
obtained from the previous one by performing the multiresolution
process with respect to a single vertex, with no new arrows in
$r^{-1}(V)$ for all the graphs of the sequence.
We may thus suppose that
$V=\{v\}$ for a single vertex $v$ in $E^0$.

Set $C_v=\{ X_1,\dots ,X_k\}$, and write $X_i= \{
x^{(i)}_t\}_{t=1,\dots , |X_i|}$. Let $F$ be the free commutative monoid
on generators  $a(x^{(1)}_{t_1},x^{(2)}_{t_2},\dots ,
x^{(k)}_{t_k})$, for $t_i\in \{ 1,\dots ,|X_i| \}$, $i\in \{ 1,\dots
,k \}$. Let $M$ be the commutative monoid given by generators
$$\{ b(x^{(i)}_t)\mid t=1,\dots , |X_i|,\,\, i=1,\dots ,k \}$$
subject to the relations $\sum _{t=1}^{|X_i|}  b(x^{(i)}_t) =
\sum_{s=1}^{|X_j|} b(x^{(j)}_s)$, for $1\le i<j\le k$.

There is a unique monoid homomorphism $\eta: M\rightarrow M(E,C)$ sending
$b(x^{(i)}_t)$ to $[s(x^{(i)}_t)]$ for $1\le t\le |X_i|$, and there
is a unique homomorphism $\eta': F\rightarrow M(E_V,C^V)$ sending
$a(x^{(1)}_{t_1},\dots ,x^{(k)}_{t_k}) \mapsto
[v(x^{(1)}_{t_1},\dots ,x^{(k)}_{t_k})]$ for $1\le t_i\le |X_i|$,
$1\le i\le k$. There is  a commutative diagram as follows:
\begin{equation} \label{monres}
\xymatrixrowsep{3pc}\xymatrixcolsep{5pc} \xymatrix{
M \ar[r]^{\psi} \ar[d]_{\eta} & F \ar[d]^{\eta'} \\
M(E,C) \ar[r]^{\mu} &M(E_V,C^V) }
\end{equation}
where $\psi $ is given by the formula (\ref{eq:Gofpsi}) on the generators $b(x_{t_i}^{(i)})$
of $M$.
As noted in the proof of \cite[3.8]{AE}, an easy adaptation of the
proof of \cite[Lemma 8.6]{AG} gives that (\ref{monres}) is a pushout
in the category of commutative monoids. It is a simple matter to check
that the functor $G(-)$ transforms a pushout diagram in the category
of commutative monoids to a pushout diagram in the category of abelian
groups. Since $G(M(E,C))\cong K_0(C^*(E,C))$ and $G(M(E_V,
C^V))\cong K_0(C^*(E_V, C^V))$, we get a pushout diagram
\begin{equation} \label{groupres}
\xymatrixrowsep{3pc}\xymatrixcolsep{5pc} \xymatrix{
G(M) \ar[r]^{G(\psi)} \ar[d]_{G(\eta)} & G(F) \ar[d]^{G(\eta')} \\
K_0(C^*(E,C)) \ar[r]^{G(\mu)} & K_0(C^*(E_V,C^V))}
\end{equation}
By Lemma \ref{lem:refin-exactsequence}, the map $G(\psi)$ is
injective and we can write
$$G(F)=G(\psi)(G(M))\oplus H\, ,$$
where $H$ is a free abelian group of rank $\prod_{i=1}^k |X_i| -\sum
_{i=1}^k |X_i|+k-1$. It follows easily from the usual description of
pushouts in the category of abelian groups that
$$K_0(C^*(E_V, C^V)) \cong K_0(C^*(E,C)) \oplus H\, .$$
Indeed, we have that the mentioned pushout is computed as the
quotient group
$$(K_0(C^*(E,C))\oplus G(F))/T\, ,$$
where $T$ is the subgroup given be the elements of the form
$(G(\eta)(x), - G(\psi)(x))$, for $x\in G(M)$. It is quite easy to
check that $K_0(C^*(E,C))\oplus G(F) = (K_0(C^*(E,C))\oplus H)
\oplus T$, from which the result follows.
\end{proof}

\medskip

\begin{remark}
\label{rem:dual1} We may explicitly describe the Pontrjagin dual of
$K_0(C^*(E_V,C^V))$ using Lemma \ref{lem:K0multires}.

For any separated graph $(E,C)$, the Pontrjagin dual of the group
$K_0(C^*(E,C))$ can be thought of as the set of functions $\lambda
\colon E^0\to \mathbb T$ which are
invariant by the relations,
that is for every vertex $v\in E^0$ and every $X\in C_v$ we must
have
$$ \lambda (v)= \prod_{x\in X} \lambda (s(x))\, . $$
We get from Lemma \ref{lem:K0multires} that
$$\widehat{K_0(C^*(E_V,C^V))}\cong \widehat{K_0(C^*(E,C))} \oplus \mathbb T^W ,$$
that is, the character $\lambda \in \widehat{K_0(C^*(E_V,C^V))}$ is
determined by its values on the vertices of $E^0$ and on the
vertices $v(x^{(1)}_{t_1}(u),\dots ,x^{(k_u)}_{t_{k_u}}(u))$, for
$u\in V$, where $C_u=\{X^u_1,\dots ,X^u_{k_u}\}$,
$x^{(i)}_{t_i}(u)\in X^u_i$, and $t_i>1$, $t_j>1$  for at least two different $i$ and $j$.
We now indicate
how to determine the values of $\lambda$ at the remaining vertices
of $E_V$. Fix a vertex $u$ in $V$. To simplify notation, we will
suppress the dependence on $u$ in the notation. The elements of
$C_u$ will be denoted by $X_1,\dots ,X_k$. For each index $i$ and
every $t_i>1$, we have \begin{align*} \lambda (v(x^{(1)}_1, & \dots
, x^{(i-1)}_1,x^{(i)}_{t_i},x^{(i+1)}_1,\dots ,x^{(k)}_1)= \lambda
(s(x^{(i)}_{t_i}))\cdot \\
 & \Big[ \prod_{(s_1,\dots ,s_{i-1},s_{i+1},\dots ,s_k)\ne (1,1,\dots
,1)} \lambda (v(x^{(1)}_{s_1},\dots
,x^{(i-1)}_{s_{i-1}},x^{(i)}_{t_i},x^{(i+1)}_{s_{i+1}},\dots
x^{(k)}_{s_k}))\Big]^{-1}
\end{align*}
So all the values are determined except for $\lambda
(v(x^{(1)}_1,\dots , x^{(k)}_1))$. Since $[u]= \sum _{t_1,\dots
,t_k}[v(x^{(1)}_{t_1},\dots , x^{(k)}_{t_k})]$ in $K_0(C^*(E_V,
C^V))$, we must have
$$\lambda
(v(x^{(1)}_1,\dots , x^{(k)}_1))= \lambda (u)\cdot
\Big[\prod_{(t_1,\dots ,t_k)\ne (1,1,\dots ,1)}\lambda
(v(x^{(1)}_{t_1},\dots , x^{(k)}_{t_k})) \Big]^{-1}.$$
  This is how all of the values of the character
   $\lambda$ are determined from the given values.  \end{remark}

\section{$K_0$ for the tame C*-algebra of a finite bipartite separated graph}
\label{sect:bipsepgraphs}

In this section, we will obtain a description of $K_0(\mathcal O (E,C))$ for any finite bipartite
separated graph $(E,C)$. This will be used in Section \ref{sect:reduction-lemmas} to get a formula
for general finitely separated graphs.

We first recall some basic terminology and our graph construction
from \cite{AE}.

\begin{definition} (\cite{AE})
\label{def:bipartitesepgraph} Let $E$ be a directed graph. We say
that $E$ is a {\it bipartite directed graph} if $E^0= E^{0,0} \sqcup
E^{0,1}$, with all arrows in $E^1$ going from a vertex in $E^{0,1}$
to a vertex in $E^{0,0}$. To avoid trivial cases, we will always
assume that $r^{-1}(v)\ne \emptyset$ for all $v\in E^{0,0}$ and
$s^{-1}(v)\ne \emptyset $ for all $v\in E^{0,1}$.

A {\it bipartite separated graph} is a separated graph $(E,C)$ such
that the underlying directed graph $E$ is a bipartite directed
graph.
\end{definition}

\begin{construction} (\cite{AE})
\label{cons:complete-multiresolution} (a) Let $(E, C)$ be a finite
bipartite separated graph. We define a nested sequence of finite
separated graphs $(F_n, D^n)$ as follows. Set $(F_0,D^0)=(E, C)$.
Assume that a nested sequence
$$(F_0,D^0)\subset (F_1,D^1)\subset\dots \subset (F_n,D^n)$$
has been constructed in such a way that for $i=1,\dots ,n$, we have
$F_i^0=\bigsqcup _{j=0}^{i+1} F^{0,j}$ for some finite sets
$F^{0,j}$ and $F_i^1=\bigsqcup _{j=0}^i F^{1,,j}$, with $s(F^{1,j})=
F^{0,j+1}$ and $r(F^{1,j})= F^{0,j}$ for $j=1,\dots ,i$. We can
think of $(F_n, D^n)$ as a union of $n$
 bipartite separated graphs. Set $V_n= F^{0, n}$,
 and let $(F_{n+1}, D^{n+1})$ be the multiresolution of $(F_n, D^n)$
 at $V_n$. (Note that there are no edges between elements of $V_n$.)
  Then $F_{n+1}^0= F_n^0\bigsqcup F^{0,n+2}=\bigsqcup _{j=0}^{n+2} F^{0,j}$ and
 $F_{n+1}^1= F_n^1\bigsqcup F^{1,n+1}= \bigsqcup _{j=0}^{n+1}
 F^{1,j}$, with $s(F^{1,n+1}) = F^{0,n+2}$ and $r(F^{1,n+1})=s(F^{1,n})= F^{0,n+1}$.

\smallskip

\noindent (b) Let
 $$(F_{\infty}, D^{\infty})= \bigcup _{n=0}^{\infty} (F_n,
D^n) \, .$$
 Observe that $(F_{\infty}, D^{\infty})$ is the direct limit of the
 sequence $\{ (F_n,D^n) \}$ in the category $\FSGr$ defined in
 \cite[Definition 8.4]{AG}. We call
$(F_{\infty}, D^{\infty})$ the {\it complete multiresolution} of
$(E,C)$.

\smallskip

\noindent (c) We define a canonical sequence $ (E_n,C^n)$ of finite
bipartite separated graphs as follows:
\begin{enumerate}
\item Set $(E_0,C^0)= (E,C)$.
\item $E_n^{0,0}= F^{0,n}$,  $E_n^{0,1}=F^{0,n+1}$, and $E_n^1= F^{1,n}$.
Moreover $C^n_v= D^n_v$ for all $v\in E_n^{0,0}$ and
$C^n_v=\emptyset$ for all $v\in E_n^{0,1}$.
\end{enumerate}
We call the sequence $\{ (E_n,C^n)\}_{n\ge 0}$ the {\it canonical
sequence of bipartite separated graphs} associated to $(E,C)$.
\end{construction}

We will need the following Lemma, whose proof is contained in \cite[Lemma 4.5]{AE}.

\begin{lemma}
\label{lem:refinement} Let $(E,C)$ be a finite bipartite separated
graph, let $(E_n,C^n)$ be the canonical sequence of bipartite
separated graphs associated to $(E, C)$, and let $(F_{\infty},
D^{\infty})$ be the complete multiresolution of $(E,C)$. Then the
following properties hold:
\begin{enumerate}
\item[(a)]  For each $n\ge 0$, there is a natural isomorphism
$$\varphi _n \colon M(E_{n+1}, C^{n+1}) \longrightarrow M((E_n)_{V_n},
(C^n)^{V_n}), $$ where $V_n= E_n^{0,0}= F^{0,n}$.
\item[(b)] For each $n\ge 0$, there is a canonical embedding
$$\iota _n \colon
M(E_n, C^n)\to M(E_{n+1}, C^{n+1}).$$
\item[(c)] The canonical inclusion $j_n\colon (E_n,C^n)\to (F_n,
D^n)$ induces an isomorphism  $$M(j_n)\colon M(E_n,C^n)\to M(F_n,
D^n).$$
\item[(d)] We have $M(F_{\infty}, D^{\infty}) \cong  \varinjlim (M(E_n, C^n),\iota _n)$.
\end{enumerate}
\end{lemma}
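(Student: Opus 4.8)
All four assertions repackage, in the language of the canonical sequence $\{(E_n,C^n)\}$, the refinement structure of the multiresolution tower of Construction \ref{cons:complete-multiresolution}; the complete argument is \cite[Lemma 4.5]{AE}, and everything rests on the pushout/refinement property of multiresolution monoids already used in the proof of Lemma \ref{lem:K0multires}. The plan is as follows.

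For (a), I would first match the combinatorial data: resolving $(E_n,C^n)$ at $V_n=E_n^{0,0}=F^{0,n}$ adds only new source vertices and new arrows attached to $E_n^{0,1}=F^{0,n+1}$, and by the construction of the tower these new sources are exactly the vertices of $F^{0,n+2}$, the new arrows are $F^{1,n+1}$, and the new groups placed at $F^{0,n+1}$ are precisely the groups $C^{n+1}_w=D^{n+1}_w$. Thus $M((E_n)_{V_n},(C^n)^{V_n})$ is generated by the families $a_v$ $(v\in F^{0,n})$, $a_w$ $(w\in F^{0,n+1})$ and $a_s$ $(s\in F^{0,n+2})$, whereas $M(E_{n+1},C^{n+1})$ uses only the last two. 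The heart of the matter is that the new relations at $F^{0,n+1}$ force $\sum_{e\in X_i}a_{s(e)}=\sum_{(t_1,\dots,t_k)}a_{v(x^{(1)}_{t_1},\dots,x^{(k)}_{t_k})}$ for each $X_i\in C^n_v$, and the right-hand side is visibly independent of $i$; hence every $a_v$ with $v\in F^{0,n}$ is redundant and the cross relations $\sum_{e\in X_i}a_{s(e)}=\sum_{e\in X_j}a_{s(e)}$ hold automatically. Eliminating the $a_v$ identifies the two presentations and produces $\varphi_n$; equivalently one invokes the monoid pushout (\ref{monres}) for $V=V_n$.

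For (b), the embedding is $\iota_n=\varphi_n^{-1}\circ M(\iota_{V_n})$, where $M(\iota_{V_n})\colon M(E_n,C^n)\to M((E_n)_{V_n},(C^n)^{V_n})$ is induced by the multiresolution inclusion; concretely $\iota_n$ fixes $a_w$ for $w\in F^{0,n+1}$ and sends $a_v\mapsto\sum_{e\in X_i}a_{s(e)}$ for $v\in F^{0,n}$. Injectivity is the first genuinely nontrivial point: because these separated-graph monoids need not embed into their universal groups, it cannot be deduced from the group-level injectivity of $G(\psi)$ in Lemma \ref{lem:refin-exactsequence}, and I would instead cite the refinement-monoid machinery of \cite{AG} (as organized in \cite[Lemma 4.5]{AE}), which ensures that a multiresolution never collapses distinct elements of the base monoid. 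Part (c) I would prove by induction on $n$, with $n=0$ trivial since $(E_0,C^0)=(F_0,D^0)$. For the inductive step I would use that $(F_{n+1},D^{n+1})$ is the multiresolution of $(F_n,D^n)$ at $V_n$, and that this operation touches only the top window, adding groups at $F^{0,n+1}$ and new sources $F^{0,n+2}$ while leaving the lower layers untouched; since $(E_n,C^n)$ records exactly this window and $M(j_n)$ is an isomorphism by hypothesis, applying (a) and transporting along $M(j_n)$ yields $M(j_{n+1})$. The point to verify here is the naturality of the two inclusions $(E_n,C^n)\hookrightarrow(F_n,D^n)$ under the multiresolution operation.

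Finally, for (d) I would use that the monoid functor $M(-)$ carries direct limits in $\FSGr$ to direct limits of commutative monoids, so that, since $(F_\infty,D^\infty)$ is such a limit by Construction \ref{cons:complete-multiresolution}(b), one has $M(F_\infty,D^\infty)\cong\varinjlim_n M(F_n,D^n)$ along the cumulative inclusions. Combining this with the isomorphisms $M(j_n)$ of (c), it remains only to check the commuting squares $M(j_{n+1})\circ\iota_n=M(\mathrm{incl})\circ M(j_n)$ relating the connecting maps $\iota_n$ to the cumulative inclusions; granting this, $\varinjlim_n M(E_n,C^n)\cong\varinjlim_n M(F_n,D^n)\cong M(F_\infty,D^\infty)$. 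The main obstacle throughout is the monoid-level refinement compatibility underlying (a)--(c): the assertion that the refined relations are \emph{forced} rather than \emph{imposed}, so that the natural maps are injective. This is precisely the content borrowed from \cite[Lemma 4.5]{AE} and the refinement results of \cite{AG}.
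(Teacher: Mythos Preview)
Your sketch is correct and in fact more detailed than what the paper provides: the paper gives no proof at all for this lemma, simply stating that ``its proof is contained in \cite[Lemma 4.5]{AE}.'' Your outline faithfully reconstructs the argument from that reference, and you correctly isolate the one genuinely nontrivial point --- the monoid-level injectivity in (b), which cannot be read off from the group-level Lemma~\ref{lem:refin-exactsequence} and requires the refinement-monoid machinery of \cite{AG}.
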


\medskip

Let $(E,C)$ be a finite bipartite separated graph, with
$r(E^1)=E^{0,0}$ and $s(E^1)=E^{0,1}$. Let $\{ (E_n, C^n)\}_{n\ge
0}$ be the canonical sequence of bipartite separated graphs
associated to it (see Construction
\ref{cons:complete-multiresolution}(c)), and let $B_n$ be the
commutative C*-subalgebra of $C^*(E_n,C^n)$ generated by $E_n^0$.

\begin{theorem} {\rm (cf. \cite[Theorem 5.1]{AE})}
\label{thm:algebras} With the above notation, for each $n\ge 0$,
there exists a surjective homomorphism $$\phi _n \colon
C^*(E_n, C^n)\twoheadrightarrow C^*(E_{n+1}, C^{n+1}).$$
Moreover,
the following properties hold:
\begin{enumerate}
\item[(a)]
 $\ker (\phi _n)$
is the ideal $I_n$ of $C^*(E_n,C^n)$ generated by all the
commutators $[ee^*, ff^*]$, with $e,f\in E_n^1$, so that
$C^*(E_{n+1}, C^{n+1})\cong C^*(E_n, C^n))/I_n$.
\item[(b)] The restriction of $\phi_n$ to $B_n$ defines an injective homomorphism from $B_n$
into $B_{n+1}$.
\item[(c)] There is a commutative diagram
\begin{equation}
\label{eq:commu-daigram1}
 \begin{CD}
G(M(E_n, C^n)) @>G(\iota_n)>> G(M(E_{n+1}, C^{n+1}))\\
@V{\cong}VV @VV{\cong}V  \\
K_0(C^*(E_n, C^n))@>K_0(\phi _n)>> K_0(C^*(E_{n+1}, C^{n+1}))
\end{CD}\end{equation}
where the vertical maps are the canonical maps, which are
isomorphisms by \cite[Theorem 5.2]{AG2}.
\end{enumerate}
\end{theorem}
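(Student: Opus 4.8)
The plan is to build $\phi_n$ explicitly on generators and then to produce an inverse for the map it induces on the quotient $C^*(E_n,C^n)/I_n$. Recall from Construction \ref{cons:complete-multiresolution} and Definition \ref{multiresatv} that $(E_{n+1},C^{n+1})$ is the top bipartite layer of the multiresolution of $(E_n,C^n)$ at $V_n=E_n^{0,0}$: its source vertices are the new vertices $v(e_1,\dots,e_k)$ (for $v\in E_n^{0,0}$ with $C^n_v=\{X_1,\dots,X_k\}$ and $e_i\in X_i$), its range vertices are $E_n^{0,1}$, and its edges are the arrows $\alpha^{e_i}(e_1,\dots,\widehat{e_i},\dots,e_k)$. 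I would define $\phi_n$ on the generators of $C^*(E_n,C^n)$ by $\phi_n(w)=w$ for $w\in E_n^{0,1}$, $\phi_n(v)=\sum v(e_1,\dots,e_k)$ (sum over all tuples at $v$) for $v\in E_n^{0,0}$, and, for an edge $e\in X_i$ with $r(e)=v$, $\phi_n(e)=\sum \alpha^{e}(e_1,\dots,\widehat{e_i},\dots,e_k)^*$, the sum running over all choices $e_j\in X_j$, $j\neq i$.

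To see this is a well-defined $*$-homomorphism I would check (V), (E), (SCK1), (SCK2). Relations (V) and (E) are immediate from the orthogonality of the vertices of $E_{n+1}$. For (SCK2) at $v$ and $X_i$, note $\phi_n(e)\phi_n(e)^*$ is the sum of the source projections $v(e_1,\dots,e_{i-1},e,e_{i+1},\dots,e_k)$, so summing over $e\in X_i$ recovers $\phi_n(v)$. The only step needing care is (SCK1): for $e\neq e'$ in the same $X_i$, every cross term $\alpha^{e}(\cdots)\,\alpha^{e'}(\cdots)^*$ vanishes because the source vertices $v(\dots,e,\dots)$ and $v(\dots,e',\dots)$ differ in their $i$-th slot and are thus orthogonal, giving $\phi_n(e)^*\phi_n(e')=0$; for $e=e'$ the surviving diagonal terms sum to $s(e)$ by (SCK2) in $(E_{n+1},C^{n+1})$. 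Surjectivity follows by extracting the generators of $C^*(E_{n+1},C^{n+1})$: each $v(e_1,\dots,e_k)$ is the (commuting) product $\prod_i \phi_n(e_i)\phi_n(e_i)^*$, and each arrow is recovered via $\alpha^{e_i}(\cdots)^*=\phi_n(e_i)\,v(e_1,\dots,e_k)$.

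For (a), since the ranges $\phi_n(e)\phi_n(e)^*$ are sums of mutually orthogonal vertices they commute, so $I_n\subseteq\ker\phi_n$ and $\phi_n$ factors through a surjection $\overline{\phi}_n\colon C^*(E_n,C^n)/I_n\to C^*(E_{n+1},C^{n+1})$. To prove $\ker\phi_n=I_n$ I would construct the inverse $\psi$ on the generators of $C^*(E_{n+1},C^{n+1})$ by $w\mapsto w$, $v(e_1,\dots,e_k)\mapsto \overline{e_1e_1^*\cdots e_ke_k^*}$, and $\alpha^{e_i}(\cdots)\mapsto \overline{e_i^*\,e_1e_1^*\cdots e_ke_k^*}$, and then check $\psi\circ\overline{\phi}_n=\mathrm{id}$ and $\overline{\phi}_n\circ\psi=\mathrm{id}$ on generators. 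I expect this to be the main obstacle: one must verify that $\psi$ respects (SCK1) and (SCK2) for $(E_{n+1},C^{n+1})$, and this is exactly where passing to $C^*(E_n,C^n)/I_n$ is essential, since it is the forced commutation of the projections $ee^*$ that turns the products $e_1e_1^*\cdots e_ke_k^*$ into an orthogonal resolution of $v$ indexed by the tuples. This is precisely the content of \cite[Theorem 5.1]{AE}, which I would invoke to complete (a).

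Part (b) is then straightforward: $B_n$ is generated by the mutually orthogonal, nonzero vertex projections $E_n^0$ (nonzero by \cite[Theorem 3.8]{AG2}), so $B_n\cong\C^{E_n^0}$, and $\phi_n$ sends distinct vertices to sums over disjoint sets of nonzero orthogonal vertices of $E_{n+1}$; hence $\phi_n$ is injective on $B_n$ with image in $B_{n+1}$. For (c), the vertical maps are the natural isomorphisms $G(M(E_n,C^n))\cong K_0(C^*(E_n,C^n))$ of \cite[Theorem 5.2]{AG2}, which are functorial in the separated graph, so commutativity of (\ref{eq:commu-daigram1}) reduces to checking that $\mon{\phi_n}$ agrees with $\iota_n$ on the canonical generators of $M(E_n,C^n)$. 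Indeed $[\phi_n(v)]=\sum[v(e_1,\dots,e_k)]$ and $[\phi_n(w)]=[w]$ match $\iota_n$ under the identification of Lemma \ref{lem:refinement}(a),(b); applying the universal-group functor $G$ then yields the square.
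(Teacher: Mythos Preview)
Your proposal is correct and matches the paper's approach exactly: the paper does not give an independent proof of this theorem but cites \cite[Theorem 5.1]{AE}, and the explicit formulas you write down for $\phi_n$ are precisely those the paper recalls immediately after the statement. Your verifications of (V), (E), (SCK1), (SCK2), your construction of the inverse $\psi$ for part (a), and your arguments for (b) and (c) are all sound, and your decision to invoke \cite[Theorem 5.1]{AE} for the delicate point that $\psi$ is well defined on $C^*(E_{n+1},C^{n+1})$ is exactly what the paper does.
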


Since we shall use it later, we recall here the definition of the map $\phi_n$ appearing
in Theorem \ref{thm:algebras}(a) (see the proof of \cite[Theorem 5.1]{AE}). The map
$\phi_n$ is defined on vertices $u\in E_n^{0,0}$ by the formula
$$\phi_n (u)= \sum _{(x_1,\dots ,x_{k_u})\in \prod_{i=1}^{k_u} X^u_i} v(x_1,\dots
,x_{k_u}), $$ where $C_u=\{ X_1^u,\dots , X^u_{k_u} \}$, and by $\phi
_n(w)=w$ for all $w\in E_n^{0,1}$. For an arrow $x_i\in X^u_i$, we have
$$\phi _n(x_i)= \sum _{x_j\in X^u_j, j\ne i} (\alpha
^{x_i}(x_1,\dots , \widehat{x_i},\dots ,x_{k_u}))^* \, ,$$ where
$\alpha ^{x_i}(x_1,\dots , \widehat{x_i},\dots ,x_{k_u})=\alpha
^{x_i}(x_1,\dots , x_{i-1}, x_{i+1}, \dots  ,x_{k_u})$.

To simplify the notation, we will write $D_n= F^{0,n}=E_n^{0,0}$ for
all $n\ge 0$.

Note that, for $n\ge 2$ we have a surjective map $r_n\colon D_{n}\to
D_{n-2}$ given by $r_n (v(x_1,\dots x_{k_u}))= u$, where $u\in
D_{n-2}$ and $x_i\in X^u_i$, and where, as usual, $C^{n-2}_u=\{X^u_1,\dots
, X^u_{k_u} \}$. For $n=2m$, we thus obtain a surjective map $\gotr
_{2m}=r_2\circ r_4\circ \cdots \circ r_{2m}\colon D_{2m}\to D_0$.
Similarly, we have a map $\gotr_{2m+1}= r_3\circ r_5 \circ \cdots
\circ r_{2m+1} \colon D_{2m+1}\to D_1$. We call $\gotr (v)$ the {\it
root} of $v$. Observe that we have
\begin{equation}
\label{eq:Ddisjintunion} D_{2n}=\bigsqcup _{v\in D_0}
\gotr_{2n}^{-1}(v); \qquad D_{2n+1}=\bigsqcup _{v\in D_1}
\gotr_{2n+1}^{-1}(v)
\end{equation}

Set $A_n= C^*(E_n,C^n)$. Then $\mathcal O (E,C)=  \varinjlim A_n$, and
$B_{\infty}= \varinjlim B_n= C(\Omega (E,C))$. We have a commutative
diagram as follows:

$$\xymatrix@!=6.5pc{B_0 \ar[r] \ar[d] & B_1 \ar[r] \ar[d] & B_2 \ar[d]
\ar @{.>}[rr] & &  C(\Omega(E,C))\ar[d]\\
A_0 \ar[r]^{\phi _0}  & A_1 \ar[r]^{\phi_1}  & A_2 \ar @{.>}[rr] & &
\mathcal O (E,C)}
$$

All the maps $B_n\to B_{n+1}$ are injective and all the maps $A_n\to
A_{n+1}$ are surjective.

Let $\mathbb F$ be the free group on $E^1$.  There is a natural {\it partial
action} $\theta$ of $\mathbb F$ on $\Omega (E,C)$ so that $$\mathcal
O (E,C)\cong C(\Omega (E,C))\rtimes_{\theta^*}\mathbb F .$$ Moreover
$(\Omega (E,C), \theta )$ is the universal $(E,C)$-dynamical system
(see \cite{AE}). Let us recall the definition:

\begin{definition}
\label{def:univtopspaces} An {\it $(E,C)$-dynamical system} consists
of a compact Hausdorff space $\Omega$ with a family of clopen
subsets $\{ \Omega_v \}_{v\in E^{0}}$ such that
$$\Omega = \bigsqcup _{v\in E^{0}} \Omega_v , $$ and, for each $v\in E^{0,0}$,
a family of clopen subsets $\{ H_x \}_{x\in r^{-1}(v)}$ of
$\Omega_v$, such that
$$\Omega_v= \bigsqcup _{x\in X} H_x \qquad \text{ for all } X\in C_v , $$
together with a family of homeomorphisms
$$\theta_x\colon \Omega_{s(x)} \longrightarrow H_x $$
for all $x\in E^1$.

Given two $(E,C)$-dynamical systems $(\Omega, \theta)$, $(\Omega ',
\theta')$, there is an obvious definition of equivariant map
$f\colon (\Omega, \theta) \to (\Omega',\theta ')$, namely $f\colon
\Omega \to \Omega '$ is {\it equivariant} if $f(\Omega_w)\subseteq
\Omega'_w$ for all $w\in E^{0}$, $f(H_x)\subseteq H'_x$ for all
$x\in E^1$ and $f(\theta_x(y))= \theta'_x (f(y))$ for all $y\in
\Omega_{s(x)}$.

We say that an $(E,C)$-dynamical system $(\Omega ,\theta)$ is {\it
universal} in case there is a unique continuous equivariant map from
every $(E,C)$-dynamical system to $(\Omega ,\theta )$.
\end{definition}

We write $\Omega (E,C)=\bigsqcup _{v\in E^0}\Omega(E,C)_v$, $\Omega
(E,C)_v=\bigsqcup _{x\in X} H_x$ for all $X\in C_v$ $\, \, $($v\in
E^{0,0}$), and $\, \theta _x\colon \Omega(E,C)_{s(x)}\to H_x \, $
for the structural clopen sets and homeomorphisms of the universal
$(E,C)$-dynamical system.

We have
$$\varprojlim _i (D_{2i}, r_{2i}) = \Omega ^0:= \bigsqcup _{v\in
D_0} \Omega (E,C) _v, \qquad \varprojlim _i (D_{2i+1},
r_{2i+1})=\Omega^1:= \bigsqcup_{v\in D_1} \Omega (E,C)_v .$$ In the
following $\mathfrak r_{2k,\infty}\colon \Omega ^0 \to D_{2k}$ and
$\mathfrak r_{2k+1,\infty}\colon \Omega ^1\to D_{2k+1}$ will denote
the canonical projective limit surjections. The family $\{ \mathfrak
r_{k,\infty}^{-1} (v)\mid v\in D_k, k=0,1,2, \dots \}$ is a basis of
clopen sets for the topology of $\Omega (E,C)$.

\begin{theorem}
\label{thm:K0C(E,C)} Let $(E,C)$ be a finite bipartite separated
graph, and let $\pi \colon C^*(E,C)\to \mathcal O (E,C)$ be the natural projection map.
 Then $K_0(\pi)$ is a split monomorphism, and its cokernel $H$ is a free abelian group. Moreover, there are subsets $W_k \subset D_k$, for $k=2,3,\dots ,
$ such that $H\cong \bigoplus_{k=2}^{\infty} \Z^{W_k} $. In particular, we have
$$K_0(\mathcal O(E,C))\cong K_0(C^*(E,C))\oplus \Big(\bigoplus_{k=2}^{\infty} \Z^{W_k} \Big).$$
\end{theorem}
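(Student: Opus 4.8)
The plan is to exploit the direct limit structure $\mathcal O(E,C) = \varinjlim A_n$ together with the continuity of $K_0$, reducing the computation to an analysis of the maps $K_0(\phi_n) \colon K_0(C^*(E_n,C^n)) \to K_0(C^*(E_{n+1},C^{n+1}))$. First I would observe that by Theorem \ref{thm:algebras}(c), each $K_0(\phi_n)$ is identified with $G(\iota_n) \colon G(M(E_n,C^n)) \to G(M(E_{n+1},C^{n+1}))$, and by Lemma \ref{lem:refinement}(a) this factors through the multiresolution isomorphism. The key input is Lemma \ref{lem:K0multires}: since $(E_{n+1},C^{n+1})$ is obtained (up to the isomorphism $\varphi_n$) as a multiresolution of $(E_n,C^n)$ at $V_n = E_n^{0,0} = D_n$, we get a canonical splitting
\begin{equation*}
K_0(C^*(E_{n+1},C^{n+1})) \cong K_0(C^*(E_n,C^n)) \oplus \Z^{W_{n+2}},
\end{equation*}
where $W_{n+2} \subset D_{n+2}$ is exactly the set of new source vertices $v(x^{(1)}_{t_1},\dots,x^{(k_u)}_{t_{k_u}})$ with at least two indices $t_i,t_j>1$, arising from the vertices $u \in D_n$. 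Here I would need to track carefully that the multiresolution at $V_n$ introduces new vertices living in $F^{0,n+2} = D_{n+2}$, which is why the index on $W$ is shifted by two relative to the resolution step.

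Next I would assemble these splittings into a telescoping decomposition. Because each map $K_0(\phi_n)$ is a split monomorphism that realizes $K_0(C^*(E_n,C^n))$ as a direct summand of $K_0(C^*(E_{n+1},C^{n+1}))$ with complement $\Z^{W_{n+2}}$, an induction gives
\begin{equation*}
K_0(C^*(E_n,C^n)) \cong K_0(C^*(E,C)) \oplus \Big(\bigoplus_{k=2}^{n+1} \Z^{W_k}\Big),
\end{equation*}
compatibly with the connecting maps, where the $W_k$ are precisely the subsets of $D_k$ described above. I would then pass to the direct limit: since $K_0$ commutes with direct limits of C*-algebras and $\mathcal O(E,C) = \varinjlim A_n$, the colimit of the $K_0(C^*(E_n,C^n))$ along the $K_0(\phi_n)$ is $K_0(\mathcal O(E,C))$. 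Because each transition map is the inclusion of a direct summand, the colimit is simply the union of all summands, namely $K_0(C^*(E,C)) \oplus \big(\bigoplus_{k=2}^{\infty} \Z^{W_k}\big)$, which is the desired formula. The map $K_0(\pi)$ corresponds to the composite $K_0(C^*(E,C)) = K_0(A_0) \to K_0(\varinjlim A_n)$, so it is the inclusion of the first summand, hence a split monomorphism with free cokernel $H \cong \bigoplus_{k=2}^{\infty} \Z^{W_k}$.

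The main obstacle I anticipate is \emph{compatibility of the splittings across the direct system}: Lemma \ref{lem:K0multires} produces a splitting for each single step, but to take the colimit cleanly I must ensure these splittings are chosen coherently, so that the summand $K_0(C^*(E_n,C^n))$ sitting inside $K_0(C^*(E_{n+1},C^{n+1}))$ is exactly the image of $K_0(\phi_n)$, and that the new summand $\Z^{W_{n+2}}$ maps into the correspondingly indexed summand at the next stage (rather than mixing with the old part). This requires verifying that the generators of $\Z^{W_k}$, namely the classes $[v]$ for $v \in W_k \subset D_k$, are carried by the subsequent maps $K_0(\phi_m)$ ($m \ge k$) into the submonoid generated by vertices of higher level without disturbing the already-split-off summands — essentially a bookkeeping argument using the explicit formula for $\phi_n$ on vertices recalled after Theorem \ref{thm:algebras}. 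Once this coherence is established, the direct limit argument is formal; the real work is in pinning down the indexing $W_k \subset D_k$ and checking that the level-$k$ generators first appear at stage $n = k-2$ and persist as independent summands thereafter.
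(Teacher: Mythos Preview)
Your proposal is correct and follows essentially the same route as the paper: use continuity of $K_0$, identify $K_0(\phi_n)$ with $G(\iota_n)$ via Theorem~\ref{thm:algebras}(c), apply Lemma~\ref{lem:K0multires} at each step to split off a free summand $\Z^{W_{n+2}}$, and telescope. The only difference is in how the coherence obstacle you flag is resolved: the paper does not track the images of the $W_k$-generators under later $\phi_m$, but instead builds isomorphisms $\theta_i\colon G(M(E_i,C^i))\to K_0(C^*(E,C))\oplus\Z^{W_2}\oplus\cdots\oplus\Z^{W_{i+1}}$ by induction, using only the fact (implicit in the proof of Lemma~\ref{lem:K0multires}) that the splitting isomorphism $\gamma_i$ satisfies $\gamma_i([v])=[v]$ for all $v\in E_i^0$; this alone forces the square with $G(\iota_i)$ and the inclusion $j_1^{(i)}$ to commute, and no further bookkeeping on the fate of the $W_k$-classes is needed.
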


\begin{proof}
We have $K_0(\mathcal O (E,C))\cong \varinjlim_k K_0(C^*(E_k,C^k))$.

By Theorem \ref{thm:algebras}(c), it is enough to compute the limit
$\varinjlim (G(M(E_k,C^k)), G(\iota _k))$. Now the map $\iota
_k\colon M(E_k,C^k)\to M(E_{k+1}, C^{k+1})$ is the composition of
the canonical map $\iota _{V_k}\colon M(E_k,C^k)\to M((E_k)_{V_k},
(C^k)^{V_k})$ and the isomorphism $\varphi_k^{-1}\colon
M((E_k)_{V_k}, (C^k)^{V_k})\to
 M(E_{k+1},C^{k+1})$ (cf. \cite[Lemma 4.5]{AE}).

By (the proof of) Lemma \ref{lem:K0multires}, there are subsets $W_i$
of $D_{i}$, for $i=2,3,\dots, $ and isomorphisms
$$\gamma _i\colon G(M((E_i)_{V_i}, (C^i)^{V_i}))\overset{\cong}{\longrightarrow}  G(M(E_i, C^i)) \oplus
\Z^{W_{i+2}}\, ,$$ such that  $\gamma _i([v])= [v]$ for all $v\in
E_i^0$, $i=0,1,2,\dots  $.

We construct by induction a family of group isomorphisms
$$\theta _i\colon G(M(E_i,C^i))\to K_0(C^*(E,C))\oplus \Z^{W_2}\oplus \cdots \oplus
\Z^{W_{i+1}} $$ such that all the diagrams

\begin{equation}
\begin{CD}
G(M(E_i,C^i)) @>{\theta _i}>>
  K_0(C^*(E,C))\oplus \Z^{W_2}\oplus \cdots \oplus
\Z^{W_{i+1}} \\
@V{G(\iota _i)}VV  @V{j^{(i)}_1 }VV   \\
G(M(E_{i+1}, C^{i+1})) @>{\theta _{i+1}}>>  K_0(C^*(E,C))\oplus
\Z^{W_2}\oplus \cdots \oplus \Z^{W_{i+2}}
\end{CD}
\end{equation}
are commutative, where $j^{(i)}_1$ is the natural inclusion. The map
$\theta _0\colon G(M(E,C))\to K_0(C^*(E,C))$ is defined to be the
natural isomorphism. Assume that $\theta_0,\dots ,\theta _k$ have
been defined for some $k\ge 0$. Define the map
$$\widetilde{\gamma}_k \colon G(M((E_k)_{C_k},
(C^k)^{V_k}))\longrightarrow K_0(C^*(E,C))\oplus \Z^{W_2}\oplus
\cdots \oplus \Z^{W_{k+2}}$$ by $\widetilde{\gamma}_k= (\theta
_k\oplus \text{id}_{\Z^{W_{k+2}}})\circ \gamma _k$. Define
$\theta_{k+1}= \widetilde{\gamma}_k \circ G(\varphi_k)$. Then the
two squares in the following diagram are commutative:
\begin{equation}
\begin{CD}
G(M(E_k,C^k)) @>{\theta _k}>>
  K_0(C^*(E,C))\oplus \Z^{W_2}\oplus \cdots \oplus
\Z^{W_{k+1}} \\
@V{G(\iota _{V_k})}VV  @V{j^{(k)}_1 }VV   \\
G(M((E_k)_{V_k},(C^k)^{V_k})) @>{\widetilde{\gamma}_k}>>
K_0(C^*(E,C))\oplus  \Z^{W_2}\oplus \cdots \oplus \Z^{W_k+2} \\
@V{G(\varphi _k^{-1})}V{\cong}V  @V{=}VV   \\
 G(M(E_{k+1},
C^{k+1})) @>{\theta _{k+1}}>> K_0(C^*(E,C))\oplus \Z^{W_2}\oplus
\cdots \oplus \Z^{W_{k+2}}
\end{CD}
\end{equation}
Since $G(\iota_k)=G(\varphi_k^{-1})\circ G(\iota_{V_k})$, we have
completed the induction step.

We obtain
$$K_0(\mathcal O(E,C))\cong \varinjlim _k (G(M(E_k,C^k)), G(\iota_k))\cong K_0(C^*(E,C))\oplus \Big(\bigoplus_{k=2}^{\infty} \Z^{W_k} \Big)\, ,$$
as desired.
\end{proof}

\begin{remark}
\label{rem:dualofOEC} The Pontrjagin dual of $K_0(\mathcal O (E,C))$
can be identified with the set of $\mathbb T$-valued measures
defined on the field $\mathbb K$ of clopen subsets of $\Omega
(E,C)$, which are invariant under the action of $\mathbb F$. This is
exactly the dual of the type semigroup $S(\Omega (E,C),\mathbb F,
\mathbb K)$, considered in \cite[Section 7]{AE}.

Let $\mathcal B =\{ \mathfrak r _{k,\infty}^{-1} (v)\mid v\in W_k,
k=2,3,\dots  \}$. Then $\mathcal B$ is a family of clopen subsets of
$\Omega :=\Omega (E,C)$, which together with $K_0(C^*(E,C))$,
determine the Pontrjagin dual of $K_0(\mathcal O (E,C))$. Namely any
character $\lambda $ on $K_0(\mathcal O (E,C))$ is determined by its
values on the structural clopen sets $\Omega _v:= \Omega (E,C)_v$,
$v\in E^0$ (which have to fulfill
the relations $\lambda (\Omega _v)=
\prod _{x\in X} \lambda (\Omega _{s(x)})$ for every $v\in D_0$ and
every $X\in C_v$), and by the values $\lambda (U)$, for $U\in
\mathcal B$, which can be arbitrary complex numbers of modulus one.
The values of $\lambda $ on the other clopen sets of $\Omega$ are
determined inductively by the rules indicated in Remark
\ref{rem:dual1}.
\end{remark}

\begin{remark}
\label{rem:dense} With suitable conditions of connectedness, the
open set $\bigcup _{U\in \mathcal B} U$ is a dense subset of
$\Omega$ (where $\mathcal B$ is as in Remark \ref{rem:dualofOEC}).
For instance, we consider the separated graph $(E,C)=(E(m,n), C(m,n))$
appearing in \cite[Example 9.3]{AE} (see also \cite{AEK}), with $1<m\le
n$. We have $D_0=\{v_0\}$, $D_1=\{v_1\}$, and $C_{v_0}=\{X^{v_0},Y^{v_0} \}$,
with $|X^{v_0}|=n$ and $|Y^{v_0}|=
m$. Now, we consider the multiresolution of $(E,C)$, and we use the notation
introduced before. We get $|C_v|=2$ if $v\in
D_{2k}$ and $|C_v|=n+m$ if $v\in D_{2k+1}$. We have
 $C_{v_1}= \{X^{v_1}_1,\dots ,X^{v_1}_n, Y_1^{v_1},\dots,
Y_m^{v_1} \}$, with
$$|X_i^{v_1}|= m, \qquad |Y^{v_1}_j|= n  \quad (1\le i\le n, \, 1\le
j\le m).$$ One checks inductively that, for $v\in D_{2k}$,
$C_v=\{X^v,Y^v \}$, with $|X^v|=|X^{v'}|$ and $|Y^v|=|Y^{v'}|$ for
all $v,v'\in D_{2k}$, and that, for $v\in D_{2k+1}$,
$C_v=\{X_1^v,\dots X_n^v,Y^v_1,\dots , Y^v_m\}$, with
$$|X_i^v|= |X_{i'}^{v'}|,\qquad |Y_j^v|=|Y^{v'}_{j'}| ,\quad (1\le
i,i'\le n,\, 1\le j,j'\le m, \,  v,v'\in D_{2k+1} ).$$ Moreover, one has, for $v\in
D_{2k}$, $|X^v|= |X_1^{w}|^{n-1}|Y_1^w|^m$,  and $|Y^v|=
|X_1^w|^n|Y^w_1|^{m-1}$ where $w\in D_{2k-1}$, and, for $v\in
D_{2k+1}$, $|X^v_i|= |Y^w|$ and $|Y_j^v|= |X^w|$ for any $w\in
D_{2k}$. This clearly implies that $|C_v|\ge 2$, and $|X|\ge 2$ for
all $v\in D_k$ and for all $X\in C_v$. Using these inequalities, and
the constructions made in Lemmas \ref{lem:refin-exactsequence} and
\ref{lem:K0multires}, it follows that, if $v\in D_k$ for some $k$,
then $r_{k+2}^{-1}(v)\cap W_{k+2}\ne \emptyset$. Therefore
$$\mathfrak r_{k,
\infty}^{-1}(v)\cap \mathfrak r_{k+2}^{-1}(W_{k+2})\ne \emptyset.$$

Since the family $\{\mathfrak r_{k,\infty}^{-1}(v)\mid v\in D_k,\,\,
k=0,1,2\dots \}$ is a basis for the topology of $\Omega$, we see
that $\bigcup _{U\in \mathcal B} U$ is a dense open set of $\Omega$.
\end{remark}

\section{Partial unitaries in amalgamated free products}
\label{sect:partialunitaries}

In this section, we explicitly compute the image of certain partial unitary classes under the
K-theory map
\begin{equation}
\label{eq:thomsen-map}
K_1(A_1*_B A_2) \to K_0(B),
\end{equation}
  defined in \cite{Thomsen}, where $A_1*_B A_2$ is an amalgamated free product. This will be used in Section \ref{sect:compK1}
  to compute $K_1(\mathcal O (E,C))$ for any finite bipartite separated graph $(E,C)$.

Assume that $A_1,A_2,B$ are separable unital C*-algebras, with $B$ finite-dimensional, and that there are unital
embeddings $\iota_k\colon B\to A_k$, $k=1,2$.
Let $j_k\colon A_k\to A_1*_B A_2$ be the canonical maps.

  In our computations below we will use a special case of a main result by Thomsen, namely  \cite[Theorem 2.7]{Thomsen}.

\begin{theorem}
\label{thm:thomsen}  Let $B$, $A_1$, $A_2$ be separable
C*-algebras. Assume that $B$ is finite-dimensional.  Then there is a $6$-term
exact sequence:
\begin{equation}
\label{eq:6first}
\begin{CD}
K_0(B) @>{\;({i_1}_*,{i_2}_*)\;}>> K_0(A_1)\oplus K_0(A_2) @>{\;{j_1}_*-{j_2}_*\;}>> K_0(A_1 \,*_{B}\, A_2)\\
@AAA & & @VVV \\
K_1(A_1 \,*_{B}\, A_2) @<{\;{j_1}_*-{j_2}_*\;}<< K_1(A_1)\oplus K_1(A_2)
@<{\;({i_1}_*,{i_2}_*)\;}<< K_1(B)
\end{CD}
\end{equation}
\end{theorem}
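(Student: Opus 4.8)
The plan is to deduce this statement directly from Thomsen's general Mayer--Vietoris theorem for full amalgamated free products, \cite[Theorem 2.7]{Thomsen}. That result produces a six-term exact sequence of exactly the shape (\ref{eq:6first}) provided the amalgam $B$ sits inside each $A_k$ in a sufficiently well-behaved way; concretely, the hypothesis one must supply is the existence of conditional expectations $E_k \colon A_k \to B$ for $k=1,2$, together with the separability of all the algebras involved. Thus the entire content of the proof reduces to checking that these two hypotheses hold in the present setting, after which one quotes \cite[Theorem 2.7]{Thomsen} verbatim.

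For the conditional expectations I would exploit the finite-dimensionality of $B$ in an essential way. Writing $B\cong \bigoplus_\ell M_{n_\ell}(\C)$, fixing matrix units for each block, and choosing a state on the corner of $A_k$ cut down by a minimal projection of $B$, one assembles the standard averaging formula to obtain a map $E_k\colon A_k\to B$. Because $\iota_k$ is unital, $E_k$ is a unit-preserving $B$-bimodule projection, i.e.\ a genuine conditional expectation. This construction is routine, and it is precisely here that the restriction ``$B$ finite-dimensional'' enters: for a general amalgam no such expectation need exist, which is what forces the hypothesis in Thomsen's theorem.

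Since the full amalgamated free product of separable C*-algebras over a separable subalgebra is again separable, the algebras $B$, $A_1$, $A_2$ and $A_1*_B A_2$ all meet the separability requirement, so \cite[Theorem 2.7]{Thomsen}, applied with the conditional expectations $E_1,E_2$, yields the exact sequence (\ref{eq:6first}) with the horizontal maps induced functorially by $\iota_k$ and $j_k$. The one point I would treat with care -- and the only genuine obstacle -- is to match our orientation of the maps (the sign of ${j_1}_*-{j_2}_*$ and the direction of the connecting map (\ref{eq:thomsen-map})) with the conventions of \cite{Thomsen}, so that exactness holds with the indexing written here rather than with a reshuffling of the two rows.
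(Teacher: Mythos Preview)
Your proposal is correct and matches the paper's approach: the paper does not give a proof at all but simply records this as a special case of \cite[Theorem 2.7]{Thomsen}, so your reduction to that result is exactly what is intended. Your additional explanation of why finite-dimensionality of $B$ supplies the required conditional expectations is more detail than the paper provides, but it is accurate and fills in precisely the gap between the stated hypothesis here and the hypothesis in Thomsen's theorem.
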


We will need an elementary
  lemma,
  which is surely well known to specialists.

\def \twoBytwo #1#2#3#4{\begin {pmatrix} #1 & #2 \\ #3 & #4 \end {pmatrix}}

\begin{lemma}
  \label {lem:elementary-one}
  Given a unital C*-algebra $A$, and a short exact sequence of C*-algebras
  $$
  0\rightarrow J \rightarrow A \overset{\pi}{\rightarrow} B\rightarrow 0,
  $$
  let $u\in U_n(B)$, meaning the set of all unitary $n\times n$ matrices over $B$.
  Suppose that $v\in U_{2n}(A)$ is
  such that
  $$
  \pi (v) = \twoBytwo u 0 0 {\tilde u^*}
  $$
  and  $[\tilde u]_1=[u]_1$, in $K_1(B)$.
  Then
  $$
  \delta ([u]_1) = \Big[ v\twoBytwo 1000 v^*\Big]- \Big[ \twoBytwo 1000\Big],
  $$
  where   $\delta \colon K_1(B) \to K_0 (J)$ is the index map.
  \end{lemma}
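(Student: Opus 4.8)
The plan is to reduce the statement to the standard recipe for the index map and then to absorb the discrepancy between $\tilde u$ and $u$ using the hypothesis $[\tilde u]_1=[u]_1$. Recall first the definition of $\delta$: since $\twoBytwo u 0 0 {u^*}$ is homotopic to $1_{2n}$ in $U_{2n}(B)$ (Whitehead lemma), it admits a unitary lift $w\in U_{2n}(A)$, and then, by definition, $\delta([u]_1)=[wew^*]_0-[e]_0$, where $e=\twoBytwo 1 0 0 0$ and the right-hand side is a well-defined element of $K_0(J)$ because $\pi(wew^*)=e=\pi(e)$ forces $wew^*-e\in M_{2n}(J)$, so that $wew^*$ and $e$ are projections over the unitization $\widetilde J$ agreeing modulo $J$. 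The same computation shows $\pi(vev^*)=e$, so $[vev^*]_0-[e]_0$ likewise defines an element of $K_0(J)$. Thus the lemma will follow once I prove that $[vev^*]_0=[wew^*]_0$ as classes of projections over $\widetilde J$.

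To compare the two lifts I would set $g=vw^*\in U_{2n}(A)$. A direct computation gives $\pi(g)=\twoBytwo 1 0 0 {\tilde u^* u}$ and $vev^*=g(wew^*)g^*$. The hypothesis $[\tilde u]_1=[u]_1$ yields $[\tilde u^* u]_1=[u]_1-[\tilde u]_1=0$ in $K_1(B)$. Everything therefore reduces to the following claim: if $g\in U_{2n}(A)$ has $\pi(g)=\twoBytwo 1 0 0 c$ with $[c]_1=0$, and $q$ is a projection over $\widetilde J$ with $\pi(q)=e$, then $[gqg^*]_0=[q]_0$. Note that $\pi(g)$ is block-diagonal, hence commutes with $e$, which guarantees $gqg^*$ is again a projection over $\widetilde J$.

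For the claim, the idea is to connect $g$, through unitaries of $A$ whose images stay block-diagonal, to a unitary lying over $\widetilde J$, and then to invoke homotopy invariance of $K_0$. Since $[c]_1=0$, after stabilizing by a factor $1_m$ the unitary $\mathrm{diag}(c,1_m)$ becomes homotopic to the identity; enlarging $g$ to $g\oplus 1_m$ and $q$ to $q\oplus 0_m$ (which changes neither $K_0$-class nor the relevant images) I may therefore assume that $\twoBytwo 1 0 0 c$ is joined to the identity by a path of block-diagonal unitaries $\twoBytwo 1 0 0 {c_t}$. I would then lift this path to a continuous path $g_t$ in $U(A)$ with $g_0=g$ and $\pi(g_1)=1$, so that $g_1\in U(\widetilde J)$. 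Because each $\pi(g_t)$ commutes with $e$, one has $\pi(g_tqg_t^*)=e$ for all $t$, so $t\mapsto g_tqg_t^*$ is a homotopy of projections over $\widetilde J$; homotopy invariance gives $[gqg^*]_0=[g_1qg_1^*]_0$, and since $g_1\in U(\widetilde J)$ the last projection is Murray--von Neumann equivalent to $q$, whence $[gqg^*]_0=[q]_0$. Applying the claim to $q=wew^*$ and $c=\tilde u^* u$ gives $[vev^*]_0=[g(wew^*)g^*]_0=[wew^*]_0$, which is exactly what was needed.

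The main obstacle is the path-lifting step: one must know that a continuous path of unitaries in $B$ whose starting point lifts to a unitary of $A$ can itself be lifted to a continuous path of unitaries in $A$ through the surjection $\pi$. This is a standard fact — a path in the connected component of $1$ in $U(B)$ is locally a product of exponentials $\exp(ih_t)$ of self-adjoint elements, and self-adjoint elements lift continuously along $\pi$ — but it is the one point requiring genuine care, together with the bookkeeping of the stabilization so as to keep all the relevant images block-diagonal and hence commuting with $e$ throughout the homotopy.
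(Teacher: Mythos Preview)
Your argument is correct and follows the same overall strategy as the paper: compare $v$ with a standard lift $z$ of $\twoBytwo u 0 0 {u^*}$ and use the hypothesis $[\tilde u]_1=[u]_1$ to show that the resulting conjugates of $e=\twoBytwo 1000$ have the same $K_0$-class. The difference lies only in the execution of this last step. You homotope your $g=vw^*$ through unitaries with block-diagonal image into $U(\widetilde J)$, which requires the full path-lifting property for unitaries under a surjective $*$-homomorphism. The paper instead observes directly that since $u\tilde u^*$ lies in the identity component of $U_n(B)$ (after stabilization), it lifts to a single unitary $x\in U_n(A)$; setting $w=\twoBytwo 100x$, one then has $\pi(zwv^*)=1$, so $y:=zwv^*\in U_{2n}(\widetilde J)$, and since $wew^*=e$ one gets $[vev^*]_0=[y(vev^*)y^*]_0=[zez^*]_0$ in one line. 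This avoids lifting a path altogether and gives an explicit Murray--von Neumann equivalence via a unitary in $\widetilde J$, which is slightly cleaner than your homotopy argument, though both are standard and yield the same conclusion.
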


\begin{proof}
  We should initially observe that, when $\tilde u = u$, then the conclusion of the
  lemma is essentially the definition of $\delta$.
  Set
  $$
  p= \twoBytwo 1000,
  $$
  and
let $z\in U_{2n}(A)$ be such that
  $$
  \pi(z) = \twoBytwo u00{u^*},
  $$
  so we have  by definition that
  $$
  \delta ([u]_1) = [zpz^*] -[p].
  \eqno{(\star)}
  $$

  By taking the direct sum of  all unitary matrices in sight with a big enough identity matrix, one may suppose that
there is a continuous path $u_t$ of unitaries such that $u_0=u$, and $u_1=\tilde u$.  Therefore, the unitary matrix
$u\tilde u^*$ lies in the connected component of $U_n(B)$, so there exists $x$ in  $U_n(A)$,
such that $\pi(x)=u\tilde u^*$.  Setting
  $$
  w = \twoBytwo 100x,
  $$
  we then have that
  $$
  \pi(zwv^*) =
  \twoBytwo u00{u^*} \twoBytwo 100{u\tilde u^*} \twoBytwo {u^*} 0 0 {\tilde u} =
  \twoBytwo 1001,
  $$
  from where it  follows that the element
  $$
  y:= zwv^*
  $$
  lies in $U_{2n} (\widetilde{J})$.   Working within $K_0(\widetilde J)$ we then have that
  $$
  [vpv^*]_0 = [y(vpv^*)y^*]_0 = [zwpw^*z^*]_0 =  [zpz^*]_0,
  $$
  so the conclusion follows immediately from $(\star)$.
  \end{proof}

We start with an easy case. This case provides the motivation for the more
  sophisticated result that we need later.

\begin{lemma}
 \label{lem:easy-case}
  With the above notation, let $x$ and $y$ be partial isometries, with $x\in M_{\infty}(A_1)$ and $y\in M_{\infty}(A_2)$, such that
$$xx^*= e = yy^* ,\qquad x^*x= f= y^*y , \qquad \text{with } e,f\in M_{\infty}(B) .$$
Then the image of the partial unitary class $[j_2(y)j_1(x)^*]$ under the homomorphism (\ref{eq:thomsen-map}) is precisely $[f]-[e]\in K_0 (B)$.
    \end{lemma}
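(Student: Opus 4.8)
The claim is about the Thomsen index map $\partial\colon K_1(A_1*_B A_2)\to K_0(B)$ applied to the class of the partial unitary $j_2(y)j_1(x)^*$. Here $x\in M_\infty(A_1)$ and $y\in M_\infty(A_2)$ are partial isometries with common initial and final projections $e,f\in M_\infty(B)$. The element $j_2(y)j_1(x)^*$ is a partial isometry in the free product whose source and range projections both equal $j_1(e)=j_2(e)$ (since $e,f$ come from $B$, their images under $j_1$ and $j_2$ agree). So it represents a genuine partial-unitary $K_1$-class, and we want to show its image is $[f]-[e]$.

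**The plan.** The natural strategy is to realize $\partial$ concretely via a mapping-cone / suspension picture, or—cleaner here—to reduce to the elementary Lemma~\ref{lem:elementary-one} by exhibiting an honest lift. I would first recall that $A_1*_B A_2$ sits in a short exact sequence whose associated boundary map is exactly Thomsen's map; concretely, Thomsen builds the six-term sequence from a surjection onto $A_1*_B A_2$ with kernel related to $B$, and the vertical connecting map $\partial$ in~\eqref{eq:6first} is an index map of the type handled by Lemma~\ref{lem:elementary-one}. The main work is to produce a $2n\times 2n$ unitary lift $v$ over the ambient algebra that projects to a block-diagonal unitary of the form $\mathrm{diag}(u,\tilde u^*)$, where $u$ is the partial unitary we started from (completed to a unitary) and $[\tilde u]_1=[u]_1$, so that Lemma~\ref{lem:elementary-one} computes $\partial([u]_1)$ as $[v\,p\,v^*]-[p]$ with $p=\bigl(\begin{smallmatrix}1&0\\0&0\end{smallmatrix}\bigr)$.

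**The key computation.** Once the lift is in place, the point is to identify $[v\,p\,v^*]-[p]$ with $[f]-[e]$. The two partial isometries $x$ and $y$ provide the needed lift: because $xx^*=e=yy^*$ and $x^*x=f=y^*y$, both $j_1(x)$ and $j_2(y)$ are honest partial isometries in $A_1*_B A_2$ implementing a Murray–von Neumann equivalence between $e$ and $f$ \emph{inside each factor}. I would assemble $v$ from the $2\times 2$ matrix built out of $x$, $y$ and their complementary projections $1-e$, $1-f$, so that $v$ is a genuine unitary over $A_1*_B A_2$ (or over a matrix algebra of it) and its image of $p$ under conjugation separates a copy of $f$ supported in one factor from a copy of $e$ in the other. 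Carrying out the block multiplication, $v\,p\,v^*$ should reduce, modulo the kernel ideal, to a projection whose class differs from $[p]$ exactly by $[f]-[e]$; this is where the hypothesis that $e,f\in M_\infty(B)$ is essential, since it guarantees the difference lands in $K_0(B)$ rather than in the larger group $K_0$ of a factor.

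**The main obstacle.** The delicate step is the bookkeeping that turns the abstract index-map formula of Lemma~\ref{lem:elementary-one} into the clean answer $[f]-[e]$: one must choose the auxiliary unitary $x$ (in the notation of that lemma) so that $\pi(x)=u\tilde u^*$ lies in the connected component, verify that $y=zwv^*$ genuinely lands in $U_{2n}(\widetilde J)$, and then track the resulting projection back to $B$. In practice the cleanest route is to take $\tilde u=u$—i.e.\ arrange the lift so that the diagonal blocks are literally $u$ and $u^*$—reducing the situation to the \emph{definition} of $\partial$ and bypassing the homotopy argument entirely. The remaining care is purely algebraic: confirming that the partial isometries $j_1(x)$ and $j_2(y)$ fit together into such a $v$, which follows from their shared initial and final projections in $B$. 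I expect no serious difficulty beyond this assembly, since the structural input (common $e,f$ in $B$) is exactly what makes the two factors glue compatibly in the amalgamated free product.
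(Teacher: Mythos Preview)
Your proposal has a structural gap. You treat the Thomsen connecting map $\partial\colon K_1(A_1*_BA_2)\to K_0(B)$ as if it were the index map of a short exact sequence $0\to J\to A\to A_1*_BA_2\to 0$ with $K_0(J)\cong K_0(B)$, so that Lemma~\ref{lem:elementary-one} applies directly once you manufacture a unitary lift $v$ and read off $[vpv^*]-[p]$ ``modulo the kernel ideal''. No such sequence is available; Thomsen's map is not a standard index map of that shape. The paper instead factors $\partial$ through the suspension and the mapping cone $C$ of $B\to A_1\oplus A_2$ via diagram~\eqref{eq:commu-daigramK1}: one composes the genuine index map $\delta\colon K_1(A_1*_BA_2)\to K_0(S(A_1*_BA_2))$ of the cone sequence~\eqref{eq:ConeExactSequence} with $G_*^{-1}$ (Germain's homomorphism) and then with $p_*\colon K_0(C)\to K_0(B)$. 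So your two alternatives---the mapping-cone picture \emph{or} Lemma~\ref{lem:elementary-one}---are not alternatives at all; both are required, with Lemma~\ref{lem:elementary-one} applied to~\eqref{eq:ConeExactSequence}, not to a sequence whose kernel lives over $B$.

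Concretely, the paper builds the self-adjoint unitaries $u=\left(\begin{smallmatrix}1-e&x\\x^*&1-f\end{smallmatrix}\right)\in M_2(A_1)$ and $v=\left(\begin{smallmatrix}1-e&y\\y^*&1-f\end{smallmatrix}\right)\in M_2(A_2)$, takes their $(-1)$-spectral projections $Q,Q'$, and forms the paths $u_t=(1-Q)+e^{\pi it}Q$, $v_t=(1-Q')+e^{-\pi it}Q'$. These assemble into a projection $D=(P_1,g_1,g_2)$ in $M_2(\widetilde C)$ with $g_i$ conjugates of $P_0=\left(\begin{smallmatrix}1&0\\0&0\end{smallmatrix}\right)$ by $u_t$, $v_t$; the class $z=[D]-[P_0]\in K_0(C)$ visibly has $p_*(z)=[P_1]-[P_0]=[f]-[e]$. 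The remaining work is checking $G_*(z)=\delta([yx^*]_1)$, and \emph{this} is where Lemma~\ref{lem:elementary-one} enters: the required lift is a path $\gamma_t$ in the unitized cone of $A_1*_BA_2$ ending at $vu=\left(\begin{smallmatrix}U&0\\0&\tilde U^*\end{smallmatrix}\right)$ with $U=1-e+yx^*$ and $\tilde U=1-f+x^*y$, and one must verify $[\tilde U]_1=[U]_1$ via an explicit conjugation and then compute $\gamma_tP_0\gamma_t^*$ and match it to $G(D)$. Your sketch never constructs any of these objects, and your suggestion to ``take $\tilde u=u$ and reduce to the definition of $\partial$'' bypasses exactly the splice of the two factors through Germain's map that carries the answer back to $K_0(B)$.
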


\begin{proof}
     It suffices to deal with the case where $x\in A_1$ and $y\in A_2$.

 Denote by $C$ the mapping cone of the map $B\to A_1\oplus A_2$ sending $b$ to $(\iota_1(b), \iota_2(b))$, that is,
 $$C= \{ (b,g_1,g_2) : g_i\in C_0(0,1]\otimes A_i,\ i=1,2,\ b\in B,\ g_1(1)= \iota_1(b),\ g_2(b)=\iota_2 (b) \}.$$
  Let $G\colon C\to S(A_1 *_B A_2) = C_0(0,1)\otimes (A_1*_BA_2)$ be Germain's *-homomorphism, given by
$$G(b,g_1,g_2) (t) = \begin{cases}  j_1(g_1(2t)), & t\in (0,\frac{1}{2}]
\\ j_2(g_2(2-2t)), & t\in [\frac{1}{2} , 1) . \end{cases} $$
Then, by the proof of \cite[Theorem 2.7]{Thomsen}, we have the following commutative diagram:
\begin{equation}
\label{eq:commu-daigramK1}
 \begin{CD}
K_1(A_1*_B A_2)  @>>> K_0(B)\\
@V{\delta}V{\cong}V @AA{p_*}A  \\
K_0(S(A_1*_B A_2)) @>G_*^{-1}>{\cong }> K_0(C).
\end{CD}
\end{equation}
 Here $p\colon C\to B$ is the natural map, which sends $(b,g_1,g_2)$ to $b$, and
 $\delta$ is the index map for the six-term exact sequence of K-groups obtained from the short exact sequence
  \begin{equation}
  \label{eq:ConeExactSequence}
  0\rightarrow S(A_1*_BA_2) \rightarrow C(A_1*_BA_2) \rightarrow A_1*_BA_2\rightarrow 0,
  \end{equation}
  where $C(A_1*_BA_2)$ is the  cone of $A_1*_BA_2$.

 In view of diagram (\ref{eq:commu-daigramK1}), it will be sufficient to find $z\in K_0(C)$ such that $p_*(z)= [f]-[e]$
 and $\delta ([yx^*]) = G_*( z)$. In order to simplify notation, we will suppress the reference to the maps $\iota_k, j_k$
 in the rest of the proof.

 Write
 \begin{equation}
\label{eq:u-and-v}
 u= \begin{pmatrix} 1-e & x \\ x^* & 1-f \end{pmatrix}
    ,\quad\text{and}\quad
  v = \begin{pmatrix}
 1-e & y \\ y^* & 1-f \end{pmatrix}.
 \end{equation}
  It is easy to see that $u$ and $v$ are self adjoint unitary matrices over $A_1$ and
  $A_2$, respectively.
Set $Q:= \frac{1-u}{2}$ and $Q':= \frac{1-v}{2}$. Observe that $Q$ and $Q'$ are the
 spectral
projections corresponding to
  the eigenvalue $-1$ of
  $u$ and $v$ respectively. Put
  $$
  P_0:= \twoBytwo1000, \quad \text{and}\quad  P_1:= \twoBytwo{1-e}00f,
  $$
  which we view as $2\times 2$ matrices over $B$.
  Consider the paths of unitaries
\begin{equation}
 \label{eq:ut-and-vt}
 u_t: = (1-Q) + e^{\pi it} Q
    ,\quad\text{and}\quad
  v_t:= (1-Q')+ e^{- \pi it} Q'
\end{equation}
joining $I_2$ with $u$ and $v$ in $A_1$ and $A_2$, respectively.
Consider the projection
$$ D = (P_1, g_1 , g_2 )$$
in $M_2(\widetilde{C})$, where
$$g_1 (t) = u_tP_0u_t^*, \qquad g_2(t) = v_tP_0 v_t^* .$$
(Observe that $g_1(1)= g_2(1) = P_1$ because $x^*x= f= y^*y$.) Set $z= [D]- [P_0]\in K_0 (C)$.
(Note that $z\in K_0(C)$ because the image of $D$ through the canonical map $M_2(\widetilde{C})\to M_2 (\C)$ is the projection $\begin{pmatrix}
                                                                                                                     1 & 0 \\ 0 & 0
                                                                                                                    \end{pmatrix}\in M_2(\C)$.)

Note that
$$p_*(z)  = [P_1] - [P_0] = [f] -[e] .$$
It remains to show that $\delta ([yx^*]_1) = G_*(z)$.

Notice that  $yx^*$ is a partial isometry, with final projection
  $$
  yx^*xy^* = yfy^* = yy^*yy^* = e,
  $$
  and initial projection
  $$
  xy^*yx^* = xfx^* =   xx^*xx^* = e.
  $$
  So $yx^*$ is in fact a partial unitary, and hence $[yx^*]_1$ is defined to be the $K_1$-class of the unitary element
  $$
  U:= 1-e+yx^*.
  $$
  We will therefore show  that
  $$
  \delta ([yx^*]_1) =
  \delta ([U]_1) =
  G_*(z),
  $$
  by applying Lemma \ref{lem:elementary-one} to the exact sequence (\ref{eq:ConeExactSequence}).

  Letting
  $$
  w=\twoBytwo x{1-e}{1-f}{x^*},
  $$
  an easy computation shows that
  $$
  w^*\twoBytwo{1-e+yx^*}001 w = \twoBytwo{1-f+x^*y}001,
  $$
  so we see that the unitary element
  $$
  \tilde U:= 1-f+x^*y
  $$
  has the same $K_1$-class as $U$.   Observe moreover that
  $$
  vu = \twoBytwo {1-e+yx^*}00{1-f+y^*x} = \twoBytwo U00{\tilde U^*}.
  $$

  In order to apply Lemma \ref{lem:elementary-one} we thus need to find a lifting for the above matrix in the
unitization of the cone over $A_1*_BA_2$, namely a continuous path connecting the identity matrix to the above matrix.
  Such a path is not hard to find, it is enough to take
  $$
  \gamma _t =
  \begin{cases}  u_{2t}, & t\in [0,\frac{1}{2}] \\
  \vrule height 25pt width 0pt
  \big((1-Q')+ e^{(2t-1)\pi i}Q' \big) u , & t\in [\frac{1}{2} , 1]. \end{cases}
  $$
By Lemma \ref{lem:elementary-one}, we then have that
  $$
  \delta ([yx^*]_1)=\delta ([U]_1) = [\gamma _t P_0 \gamma _t^*] - [P_0]\, ,
  $$
but now observe that
$$\gamma _t P_0 \gamma _t ^* = \begin{cases}  g_1(2t), & t\in [0,\frac{1}{2}]
\\ g_2(2-2t)  , & t\in [\frac{1}{2} , 1] \, , \end{cases} $$
because, for $t\in [\frac{1}{2} ,1]$, we have
 \begin{align*}
 & \big( (1-Q')+ e^{(2t-1)\pi i}Q'\big) u  P_0 u^* \big((1-Q')+  e^{(1-2t)\pi i}Q' \big) \\
 & =  \big((1-Q')+ e^{(2t-1)\pi i}Q' \big) v  P_0 v^* \big((1-Q')+  e^{(1-2t)\pi i}Q' \big) \\
 & = \big((1-Q')+ e^{-(2-2t)\pi i} Q' \big)  P_0 \big((1-Q')+  e^{(2-2t)\pi i}Q' \big) \\
 & = v_{2-2t}P_0 v_{2-2t}^* = g_2(2-2t).
 \end{align*}
  This shows that $G_*(z)= \delta ([yx^*]_1)$, as desired.
\end{proof}

We now give the more technical statement that will be needed in the proof of Theorem \ref{thm:lambda-isomorphism}.

\begin{theorem}
 \label{thm:general-case}
 Let $e,f$ be projections in $M_{\infty} (B)$ and suppose that we have orthogonal decompositions
 $$e= e_1\oplus e_2= g_1\oplus g_2, \qquad  f= f_1\oplus f_2 = h_1\oplus h_2, $$
with $e_i,g_i,f_i,h_i\in M_{\infty} (B)$, for $i=1,2$. Assume that $x_1, y_1$ are partial isometries in
$M_{\infty}(A_1)$, and $x_2,y_2$ are partial isometries in $M_{\infty}(A_2)$ such that
$$e_i=x_ix_i^*, \quad f_i=x_i^*x_i, \qquad g_i= y_iy_i^*, \quad  h_i = y_i^*y_i, $$
for $i=1,2$. Set $x:=j_1(x_1)+j_2(x_2)$ and $y:= j_1(y_1)+j_2(y_2)$. Then the image of the partial unitary class $[yx^*]$ under the homomorphism
(\ref{eq:thomsen-map})
is precisely $$([f_1]-[e_1])-([h_1]-[g_1]) \in K_0 (B).$$
\end{theorem}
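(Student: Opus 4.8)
The plan is to reduce the general statement to the easy case already proved in Lemma \ref{lem:easy-case}, by packaging the four partial isometries into two partial isometries living in $M_\infty(A_1)$ and $M_\infty(A_2)$ respectively, and then combining them so as to isolate the ``$x_1,y_1$-part'' and the ``$x_2,y_2$-part.'' The key observation is that $x=j_1(x_1)+j_2(x_2)$ is a partial isometry with $xx^*=e$ and $x^*x=f$, and similarly $yy^*=e$, $y^*y=f$, so that $yx^*$ is genuinely a partial unitary and $[yx^*]_1$ is defined; but unlike in the easy case neither $x$ nor $y$ lies entirely in one of the $A_k$. The main point is therefore to express the class $[yx^*]$ as a \emph{difference} of two classes each of which is computable via Lemma \ref{lem:easy-case}.

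First I would exploit the orthogonal decompositions to write $yx^*$ in terms of the pieces. Since $x=j_1(x_1)+j_2(x_2)$ with $x_1\in M_\infty(A_1)$, $x_2\in M_\infty(A_2)$ and the ranges/domains are orthogonal, we have $yx^* = j_1(y_1x_1^*) + j_2(y_2 x_2^*) + (\text{cross terms})$, and the cross terms are the genuinely non-trivial objects, since $j_1(y_1)j_2(x_2)^*$ mixes the two algebras and represents exactly the kind of element whose $K_1$-class the Thomsen map detects. The cleanest route is to introduce an auxiliary partial isometry built to split $[yx^*]$ multiplicatively: choose $z:=j_1(x_1)+j_2(y_2)$, which satisfies $zz^*=e_1\oplus g_2$ and $z^*z=f_1\oplus h_2$. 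Then one checks that, up to the relation $[ab]=[a]+[b]$ for composable partial unitaries in $K_1$, one has $[yx^*]=[yz^*]+[zx^*]$ where each of $yz^*$ and $zx^*$ is now, after a block rearrangement, of the ``pure'' form handled by Lemma \ref{lem:easy-case}. Concretely $zx^* = j_1(x_1)x_1^* \oplus j_2(y_2)x_2^*$ on matching blocks reduces to an easy-case computation on the second block only (the first block being a partial unitary inside $A_1$ alone, hence trivial in the connecting map), and symmetrically for $yz^*$ on the first block.

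The heart of the argument is the additivity $[yx^*]=[yz^*]+[zx^*]$ in $K_1$ together with the fact that any partial unitary supported entirely in one algebra $A_k$ maps to $0$ under the connecting map. For the former, I would note that $yx^* = (yz^*)(zx^*)$ whenever $z^*z$ dominates the relevant domain projection, and that the product of two partial unitaries with matching initial/final projections corresponds to addition of $K_1$-classes; care is needed because $yz^*$ and $zx^*$ are partial unitaries between \emph{different} projections, so I would first pad with the appropriate orthogonal complements $1-e$, $1-f$ to turn them into honest unitaries before invoking additivity of $[\,\cdot\,]_1$ under multiplication. Applying Lemma \ref{lem:easy-case} to the single-block pieces that genuinely cross between $A_1$ and $A_2$ then yields the contributions $[f_1]-[e_1]$ from one product and $-([h_1]-[g_1])$ from the other, the signs being fixed by which of $j_1,j_2$ carries the ``$y$'' versus the ``$x$'' side in each factor.

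The main obstacle I anticipate is \emph{bookkeeping the projections and the orientation of the connecting map} rather than any deep new idea: one must track carefully which blocks pair an $A_1$-isometry against an $A_2$-isometry (these are the blocks contributing to the index) and which pair two isometries from the same algebra (contributing $0$), and one must get the four signs right so that the final answer is $([f_1]-[e_1])-([h_1]-[g_1])$ rather than some other signed combination. A secondary technical point is justifying that the decomposition $[yx^*]=[yz^*]+[zx^*]$ is legitimate at the level of partial-unitary $K_1$-classes, which requires verifying that the intermediate projection $z^*z=f_1\oplus h_2$ is compatible with both products; the safest way is to perform the whole computation inside a single application of Lemma \ref{lem:elementary-one} by writing down explicit self-adjoint unitaries $u,v$ over $A_1$ and $A_2$ (as in the proof of Lemma \ref{lem:easy-case}) whose product $vu$ has the block-diagonal form $\mathrm{diag}(U,\tilde U^*)$, and reading off the index via the path of spectral projections. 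I expect the cleanest write-up to mirror the structure of the proof of Lemma \ref{lem:easy-case} almost verbatim, with the scalar eigenvalue-$(-1)$ spectral projections replaced by block projections adapted to the decompositions $e=e_1\oplus e_2$ and $f=f_1\oplus f_2$.
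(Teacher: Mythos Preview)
Your primary strategy has a genuine gap. The auxiliary element $z:=j_1(x_1)+j_2(y_2)$ is \emph{not} a partial isometry in general: from $e=e_1\oplus e_2=g_1\oplus g_2$ you know $e_1\perp e_2$ and $g_1\perp g_2$, but nothing forces $e_1\perp g_2$ or $f_1\perp h_2$. Concretely, $j_1(x_1)j_2(y_2)^*=j_1(x_1)\,f_1h_2\,j_2(y_2^*)$ with $f_1h_2\in B$ typically nonzero, so the cross terms in $zz^*$ survive and your formulas $zz^*=e_1\oplus g_2$, $z^*z=f_1\oplus h_2$ fail. Consequently the factorization $yx^*=(yz^*)(zx^*)$ and the additivity $[yx^*]_1=[yz^*]_1+[zx^*]_1$ are not even well-posed. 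This is not a bookkeeping issue; it is exactly the obstruction that forces the paper to pass to $4\times4$ matrices.

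Your fallback (``perform the whole computation inside a single application of Lemma~\ref{lem:elementary-one} by writing down explicit self-adjoint unitaries $u,v$ over $A_1$ and $A_2$'') is precisely what the paper does, but there is one non-obvious extra ingredient you have not anticipated. In the easy case one had $uP_0u^*=vP_0v^*$, so the two half-paths in the mapping cone glue automatically. Here the paper builds $U\in M_4(A_1)$ from $x_1,y_1$ and $V\in M_4(A_2)$ from $x_2,y_2$, and one finds $P_1:=UP_0U^*\neq VP_0V^*=:P_2$; an explicit self-adjoint unitary $Z\in M_4(B)$ with $ZP_1Z=P_2$ is needed to splice the paths, and it is $VZU$ (not $VU$) that, after conjugation by a permutation matrix, has the block form $\mathrm{diag}(1-e+yx^*,\,1-e+xy^*)$ required by Lemma~\ref{lem:elementary-one}. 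Once you see that this $B$-unitary interpolation is necessary, the rest is indeed the same template as Lemma~\ref{lem:easy-case}.
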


\begin{proof}
 The proof is similar to the one of Lemma \ref{lem:easy-case}, but we need to solve some technical complications.

 We will assume that $e,f\in B$, and so $x_i,y_i\in A_i$ as well. We will suppress any reference to the maps $\iota_k$ and $j_k$.
 As in the proof of Lemma \ref{lem:easy-case}, it suffices to find $z\in K_0(C)$ such that $p_*(z)= ([f_1]-[e_1])-([g_1]-[h_1])$
 and $\delta ([yx^*]_1)= G_*(z)$.

 Write
 \begin{equation}
\label{eq:u1-and-u2-and-U}
 u_1= \begin{pmatrix} 1-e_1 & x_1 \\ x_1^* & 1-f_1 \end{pmatrix},\quad u_2 = \begin{pmatrix}
 1-h_1 & y_1^* \\ y_1 & 1-g_1 \end{pmatrix} , \,  \, \text{ and }\,\, U= \begin{pmatrix} u_1 & 0 \\ 0 & u_2
 \end{pmatrix}.
 \end{equation}
Similarly, put
\begin{equation}
\label{eq:v1-and-v2-and-V}
 v_1= \begin{pmatrix} 1-g_2 & y_2 \\ y_2^* & 1-h_2 \end{pmatrix},\quad v_2 = \begin{pmatrix}
 1-f_2 & x_2^* \\ x_2 & 1-e_2 \end{pmatrix} , \,  \, \text{ and }\,\, V= \begin{pmatrix} v_1 & 0 \\ 0 & v_2
 \end{pmatrix}.
 \end{equation}
Note that $U$ is a self-adjoint unitary in $M_4(A_1)$ and $V$ is a self-adjoint unitary in $M_4(A_2)$. Consider the following projections in $M_4(B)$:
$$P_0= \text{diag} (1,0,1,0) ,\qquad P_1= UP_0U^* ,\qquad P_2= VP_0V^* .$$
Note that the projections $P_1$ and $P_2$ can be connected in $M_4(B)$. Indeed, consider
$$Z:= \begin{pmatrix}
       1-e & 0 & 0 & e \\
       0 & 1-f & f & 0 \\
       0 & f & 1-f & 0 \\
       e & 0 & 0 & 1-e
      \end{pmatrix}.$$
      Then $Z$ is a self-adjoint unitary in $M_4(B)$ and $ZP_1Z= P_2$. There exists a path $Z_t$ of  unitaries
      in $M_4(B)$ such that $Z_0= I_4$ and $Z_1= Z$.

 Set $Q:= \frac{1-U}{2}$ and $Q':= \frac{1-V}{2}$. Consider the paths of unitaries
\begin{equation}
 \label{eq:Ut-and-Vt}
 U_t: = (1-Q) + e^{\pi it} Q ,\qquad V_t:= (1-Q')+ e^{- \pi it} Q'
\end{equation}
joining $I_2$ with $U$ and $V$ in $A_1$ and $A_2$, respectively.
Consider the projection
$$ D = (P_2, g_1 , g_2 )$$
in $M_4(\widetilde{C})$, where
$$g_1 (t) = \begin{cases} U_{2t}P_0U_{2t}^* & t\in [0,\frac{1}{2} ] \\
                          Z_{2t-1} P_1Z_{2t-1}^* & t\in [\frac{1}{2}, 1] \end{cases},\qquad \qquad
 g_2(t) = V_tP_0 V_t^* .$$
(Observe that $g_1(1)= g_2(1) = P_2$.) Set $z= [D]- [P_0]\in K_0 (C)$.
Note that
$$p_*(z)  = [P_2] - [P_0] = [P_1] - [P_0] = ([f_1] -[e_1]) -([h_1]-[g_1]) .$$
It remains to show that $\delta ([yx^*]_1) = G_*(z)$.
A computation shows that
$$VZU = \begin{pmatrix}
        1-e & 0 & y & 0 \\
        0 & 1-f & 0 & y^* \\
        x^* & 0 & 1-f & 0 \\
        0 & x & 0 & 1-e
       \end{pmatrix} .$$
Observe that exchange of the second and third rows and columns of the matrix $VZU$ gives the unitary
$$W := \text{diag} \Big( \begin{pmatrix}
                1-e & y \\
                x^* & 1-f
               \end{pmatrix}, \, \begin{pmatrix}
              1-f & y^* \\
              x & 1-e
              \end{pmatrix} \Big)  \, , $$
              and the two unitaries appearing in this formula are equivalent to $1-e+yx^*$ and $1-e + xy^*$ respectively.
In particular, we have that $\Lambda (VZU) \Lambda = W$, where $\Lambda := \begin{pmatrix} 1 & 0 & 0 & 0 \\ 0 & 0 & 1 & 0 \\ 0 & 1 & 0 & 0 \\ 0 & 0 & 0 & 1 \end{pmatrix}$
is a self-adjoint unitary scalar matrix.  Therefore, Lemma \ref{lem:elementary-one} gives that
$$\delta ([yx^*]_1)=\delta ([1-e + yx^*]_1) = \delta (\Big[ \begin{pmatrix} 1-e & y \\ x^* & 1-f \end{pmatrix} \Big] _1) =  [\tilde{\gamma} _t \text{diag}(I_2, 0_2)  \tilde{\gamma} _t^*] - [\text{diag}(I_2, 0_2)]\, ,$$
where $\tilde{\gamma} _t $ is a unitary $4\times 4$ matrix over the unitization of the cone of $A_1*_B A_2$,
such that $\tilde{\gamma} _1 = W = \Lambda (VZU) \Lambda$.

Define $\gamma _t = \Lambda  \tilde{\gamma}_t \Lambda $. Then $\gamma _t$ is a unitary $4\times 4$ matrix such that $\gamma _1 = VZU$.
  Moreover, using the above computation, we get
\begin{align*}
 \delta ([yx^*]_1) & = [\Lambda \gamma_t (\Lambda\ \text{diag}(I_2, 0_2)  \Lambda ) \gamma_t^* \Lambda ] - [\text{diag}(I_2, 0_2)] \\
 & = [\Lambda \gamma_t  P_0 \gamma _t^* \Lambda] - [P_0] \\
 & = [\gamma _t P_0 \gamma _t ^*] - [P_0]
\end{align*}
in $K_0(S(A_1*_BA_2))$. In conclusion, we get that $ \delta ([yx^*]_1) = [\gamma _t P_0\gamma_t^*]-[P_0]$, where $\gamma _t$ is any unitary $4\times 4$ matrix over the unitization
of the cone of $A_1*_BA_2$ such that $\gamma_1= VZU$.

Consider the following unitary
$$\gamma _t = \begin{cases}  U_{4t}, & t\in [0,\frac{1}{4}]
\\ Z_{4t-1}U, & t\in [\frac{1}{4}, \frac{1}{2} ] \\
\Big[(1-Q')+ e^{(2t-1)\pi i}Q' \Big] ZU , & t\in [\frac{1}{2} , 1] . \end{cases} $$
Then $\gamma _0 = I_2$, and $\gamma _1 = VZU$, and so we can use $\gamma _t$ to compute $\delta ([yx^* ]_1)$.
  Just as in \ref{lem:easy-case}, we get
$G_*(z)= [\gamma_t P_0 \gamma _t^* ] - [P_0] = \delta ([yx^*]_1)$, as desired.
\end{proof}

\section{Computation of $K_1(\mathcal O (E, C))$.}
\label{sect:compK1}

In this section, we compute $K_1(\mathcal O (E,C))$ for any finite bipartite separated graph $(E,C)$.

\smallskip

Let $(E,C)$ be a finitely separated graph. For $v,w\in E^0$ and $X\in C_v$, denote by $a_X(w,v)$ the number of arrows in $X$ from
$w$ to $v$.

We denote by $1_C\colon
\Z^{(C)}\rightarrow \Z^{(E^0)}$ and $A_{(E,C)}\colon \Z^{(C)}\rightarrow \Z^{(E^0)}$ the
homomorphisms defined by
$$1_C(\delta_X)= \delta_v \qquad\text{and}\qquad A_{(E,C)}(\delta_X)= \sum _{w\in
E^0}a_X(w,v)\delta_w \qquad\quad (v\in E^0,\;X\in C_v ),$$
where $(\delta_X)_{X\in C}$ and $(\delta_v)_{v\in E^0}$ denote the canonical basis of $\Z^{(C)}$ and
$\Z^{(E^0)}$ respectively.

With this notation, the $K$-theory of $C^*(E,C)$ has formulas which
look very similar to the ones for the non-separated case (\cite[Theorem 3.2]{RaeSzy}):

\begin{theorem} \cite[Theorem 5.2]{AG2}
\label{thm:KTHsepgraph} Let $(E,C)$ be a finitely separated graph,
and adopt the notation above. Then the $K$-theory of $C^*(E,C)$ is
given as follows:
\begin{align}
K_0(C^*(E,C)) &\cong \coker \bigl( 1_C-A_{(E,C)}\colon \Z^{(C)}\longrightarrow \Z^{(E^0)} \bigr),  \label{eq:K0THCsep} \\
K_1(C^*(E,C)) &\cong \ker \bigl( 1_C-A_{(E,C)}\colon \Z^{(C)}\longrightarrow \Z^{(E^0)} \bigr).  \label{eq:K1THCsep}
\end{align}
\end{theorem}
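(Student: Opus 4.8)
The plan is to reduce to a finite graph and then to realize $C^*(E,C)$ as a full amalgamated free product of \emph{ordinary} (non-separated) graph C*-algebras, to which the six-term sequence of Theorem \ref{thm:thomsen} applies. Since every $X\in C$ is finite, $(E,C)$ is a direct limit, in the category of \cite{AG}, of its finite complete separated subgraphs $(E_\lambda,C_\lambda)$, and $C^*(E,C)=\varinjlim C^*(E_\lambda,C_\lambda)$. Continuity of $K$-theory under direct limits, together with the facts that $\Z^{(C)}$, $\Z^{(E^0)}$ and the map $1_C-A_{(E,C)}$ are the corresponding direct limits and that filtered colimits in the category of abelian groups are exact (so both $\ker$ and $\coker$ pass to the limit), lets me assume henceforth that $E^0$ and $E^1$ are finite. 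The target is then a single four-term exact sequence
$$
0\to K_1(C^*(E,C))\to \Z^{(C)}\overset{1_C-A_{(E,C)}}{\longrightarrow}\Z^{(E^0)}\to K_0(C^*(E,C))\to 0,
$$
which yields both formulas at once.

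The structural heart of the argument is the identification
$$
C^*(E,C)\;\cong\;\ast_{B}\{\,A_X : X\in C\,\},
$$
the full amalgamated free product over $B:=C^*(E^0)\cong\C^{E^0}$, where for $X\in C_v$ the algebra $A_X:=C^*(E_X)$ is the non-separated graph C*-algebra of the graph $E_X$ with vertex set $E^0$ and edge set $X$ (the vertices outside $\{v\}\cup s(X)$ being isolated). I would prove this isomorphism by comparing universal properties: both algebras are generated by $E^0\cup E^1$, each inclusion $B\hookrightarrow A_X$ is unital, and the defining relations (V), (E), (SCK1), (SCK2) of $C^*(E,C)$ are, apart from the vertex relations carried by $B$, entirely internal to each block $X\in C$, with no relation linking edges of different blocks; this is exactly what amalgamation over $B$ imposes. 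Each $A_X$ is an ordinary graph algebra, so by \cite[Theorem 3.2]{RaeSzy} we have $K_0(A_X)\cong\Z^{(E^0)}/\langle r_X\rangle$ and $K_1(A_X)\cong\ker(\Z\overset{r_X}{\to}\Z^{(E^0)})$, where $r_X:=(1_C-A_{(E,C)})(\delta_X)=\delta_v-\sum_w a_X(w,v)\delta_w$; moreover $\iota_{X*}\colon K_0(B)=\Z^{(E^0)}\to K_0(A_X)$ is the quotient map.

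Next I would feed this into Theorem \ref{thm:thomsen}, applied iteratively to $A^{(k)}=A_{X_1}\ast_B\cdots\ast_B A_{X_k}$. Because $B$ is finite-dimensional and commutative, $K_1(B)=0$, so the connecting map $K_0(A^{(k)})\to K_1(B)$ vanishes and the $K_0$-row collapses to the exact sequence $K_0(B)\to K_0(A^{(k-1)})\oplus K_0(A_{X_k})\to K_0(A^{(k)})\to 0$. An easy induction using the diagonal copy of $K_0(B)$ gives $K_0(A^{(k)})\cong\Z^{(E^0)}/\langle r_{X_1},\dots,r_{X_k}\rangle$, whence in the limit $K_0(C^*(E,C))\cong\Z^{(E^0)}/\langle r_X:X\in C\rangle=\coker(1_C-A_{(E,C)})$, proving (\ref{eq:K0THCsep}). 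For $K_1$, the same six-term sequence (again with $K_1(B)=0$) exhibits $K_1(C^*(E,C))$ as assembled from the groups $\ker\!\big(K_0(B)\to\bigoplus_X K_0(A_X)\big)$ through the connecting (index) maps, whose explicit form is supplied by \cite{Thomsen} and by Theorem \ref{thm:general-case}.

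The hard part is precisely this last identification for $K_1$: one must show that, once all the iterated connecting maps are assembled, the resulting homomorphism is exactly the inclusion $\ker(1_C-A_{(E,C)})\hookrightarrow\Z^{(C)}$ sitting in the displayed four-term sequence — equivalently, that the several index maps produced by Theorem \ref{thm:general-case} organize, block by block over $X\in C$, into the single map $1_C-A_{(E,C)}\colon\Z^{(C)}\to\Z^{(E^0)}$. This is a bookkeeping computation of boundary maps, and it is where the explicit index-map formula of Section \ref{sect:partialunitaries} does the real work; as a bonus it should yield the concrete isomorphism $\ker(1_C-A_{(E,C)})\cong K_1(C^*(E,C))$ advertised in the introduction. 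Everything else is a routine assembly of the six-term sequences.
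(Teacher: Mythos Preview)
The paper does not actually prove this theorem: it is quoted verbatim from \cite[Theorem 5.2]{AG2} and no argument is given here. So there is no ``paper's own proof'' to compare against directly. That said, your outline is essentially the strategy of \cite{AG2}, as one can infer from the way the present paper uses that result (see the proof of Theorem~\ref{thm:lambda-isomorphism}, which invokes the amalgamated free-product decomposition $C^*(E,C)=C^*(E',C')*_B C^*(E_{\{X\}},\{X\})$ and refers explicitly to ``formula (5.9) in \cite{AG2}'' for the inductive structure of $K_1$). Your reduction to finite graphs via direct limits, the identification $C^*(E,C)\cong \ast_B\{A_X:X\in C\}$ over $B=\C^{E^0}$, and the iterated use of Thomsen's sequence with $K_1(B)=0$ are all correct, and your $K_0$ induction is sound: the pushout of $\Z^{E^0}/\langle r_{X_1},\dots,r_{X_{k-1}}\rangle \leftarrow \Z^{E^0}\to \Z^{E^0}/\langle r_{X_k}\rangle$ is indeed $\Z^{E^0}/\langle r_{X_1},\dots,r_{X_k}\rangle$.

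One caution on the $K_1$ step. You propose that the index-map formulas of Section~\ref{sect:partialunitaries} ``do the real work'' in identifying $K_1(C^*(E,C))$ with $\ker(1_C-A_{(E,C)})$. In this paper those formulas are deployed in Theorem~\ref{thm:lambda-isomorphism} to show that the \emph{explicit} map $\lambda_{(E,C)}$ is an isomorphism; but the surjectivity argument there explicitly invokes the structure of the proof in \cite{AG2} (the decomposition $K_1(C^*(E,C))=K_1(C^*(E',C'))\oplus H$ with $H$ cyclic). So Section~\ref{sect:partialunitaries} by itself does not give an independent proof of the $K_1$ formula; you would still need to carry out the inductive bookkeeping showing that the successive kernels $\langle r_{X_1},\dots,r_{X_{k-1}}\rangle\cap \langle r_{X_k}\rangle$ assemble into $\ker(1_C-A_{(E,C)})$, which is exactly the content of the argument in \cite{AG2} that this paper takes for granted.
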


Many well known results on the computation of $K$-theory groups for algebras related to graphs involve the {\it
transpose} of the adjacency matrix, contrary to our result above.  The appearance or not of the transpose in such
formulas is the consequence of two key choices of convention: the direction of the arrows in the graph (this has
undergone a major change in the literature in recent years mostly to make the {\it source} and {\it range} of the edges
in the graph match the {\it initial} and {\it final} spaces of the corresponding generating partial isometries) and the
definition of the adjacency matrix itself.  For example, the appearance of the transpose in \cite[Theorem 3.2]{RaeSzy} is
the consequence of choosing the old convention for the direction of arrows in the graph and choosing the indexing of the
adjacency matrix in such a way that the $(v,w)$ entry corresponds to the number of arrows from $v$ to $w$.  Each change
in these conventions has the effect of toggling the appearance/non-appearance of the transpose, such as in
\cite{Raeburn}, where the transpose remained due to both conventions being reversed.  Our convention for the direction
of arrows is the modern one, namely the same as in \cite{Raeburn}, while we kept the convention for indexing the
adjacency matrix from \cite{RaeSzy}, hence there is no transpose in the matrix $A_{(E,C)}$ in the above formulas.

  The isomorphism in (\ref{eq:K0THCsep}) is given explicitly in \cite{AG2},
but this is not the case for the isomorphism in (\ref{eq:K1THCsep}). We will obtain such an explicit isomorphism in this section.

Using this, we will show the following result:

\begin{theorem}
 \label{thm:K1computed} Let $(E,C)$ be a finite bipartite separated graph.
  The natural map $C^*(E, C)\to \mathcal O (E,C)$ induces an isomorphism
  $K_1(C^*(E,C))\to K_1(\mathcal O (E,C))$. Consequently,
  $$K_1(\mathcal O (E, C)) \cong \ker (1_C-A_{(E,C)}).$$
\end{theorem}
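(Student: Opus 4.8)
The plan is to reduce everything to a single-step analysis of the approximating tower and then invoke continuity of $K$-theory. Since $\mathcal O(E,C)=\varinjlim A_n$ with $A_n=C^*(E_n,C^n)$ and surjective connecting maps $\phi_n\colon A_n\twoheadrightarrow A_{n+1}$ (Theorem \ref{thm:algebras}), and since $K_1$ commutes with sequential inductive limits, we get $K_1(\mathcal O(E,C))\cong\varinjlim K_1(A_n)$, with $\pi\colon C^*(E,C)=A_0\to\mathcal O(E,C)$ realized as the canonical map into this limit. Hence it suffices to prove that each $K_1(\phi_n)\colon K_1(A_n)\to K_1(A_{n+1})$ is an isomorphism: the inductive limit of a tower of isomorphisms is computed by its initial term, so $K_1(\pi)$ is then an isomorphism, and the final assertion $K_1(\mathcal O(E,C))\cong\ker(1_C-A_{(E,C)})$ follows from Theorem \ref{thm:KTHsepgraph}, made explicit through the isomorphism $\lambda$ of Theorem \ref{thm:lambda-isomorphism}.

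To handle a single step I would feed the short exact sequence $0\to I_n\to A_n\xrightarrow{\phi_n}A_{n+1}\to 0$ of Theorem \ref{thm:algebras}(a) into the six-term exact sequence of $K$-theory. The $K_0$ input is already available: by Theorem \ref{thm:K0C(E,C)} and Lemma \ref{lem:K0multires}, $K_0(\phi_n)$ is a split monomorphism whose cokernel $H_n\cong\Z^{W_{n+2}}$ is free abelian. Chasing this through the six-term sequence forces $\iota_*\colon K_0(I_n)\to K_0(A_n)$ to vanish, the boundary $K_1(A_{n+1})\to K_0(I_n)$ to be surjective, and the boundary $K_0(A_{n+1})\to K_1(I_n)$ to have image isomorphic to $H_n$ and equal to $\ker\bigl(\iota_*\colon K_1(I_n)\to K_1(A_n)\bigr)$. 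Consequently $K_1(\phi_n)$ is injective exactly when $\iota_*$ vanishes on $K_1(I_n)$, i.e.\ when $K_1(I_n)\cong H_n$, and it is surjective exactly when $K_0(I_n)=0$.

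Thus the whole theorem reduces to the single computation $K_0(I_n)=0$ and $K_1(I_n)\cong H_n$ (free abelian of rank $|W_{n+2}|$), and I expect this to be the main obstacle. The commutator ideal $I_n$ is generated by the $[ee^*,ff^*]$ with $e,f$ lying in distinct members of a common $C_v$ (commutators across different range vertices or within one partition member vanish automatically). In the smallest nontrivial case, two two-element sets at one vertex, the relevant range projections satisfy the relations of the universal C*-algebra of two projections, whose commutator ideal is a suspended matrix algebra $C_0((0,1))\otimes M_2$, with $K_0=0$ and $K_1\cong\Z$ --- matching $H_0\cong\Z$ precisely, since $\prod|X_i|-\sum|X_i|+k-1=1$ here. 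The task is to promote this to a general structural description of $I_n$ as, stably, a direct sum indexed by the vertices with $|C_v|\ge 2$ of suspension-type (or continuous-trace) pieces, and to match the rank of $K_1(I_n)$ against $|W_{n+2}|$ using the very count that appears in Lemma \ref{lem:refin-exactsequence}.

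This is where the machinery of Section \ref{sect:partialunitaries} should enter. Writing $A_n$ as an amalgamated free product over the finite-dimensional algebra $B$ generated by the vertices, the index-map computation of Theorem \ref{thm:general-case} produces explicit partial-unitary generators of the form $[yx^*]$ for $K_1(A_n)$ and computes their images in $K_0(B)$; this both identifies the generators of $K_1(I_n)$ with the free basis of $H_n$ and furnishes the explicit isomorphism with $\ker(1_C-A_{(E,C)})$. With $K_*(I_n)$ so determined, the six-term sequence yields that every $K_1(\phi_n)$ is an isomorphism, and the inductive-limit argument of the first paragraph completes the proof.
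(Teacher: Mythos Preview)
Your reduction to showing that each $K_1(\phi_n)$ is an isomorphism is exactly the paper's overall strategy, and your six-term-sequence bookkeeping is correct: given that $K_0(\phi_n)$ is a split monomorphism with free cokernel $H_n\cong\Z^{W_{n+2}}$, it would indeed suffice to know $K_0(I_n)=0$ and $K_1(I_n)$ free of rank $|W_{n+2}|$ (the finite-rank, torsion-free setting makes your ``i.e.'' legitimate).

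The gap is that you do not actually compute $K_*(I_n)$, and there is no evident route to doing so. The two-projection toy case is special: the ambient algebra has an explicit subhomogeneous model in which the commutator ideal is visibly $C_0(0,1)\otimes M_2$. For general $(E_n,C^n)$ the ideal $I_n$ sits inside an amalgamated free product with no comparable description, and ``promoting'' the $2\times2$ picture to a direct sum of suspension-type pieces is exactly the hard content you are trying to bypass. Your final paragraph also misreads the role of Section~\ref{sect:partialunitaries}: Theorem~\ref{thm:general-case} computes the Thomsen connecting map $K_1(A_1*_BA_2)\to K_0(B)$ for the amalgamated-free-product decomposition of $A_n$ over $B=C(E_n^0)$; it says nothing about $I_n$ or its $K$-theory, and it does not produce generators of $K_1(I_n)$.

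The paper avoids $I_n$ altogether. It constructs an explicit homomorphism $\lambda_{(E,C)}\colon\ker(1_C-A_{(E,C)})\to K_1(C^*(E,C))$ by assigning to each kernel element a concrete partial unitary $Z\sigma(T)^*$, proves that $\lambda_{(E,C)}$ is an isomorphism (Theorem~\ref{thm:lambda-isomorphism}) using the index computation of Section~\ref{sect:partialunitaries} for injectivity and an induction on $|C|$ for surjectivity, and then verifies (Proposition~\ref{prop:commuting-diagram}) that the square
\[
\begin{CD}
\ker(1_C-A_{(E,C)}) @>{\lambda_{(E,C)}}>> K_1(C^*(E,C))\\
@V{\Phi}V{\cong}V @VV{K_1(\phi_0)}V\\
\ker(1_D-A_{(F,D)}) @>{\lambda_{(F,D)}}>> K_1(C^*(F,D))
\end{CD}
\]
commutes, where $(F,D)=(E_1,C^1)$ and $\Phi$ is the combinatorial isomorphism of Lemma~\ref{lem:twokeridnetified}. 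Three sides being isomorphisms forces $K_1(\phi_0)$ to be one; iterating over $n$ and passing to the limit gives Theorem~\ref{thm:main-K1-first}, which is the enhanced form of the statement you are proving. So the machinery you point to is indeed the key, but it is used to build and control $\lambda$, not to analyse the kernel ideal.
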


\medskip

To show this result, it is enough to prove that, for any finite bipartite separated graph $(E,C)$,
the natural map $\phi_0\colon C^*(E,C)\to C^*(E_1, C^1)$ induces an isomorphism
$$K_1(\phi_0)\colon K_1(C^*(E,C))\to K_1(C^*(E_1,C^1)),$$
where $(E_1,C^1)$ is the first of the infinite collection of separated graphs $(E_n, C^n)$ associated to $(E,C)$
  (see Construction \ref{cons:complete-multiresolution}(c)).

We start by fixing some notation. Let $(E,C)$ be a finite bipartite separated graph. We will denote
$(F,D):= (E_1, C^1)$.  Then $F^{0,0}= E^{0,1}$ and $F^{0,1} = \bigsqcup _{u\in E^{0,0}} F^{0,1}_u$, where
$F^{0,1}_u$ is the set of all vertices $v(x^u_1,\dots ,x^u_{k_u})$ for $x^u_i\in X^u_i$, $i=1,\dots , k_u$,
being $C_u= \{ X^u_1,\dots , X^u_{k_u} \}$,  for any $u\in E^{0,0}$.

For $w\in F^{0,0}= E^{0,1}$, the set $D_w$ can be identified with $s_E^{-1}(w)$ (see Definition \ref{multiresatsetofvs}), so that the set $D$ can be identified with
$E^1$:
$$\Z^D = \Z ^{E^1} =\bigoplus _{u\in E^{0,0}} \Z ^{|X^u_1|+\dots + |X^u_{k_u}|}.$$
We will denote by $\mathfrak D _u = \{ b(x^u_i)\mid x^u_i\in X^u_i, i=1,\dots , k_u \}$ a basis of
$\Z ^{|X^u_1|+\dots + |X^u_{k_u}|}$, so that $\mathfrak D =\bigsqcup _{u\in E^{0,0}} \mathfrak D _u$ is a basis of
$\Z ^D$.

\medskip

On the other hand, $\Z ^{F^0} = \Z^{F^{0,0}} \oplus \Z^{F^{0,1}}$. We consider a basis
$$\{ a(x^u_1,\dots , x^u_{k_u}) \mid x^u_i \in X^u_i, u\in E^{0,0} \} $$
for $\Z ^{F^{0,1}}$.

For $u\in E^{0,0}$ and $i=2,\dots , k_u$, set $\gamma ^u_i= \sum_{x^u_1\in X^u_1} b(x^u_1) - \sum _{x^u_i\in X^u _i} b(x^u_i)$.
Let $Z_1$ be the subgroup of $\Z^D$ generated by the elements $\gamma^u _i$, for $i=2,\dots ,k_u$, $u\in E^{0,0}$.
The map $\Psi \colon  \Z^D\to \Z^{F^{0,1}}$ given by
$$\Psi (b(x^u_i)) = \sum _{X_1^u\times \dots \times X_{i-1}^u\times X_{i+1}^u\times \dots \times X_{k_u}^u}
a(x^u_1,\dots x_{i-1}^u,x^u_i,x^u_{i+1},\dots , x^u_{k_u})  $$
is clearly related to the map $G(\psi )$ considered in Lemma \ref{lem:refin-exactsequence}.

By the proof of Lemma \ref{lem:refin-exactsequence}, we have that $Z_1= \ker (\Psi )$, and that $Z_1$ is a free subgroup of $\Z^D$ with free basis
given by the elements $\gamma^u _i$, for $i=2,\dots ,k_u$, $u\in E^{0,0}$. We have
$$\Z^D= Z_1\oplus Z_2,$$
and $\Psi $ induces an isomorphism from $Z_2$ onto its image.

\medskip

Observe that the map $\Psi $ can be identified with the map $A_{(F,D)}$.
We obtain:

\begin{lemma}
 \label{lem:keridentified}
 Let $(E,C)$ be a finite bipartite separated graph. With the above notation, we have
 $$\ker (1_D-A_{(F,D)}) = \ker (s_{Z_1})\, ,$$
 where $s_{Z_1}\colon Z_1\to \Z^{E^{0,1}}$ is the restriction to $Z_1$ of the map $s_{\Z^D}\colon \Z^D\to \Z^{E^{0,1}}$
 defined by $s_{\Z^D}(b(x)) = \delta _{s(x)}$ for $x\in E^1$.
\end{lemma}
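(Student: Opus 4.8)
The plan is to exploit the bipartite structure of $(F,D)$ to split the map $1_D - A_{(F,D)}\colon \Z^D \to \Z^{F^0}$ into two components landing in the complementary summands of $\Z^{F^0} = \Z^{F^{0,0}} \oplus \Z^{F^{0,1}}$, after which the kernel of the difference decouples into the intersection of the kernels of the two components. The underlying reason is that every group $X \in D = C^1$ sits at a vertex of $F^{0,0} = E^{0,1}$, since the new vertices in $F^{0,1}$ are sources of $F$ and carry no groups; hence $1_D$ takes values in $\Z^{F^{0,0}}$, while the arrows constituting each such group all have their sources in $F^{0,1}$, so $A_{(F,D)}$ takes values in $\Z^{F^{0,1}}$.

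First I would verify the two map identifications. The group $X(x_i^u) \in (C^1)_w$ indexed by the basis element $b(x_i^u)$ lives at the vertex $w = s_E(x_i^u)$, so $1_D(b(x_i^u)) = \delta_{s_E(x_i^u)}$, which is precisely the defining formula of $s_{\Z^D}$; thus $1_D = s_{\Z^D}$ as a map into $\Z^{F^{0,0}} = \Z^{E^{0,1}}$. On the other side, the arrows comprising $X(x_i^u)$ are the $\alpha^{x_i}(x_1^u,\dots,\widehat{x_i^u},\dots,x_{k_u}^u)$, one for each choice of the remaining coordinates, and each has source the new vertex $v(x_1^u,\dots,x_{k_u}^u)$; reading off $A_{(F,D)}(b(x_i^u))$ therefore reproduces the formula defining $\Psi(b(x_i^u))$ verbatim, so $A_{(F,D)} = \Psi$ as a map into $\Z^{F^{0,1}}$ (the identification already recorded just above the statement).

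With these in hand, for $\xi \in \Z^D$ we have $(1_D - A_{(F,D)})(\xi) = (s_{\Z^D}(\xi), -\Psi(\xi))$ in $\Z^{F^{0,0}} \oplus \Z^{F^{0,1}}$, so $\xi$ lies in $\ker(1_D - A_{(F,D)})$ exactly when both $\Psi(\xi) = 0$ and $s_{\Z^D}(\xi) = 0$. By the proof of Lemma \ref{lem:refin-exactsequence} the first condition says $\xi \in \ker(\Psi) = Z_1$, and then the second says $s_{Z_1}(\xi) = 0$; hence $\ker(1_D - A_{(F,D)}) = Z_1 \cap \ker(s_{\Z^D}) = \ker(s_{Z_1})$, as claimed. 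I do not expect a genuine obstacle: all the content sits in the bipartite splitting, which forces the two pieces of $1_D - A_{(F,D)}$ into complementary direct summands and thereby decouples the kernel computation. The only point demanding care is the bookkeeping---confirming that the indexing of the groups of $(C^1)$ by the edges of $E$ (which gives $D \cong E^1$) is matched correctly with the two identifications, and that no group of $D$ is attached to a vertex of $F^{0,1}$.
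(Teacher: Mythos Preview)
Your proof is correct and follows essentially the same approach as the paper: both identify $1_D$ with $s_{\Z^D}$ landing in $\Z^{F^{0,0}}$ and $A_{(F,D)}$ with $\Psi$ landing in $\Z^{F^{0,1}}$, then use that these components sit in complementary summands so that the kernel is $\ker(\Psi)\cap\ker(s_{\Z^D})=Z_1\cap\ker(s_{\Z^D})=\ker(s_{Z_1})$. The paper phrases the final step via the block matrix decomposition relative to $\Z^D=Z_1\oplus Z_2$, but the content is identical.
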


\begin{proof}
 We have to compute the kernel of the map $1_D-A_{(F,D)}\colon \Z^D=Z_1\oplus Z_2  \to \Z^{F^0}= \Z^{F^{0,0}}\oplus \Z^{F^{0,1}}$.
 Note that $1_D$ takes its values on $\Z^{F^{0,0}}= \Z^{E^{0,1}}$ and can be identified with the ``source map'' $s_{\Z^D}$.
 On the other hand the map $A_{(F,D)}$ takes all its values on $\Z^{F^{0,1}}$ and can be identified with the map
 $\Psi$ described above. Since $Z_1= \ker (\Psi)$, the map $1_D- A_{(F,D)}$ decomposes as
 $$ \begin{pmatrix}
     s_{Z_1} & s_{Z_2} \\ 0 & - \Psi|_{Z_2}
    \end{pmatrix}\colon Z_1\oplus Z_2\longrightarrow \Z^{F^{0,0}}\oplus \Z^{F^{0,1}} , $$
    where $s_{Z_2}$ is the restriction of $s_{\Z^D}$ to $Z_2$. Since $\Psi |_{Z_2}$ is injective we obtain that
$\ker (1_D-A_{(F,D)})= \ker (s_{Z_1})$, as desired.
\end{proof}

\begin{lemma}
 \label{lem:twokeridnetified}
 Let $(E,C)$ be a finite bipartite separated graph. With the above notation, we have a natural isomorphism
 $$\ker (1_C-A_{(E,C)}) \overset{\Phi}{\cong}  \ker (1_D-A_{(F,D)}) .$$
  \end{lemma}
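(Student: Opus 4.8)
The plan is to produce the isomorphism $\Phi$ by exploiting the bipartite structure of $(E,C)$, which forces $1_C$ and $A_{(E,C)}$ to land in complementary summands of $\Z^{(E^0)}$. Since $E^0=E^{0,0}\sqcup E^{0,1}$ with $r(E^1)=E^{0,0}$ and $s(E^1)=E^{0,1}$, and since $C=\bigsqcup_{u\in E^{0,0}}C_u$, the map $1_C$ takes values in $\Z^{E^{0,0}}$ while $A_{(E,C)}$ takes values in $\Z^{E^{0,1}}$. Consequently $(1_C-A_{(E,C)})(\xi)=0$ if and only if $1_C(\xi)=0$ and $A_{(E,C)}(\xi)=0$, so that
$$\ker(1_C-A_{(E,C)}) = \ker(1_C)\cap\ker(A_{(E,C)}) = \ker\bigl(A_{(E,C)}|_{\ker(1_C)}\bigr).$$

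First I would pin down a free basis of $\ker(1_C)$. Writing $\xi=\sum_{u\in E^{0,0}}\sum_{i=1}^{k_u}c_i^u\,\delta_{X_i^u}$, the condition $1_C(\xi)=0$ reads $\sum_{i=1}^{k_u}c_i^u=0$ for each $u$; hence the elements $\delta_{X_1^u}-\delta_{X_i^u}$, for $i=2,\dots,k_u$ and $u\in E^{0,0}$, form a free basis of $\ker(1_C)$. On the other side, recall that $Z_1\subseteq\Z^D$ is free with basis $\{\gamma_i^u\}$ indexed by the very same set $\{(u,i):i=2,\dots,k_u,\ u\in E^{0,0}\}$. I would therefore define $\Theta\colon\ker(1_C)\to Z_1$ by $\Theta(\delta_{X_1^u}-\delta_{X_i^u})=\gamma_i^u$; this is an isomorphism of free abelian groups, being a bijection between bases.

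The crux of the argument is the compatibility of $\Theta$ with the source maps. Using $A_{(E,C)}(\delta_{X_j^u})=\sum_{x\in X_j^u}\delta_{s(x)}$ and $s_{\Z^D}(b(x))=\delta_{s(x)}$, I would compute
$$s_{\Z^D}(\gamma_i^u)=\sum_{x\in X_1^u}\delta_{s(x)}-\sum_{x\in X_i^u}\delta_{s(x)} = A_{(E,C)}\bigl(\delta_{X_1^u}-\delta_{X_i^u}\bigr),$$
after the identification $\Z^{F^{0,0}}=\Z^{E^{0,1}}$. This says precisely that $s_{Z_1}\circ\Theta=A_{(E,C)}|_{\ker(1_C)}$, so $\Theta$ carries $\ker\bigl(A_{(E,C)}|_{\ker(1_C)}\bigr)$ isomorphically onto $\ker(s_{Z_1})$. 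Combining this with the displayed description of $\ker(1_C-A_{(E,C)})$ above and with Lemma \ref{lem:keridentified}, which gives $\ker(s_{Z_1})=\ker(1_D-A_{(F,D)})$, I obtain the desired isomorphism $\Phi$ as the restriction of $\Theta$.

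I expect the only genuinely delicate point to be bookkeeping: keeping the identifications $\Z^D=\Z^{E^1}$, $\Z^{F^{0,0}}=\Z^{E^{0,1}}$, and the decomposition $C=\bigsqcup_u C_u$ consistent throughout, and in particular making sure that the source map $s_{\Z^D}$ on the refinement $(F,D)$ matches the incidence matrix $A_{(E,C)}$ of the original graph on the nose. Everything else is a matching of bases, and the naturality of $\Phi$ is immediate since $\Theta$ is defined canonically from the combinatorics of $(E,C)$.
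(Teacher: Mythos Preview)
Your argument is correct and is essentially the paper's proof, repackaged: both reduce to Lemma~\ref{lem:keridentified} and match the basis $\{\delta_{X_1^u}-\delta_{X_i^u}\}$ of $\ker(1_C)$ with the basis $\{\gamma_i^u\}$ of $Z_1$, the only difference being that the paper writes down $\Phi$ by an explicit formula on coefficients and checks an inverse, while you phrase the same computation as an intertwining $s_{Z_1}\circ\Theta=A_{(E,C)}|_{\ker(1_C)}$. Note that your $\Theta$ restricted to $\ker(1_C-A_{(E,C)})$ is $-\Phi$ in the paper's normalization (the paper sets $\Phi(\sum_i n_i^u\delta_{X_i^u})=\sum_{i\ge2}n_i^u\gamma_i^u$), which is harmless for the present lemma but matters if you go on to verify the commuting diagram of Proposition~\ref{prop:commuting-diagram}.
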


\begin{proof}
 By Lemma \ref{lem:keridentified}, it suffices to establish an isomorphism
 $$\Phi  \colon \ker (1_C-A_{(E,C)}) \longrightarrow \ker (s_{Z_1}).$$
 Recall that $Z_1= \bigoplus_{u\in E^{0,0}} \bigoplus _{i=2}^{k_u} \gamma ^u_i\Z$.

 \smallskip

For $x= \sum _{u\in E^{0,0}} \sum _{i=1}^{k_u} n_i^u \delta _{X^u_i}\in \ker (1_C-A_{(E,C)})$, define
\begin{equation}
\label{eq:defintion-of-Phi}
\Phi (x) = \sum _{u\in E^{0,0}} \sum _{i=2}^{k_u} n^u_i \gamma _i^u.
\end{equation}
 We show that this is well-defined, that is, that $\sum _{u\in E^{0,0}} \sum _{i=2}^{k_u} n^u_i \gamma _i^u\in \ker (s_{Z_1})$.
 Since $x$ belongs to $\ker (1_C-A_{(E,C)})$, we have $\sum _{i=1}^{k_u} n_i^u= 0$ for all $u\in E^{0,0}$.
We also have $$- \sum _{u\in E^{0,0}}\sum _{i=1}^{k_u}  n_i^u a_{X_i^u} (w,u) =0$$
  for all $w\in E^{0,1}$. Substituting $n^u_1$ by $-\sum _{i=2}^{k_u} n_i^u$ gives
$$\sum _{u\in E^{0,0}} \sum _{i=2}^{k_u} n_i^u\Big ( a_{X_1^u}(w,u) - a_{X_i^u}(w,u)\Big) = 0$$
for all $w\in E^{0,1}$, which in turn gives that $ \sum _{u\in E^{0,0}} \sum _{i=2}^{k_u} n^u_i \gamma _i^u\in \ker (s_{Z_1})$.
Clearly $\Phi $ is a group homomorphism. The map
$\Upsilon\colon \ker (s_{Z_1}) \to \ker (1_C-A_{(E,C)})$ defined by
$$\Upsilon (\sum _{u\in E^{0,0}}\sum _{i=2}^{k_u} n_i^u \gamma _i^u)= \sum _{u\in E^{0,0}}\sum _{i=1}^{k_u} n_i^u \delta _{X_i^u}\, ,$$
where $n_1^u:= -\sum _{i=2}^{k_u} n_i^u$ gives the inverse of $\Phi$, so we have showed that $\Phi $ is an isomorphism.
 \end{proof}

 Observe that Lemma \ref{lem:twokeridnetified} and \cite[Theorem 5.2]{AG2} already give an isomorphism $K_1(C^*(E,C))\cong K_1(C^*(E_1, C^1))$.
 (Recall that $(F,D)=(E_1,C^1)$.) However, we need the fact that the natural surjection $\phi_0\colon C^*(E,C)\to C^*(E_1,C^1)$ induces a $K_1$-isomorphism.
 In order to obtain this, we are going to describe now an explicit isomorphism $\lambda_{(E,C)}\colon \text{ker}(1_C-A_{(E,C)}) \to K_1(C^*(E,C))$ for any finite bipartite separated graph $(E,C)$.
 This is interesting in its own sake, since it enables us to compute specific elements in $K_1(C^*(E,C))$.

Let $(E,C)$ be a finite bipartite separated graph. Let
\begin{equation}
\label{eq:x-in-the-ker}
x= \sum _{u\in E^{0,0}} \sum _{i=1}^{k_u} n_i^u \delta _{X_i^u} - \sum _{u\in E^{0,0}} \sum_{j=1}^{k_u} m_j^u \delta _{X_j^u}
\end{equation}
be an element in the kernel of $1_C- A_{(E,C)}$, where $n_i^u, m_i^u $ are non-negative integers and $n_i^um_i^u= 0$ for all $u,i$.
This means exactly that
\begin{equation}
\label{eq:sums-ranges}
\sum _{i=1}^{k_u} n_i^u =\sum _{j=1}^{k_u} m_j^u \qquad (\forall u\in E^{0,0})
\end{equation}
and
\begin{equation}
\label{eq:sums-sources}
 \sum_{u\in E^{0,0}} \sum _{i=1}^{k_u}  n_i^u a_{X_i^u} (w,u) =   \sum_{u\in E^{0,0}} \sum _{j=1}^{k_u}  m_j^u a_{X_j^u} (w,u) \qquad (\forall w\in E^{0,1}) .
\end{equation}
Let us denote by $N_w$ the number appearing in (\ref{eq:sums-sources}), for $w\in E^{0,1}$.

We define a matrix $Z$, whose rows are labeled by the set
$$\mathcal R _1 = \bigsqcup_{u,i} \{ X_i^u \}\times [1,n_i^u] , $$
where $[1,n]:=\{ 1,\dots ,n \}$, and the indices range over all $u\in E^{0,0}$, $i\in [1, k_u]$, such that $n_i^u\ge 1$ and whose columns are labeled by the
set
$$\mathcal C _1 = \bigsqcup _{w, u,i} \{ X_i^u \} \times [1, n_i^u] \times \{ w \} \times [1, a_{X_i^u}(w,u)] , $$
where the indices range over all $w\in E^{0,1}$, $u\in E^{0,0}$, $i\in [1, k_u]$, such that $n_i^u\ge 1$ and $a_{X_i^u} (w,u)\ge 1$.
The matrix $Z$ is associated to ``the positive part'' $\sum _{u,i} n_i^u \delta _{X_i^u}$ of $x$.
Given $w\in E^{0,1}$, $u\in E^{0,0}$ and $X_i^u\in C_u$, choose an ordering $z_1,\dots , z_{a_{X_i^u}(w,u)}$ of the set of arrows from $X_i^u$ going from $w$ to $u$.
The matrix $Z$ is the unique $\mathcal R _1\times \mathcal C_1$-matrix such that, for each $w\in E^{0,1}$, $u\in E^{0,0}$ and $X_i^u\in C_u$, the column labeled
by $(X_i^u, t, w, s)$ has a unique nonzero entry, and this nonzero entry is precisely $z_s$ in row $(X_i^u, t)$. In other words, the only nonzero entries of the row
labeled $(X_i^u,t)$, for $t\in [1,n_i^u]$, are precisely the edges from $X_i^u$ and these are distributed in the columns corresponding to $(X_i^u,t)$, their source vertex $w$ and the
ordering fixed on the sets of arrows from $X_i^u$ going from $w$ to $u$. With this description, it is clear that
\begin{equation}
\label{eq:ZZStar-and-ZstarZ}
ZZ^* =   \bigoplus _{u\in E^{0,0}} (\sum _{i=1}^{k_u} n_i^u)  \cdot u , \qquad Z^*Z = \bigoplus_{w\in E^{0,1}} N_w \cdot w .
\end{equation}
Similarly, we may associate a $\mathcal R _2\times \mathcal C _2$-matrix  $T$ to the ``negative part'' $\sum _{u\in E^{0,0}} \sum_{j=1}^{k_u} m_j^u \delta _{X_j^u}$
of $x$. The rows and columns do not match exactly, but they match after we apply a bijection. More concretely, we fix two bijections
$$\sigma _1 \colon \mathcal R _1 \to \mathcal R _2 , \quad \text{and} \quad \sigma_2\colon \mathcal C _1 \to \mathcal C _2 $$
such that $\sigma _1$ restricts to a bijection from $\bigsqcup _{i} (\{ X_i^u \} \times [1, n_i^u]) $ onto $\bigsqcup _{j} (\{ X_j^u \} \times [1, m_j^u]) $
for all $u\in E^{0,0}$, and $\sigma_2$ restricts to a bijection from  $\bigsqcup _{u,i} (\{ X_i^u \} \times [1, n_i^u]\times \{ w \} \times [1, a_{X_i^u}(w,u)]) $
onto $\bigsqcup _{u,j} (\{ X_j^u \} \times [1, m_j^u]\times \{ w \} \times [1, a_{X_j^u}(w,u)])$ for all $w\in E^{0,1}$. Note that this is possible because of
(\ref{eq:sums-ranges}) and (\ref{eq:sums-sources}). Define a $\mathcal R_1\times \mathcal C_1$ matrix $\sigma (T)$ by
$$\sigma (T)_{r_1,c_1} = T_{\sigma _1(r_1), \sigma_2 (c_1)} \, , \qquad r_1\in \mathcal R _1, c_1\in \mathcal C_1. $$
Finally we define the map $\lambda_{(E,C)} \colon \text{ker} (1_C- A_{(E,C)}) \to K_1(C^*(E,C))$ by
$$\lambda_{(E,C)} (x) = [U_x]_1 , \quad \text{where} \quad U_x= Z\sigma (T)^* .$$
It is easily checked that this map does not depend on the choices of orderings that we have made, and of the specific bijections $\sigma_1$ and $\sigma_2$.
Similarly, we can use  $[\sigma^{-1}(Z)T^*]_1$ to define $[U_x]_1$.

\begin{proposition}
 \label{prop:commuting-diagram}
With the notation above, the following diagram
 \begin{equation}
\begin{CD}
{\rm ker} (1_C- A_{(E,C)})  @>{\lambda}_{(E,C)}>> K_1(C^*(E,C))\\
@V{\Phi}V{\cong}V  @VV{K_1(\phi_0)}V \\
{\rm ker} (1_D - A_{(F,D)}) @>>{\lambda_{(F,D)}}> K_1(C^*(F,D))
\end{CD}
\end{equation}
is commutative.
\end{proposition}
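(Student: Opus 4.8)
The plan is to evaluate both composites on an arbitrary $x\in\ker(1_C-A_{(E,C)})$, written as the positive-minus-negative expression (\ref{eq:x-in-the-ker}), and to show directly that $\phi_0(U_x)$ and $U_{\Phi(x)}$ have the same class in $K_1(C^*(F,D))$, since $K_1(\phi_0)(\lambda_{(E,C)}(x))=[\phi_0(U_x)]_1$ and $\lambda_{(F,D)}(\Phi(x))=[U_{\Phi(x)}]_1$. The first step is to make $\Phi(x)$ explicit. Expanding $\gamma_i^u=\sum_{x_1^u}b(x_1^u)-\sum_{x_i^u}b(x_i^u)$ in the defining formula (\ref{eq:defintion-of-Phi}) and using the range balance (\ref{eq:sums-ranges}) (so that $\sum_{i=2}^{k_u}(n_i^u-m_i^u)=m_1^u-n_1^u$), the formula collapses to
\begin{equation*}
\Phi(x)=\sum_{u\in E^{0,0}}\sum_{i=1}^{k_u}(m_i^u-n_i^u)\sum_{x_i^u\in X_i^u}\delta_{X(x_i^u)}.
\end{equation*}
Since $n_i^um_i^u=0$, this already displays $\Phi(x)$ as a positive part (the groups $X(x_i^u)$ with $m_i^u>0$, coefficient $m_i^u$) minus a negative part (those with $n_i^u>0$, coefficient $n_i^u$). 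Thus $\Phi$ sends the positive part of $x$, governed by the $n_i^u$, to the \emph{negative} part of $\Phi(x)$, and conversely; this role reversal is the combinatorial shadow of the adjoints that $\phi_0$ introduces.

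Let $Z'$ and $T'$ be the matrices of $\alpha$-edges attached by the recipe of Section \ref{sect:compK1} to the positive and negative parts of $\Phi(x)$, so that $\lambda_{(F,D)}(\Phi(x))=[Z'\sigma'(T')^*]_1$. I would then apply $\phi_0$ to $U_x=Z\sigma(T)^*$ using $\phi_0(x_i^u)=\sum_{x_l,\,l\ne i}(\alpha^{x_i^u}(x_1,\dots,\widehat{x_i^u},\dots,x_{k_u}))^*$. Each entry of $\phi_0(U_x)$ expands into a sum of products $(\alpha^{x_i^u})^*\alpha^{x_j^{u'}}$, where the first factors come from groups with $n_i^u>0$ (the edges out of which $T'$ is built) and the second from groups with $m_j^{u'}>0$ (the edges of $Z'$). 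Decomposing the non-minimal support projections $\phi_0(u)=\sum_{(x_1,\dots,x_{k_u})}v(x_1,\dots,x_{k_u})$ into their constituent vertex projections — i.e. replacing $\phi_0(U_x)$ by the Murray--von Neumann equivalent partial unitary $\widetilde{\phi_0(U_x)}$ obtained from this refinement, which does not change the $K_1$-class — and using the relations (SCK1) among the $\alpha$'s, one identifies
\begin{equation*}
\widetilde{\phi_0(U_x)}=\sigma'(T')^*\,Z',
\end{equation*}
that is, exactly the two matrices building $U_{\Phi(x)}=Z'\sigma'(T')^*$, but in the opposite order.

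To finish I would compare $\sigma'(T')^*Z'$ with $Z'\sigma'(T')^*$. Writing $B=Z'$, the analogue of (\ref{eq:ZZStar-and-ZstarZ}) for $(F,D)$ together with (\ref{eq:sums-ranges}) shows that $B$ is a partial isometry with $B^*B$ equal to the support of $\widetilde{\phi_0(U_x)}$ (a projection on the $F^{0,1}$-vertices) and $BB^*$ equal to the support of $U_{\Phi(x)}$ (on the $F^{0,0}$-vertices), and moreover $U_{\Phi(x)}=B\,\widetilde{\phi_0(U_x)}\,B^*$. Conjugation of a partial unitary by a partial isometry implementing a Murray--von Neumann equivalence of the relevant supports preserves the $K_1$-class — conjugate by the $2\times2$ exchange unitary $\left(\begin{smallmatrix} B & 1-BB^* \\ 1-B^*B & B^* \end{smallmatrix}\right)$ — so $[\widetilde{\phi_0(U_x)}]_1=[U_{\Phi(x)}]_1$. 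Combining the steps, $K_1(\phi_0)(\lambda_{(E,C)}(x))=[\phi_0(U_x)]_1=[\widetilde{\phi_0(U_x)}]_1=[U_{\Phi(x)}]_1=\lambda_{(F,D)}(\Phi(x))$, which is the asserted commutativity.

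The main obstacle is the identification in the second step. Because each $\phi_0(x_i^u)$ is a sum of $\alpha^*$'s ranging over the new source vertices $v(\dots)$, matching $\widetilde{\phi_0(U_x)}$ with $\sigma'(T')^*Z'$ requires a careful bookkeeping of the fourfold index sets $\mathcal R_i,\mathcal C_i$ attached to $x$ against those attached to $\Phi(x)$, tracking both the positive/negative role reversal and the refinement forced by these sums, and checking that the refinement leaves the $K_1$-class unchanged. That $\lambda$ does not depend on the chosen orderings of edges or on the bijections $\sigma_j,\sigma_j'$ is exactly what allows these indices to be aligned; verifying this alignment, rather than any $K$-theoretic subtlety, is where the real work lies.
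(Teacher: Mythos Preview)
Your strategy is the same as the paper's: compute $\Phi(x)$ explicitly (you get the same formula, with the positive/negative role reversal), apply $\phi_0$, and match the result against a representative of $\lambda_{(F,D)}(\Phi(x))$. The difference is in the execution, and the paper's is noticeably cleaner.

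The paper first passes from $U_x=Z\sigma(T)^*$ to the equivalent representative $V_x=\sigma(T)^*Z$ \emph{before} applying $\phi_0$. This single move is the point: $V_x$ is indexed by $\mathcal C_1\times\mathcal C_1$, and $\mathcal C_1$ identifies naturally with the row set $\mathcal R'_2$ of the negative-part matrix $T_1$ for $\Phi(x)$ (via $(X_i^u,r,w,t)\leftrightarrow(X(z_t),r)$). With this identification, the paper shows
\[
\phi_0(\sigma(T)^*Z)\ =\ \tilde\sigma(Z_1)\,T_1^*
\]
as a \emph{literal equality of matrices}, by an explicit entry-by-entry computation after writing down concrete bijections $\tilde\sigma_1,\tilde\sigma_2$ built from $\sigma_1,\sigma_2$. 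No $K$-theoretic manoeuvre is needed beyond the initial $[U_x]_1=[V_x]_1$.

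Your route applies $\phi_0$ to $U_x$ itself. The resulting matrix is indexed by $\mathcal R_1\times\mathcal R_1$, while $\sigma'(T')^*Z'$ is indexed by $\mathcal C'_1\times\mathcal C'_1$; these have different sizes, so they cannot be equal. You patch this with a ``refinement'' (decomposing $\phi_0(u)$ into the $v(x_1,\dots,x_{k_u})$'s) followed by a conjugation, but the refinement step is exactly where the proof lives and you leave it imprecise: one has to say what partial isometry implements the re-indexing, check that the entries of $\phi_0(U_x)$ decompose compatibly along the $v(\cdot)$-blocks, and then match entries. All of this dissolves if you make the swap $U_x\to V_x$ at the outset; your final conjugation by $Z'$ is likewise absorbed, since $\tilde\sigma(Z_1)T_1^*$ is already one of the admissible representatives of $\lambda_{(F,D)}(\Phi(x))$. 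In short: your outline is not wrong, but the step you flag as ``where the real work lies'' is the one the paper makes both precise and unnecessary by choosing the other representative first.
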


\begin{proof}  Recall that $(F,D):=(E_1,C^1)$.
 Let $x$ be an element in $\text{ker}(1_C-A_{(E,C)})$, written as in (\ref{eq:x-in-the-ker}).
 Note that $[U_x]_1=[V_x]_1$ in $K_1(C^*(E,C))$, where $U_x= Z\sigma (T)^*$ and $V_x= \sigma (T)^*Z$.
 We now compute the image of $\sigma (T)^*Z$ under the map $\phi_0$. Consider a nonzero entry of this matrix, corresponding to
 row $\sigma_2^{-1}(X_j^{u'}, r', w', t')$ and column $(X_i^u, r, w, t)$.
 The entry will be of the form $y^*z$ for some $y\in X_j^{u'}$ and some $z\in X_i^u$, with $s(y)= w'$ and $s(z)= w$.
 Since the (nonzero) entry $y$ must be at position $((X_j^{u'}, r'), (X^{u'}_j, r', w', t'))$ in the matrix $T$, and $z$ must be at position
$((X_i^u, r), (X_i^u, r, w,t))$ in the matrix $Z$, we must necessarily have
$\sigma _1(X_i^u, r)= (X_j^{u'}, r')$. In particular, by the choice of $\sigma_1$, we must have $u'=u$ and  thus
$$\sigma _1(X_i^u, r)= (X_j^{u}, r').$$
 Set $y=y_j^u$ and $z= z_i^u$.
 We have
\begin{align}
\label{eq:entry-for-TstarZ}
\phi_0 (y^*z) & = \Big( \sum_{y_l^u\in X_l^u, l\ne j} \alpha^{y_j^u} (y_1^u, \dots , \widehat{y_j^u},\dots , y_{k_u}^u) \Big) \Big( \sum _{z_k^u\in X_k^u, k\ne i} \alpha ^{z_i^u}(z_1^u,\dots , \widehat{z_i^u},
 \dots , z_{k_u}^u) ^*\Big)\\
\notag
& = \sum_{z_l^u\in X_l^u, l\notin \{ i,j \}}  \alpha^{y_j^u} (z_1^u, \dots , \widehat{y_j^u},\dots ,z_i^u, \dots ,  z_{k_u}^u)
 \alpha ^{z_i^u}(z_1^u,\dots , y_j^u, \dots ,  \widehat{z_i^u},
 \dots , z_{k_u}^u) ^* \, ,
\end{align}

Now we wish to compute the image of $x$ under $\Phi$, where $\Phi$ is the isomorphism defined in Lemma \ref{lem:twokeridnetified}.
Using (\ref{eq:defintion-of-Phi}), the definition of $\gamma _i^u$, and the identification of $b(x)$ with $\delta _{X(x)}$, for $x\in E^1$, we obtain
\begin{align*}
\Phi  (x) & = \sum_{u\in E^{0,0}} \sum_{i=2}^{k_u} n_i^u (\sum _{x_1^u\in X_1^u}  \delta_{X(x_1^u)} - \sum_{x_i^u\in X_i^u} \delta_{X(x_i^u)} )
- \sum _{u\in E^{0,0}} \sum _{j=2}^{k_u} m_j^u
(\sum _{x_1^u\in X_1^u}  \delta _{X(x_1^u)} - \sum_{x_j^u\in X_j^u} \delta_{X(x_j^u)} )\\
& =  \sum_{u\in E^{0,0}}(\sum_{i=2}^{k_u} n_i^u)(\sum_{x_1^u\in X_1^u} \delta_{X(x_1^u)} ) + \sum_{u\in E^{0,0}} \sum_{j=2}^{k_u} m_j^u (\sum _{x_j^u\in X_j^u} \delta_{X(x_j^u)} )\\
& - \Big( \sum _{u\in E^{0,0}} (\sum_{j=2}^{k_u}  m_j^u) (\sum_{x_1^u\in X_1^u} \delta _{X(x_1^u)} ) + \sum _{u\in E^{0,0}} \sum _{i=2}^{k_u} n_i^u (\sum_{x_i^u\in X_i^u} \delta_{X(x_i^u)})   \Big)
\end{align*}
From (\ref{eq:sums-ranges}), we have
$$\sum _{i=2}^{k_u} n_i^u - \sum _{j=2}^{k_u} m_j^u = m_1^u-n_1^u  \qquad (u\in E^{0,0}), $$
and so we get from the above
$$\Phi (x) = \sum _{u\in E^{0,0}} \sum_{j=1}^{k_u} m_j^u (\sum _{x_j^u\in X_j^u} \delta_{X(x_j^u)})- \sum _{u\in E^{0,0}}\sum_{i=1}^{k_u} n_i^u
(\sum _{x_i^u\in X_i^u} \delta_{X(x_i^u)}).$$
Let $Z_1$ and $T_1$  be the matrices corresponding to the ``positive part'' and the ``negative part'' of $\Phi (x)$, respectively.
We will compute $\widetilde{\sigma} (Z_1) T_1^*$, where $\widetilde{\sigma}= (\widetilde{\sigma}_1, \widetilde{\sigma}_2)$ is defined later.
We can identify the set $\mathcal R_2'$ of rows of $T_1$ with $\mathcal C_1$. Indeed the column labeled $(X_i^u, r,w,t)$ corresponds to the
row $(X(z_t), r)$, where $z_t$ is the $t$-th element in the list of elements from $X_i^u$ which have source $w$.
Similarly, we can identify the set $\mathcal R_1'$ of rows of $Z_1$ with $\mathcal C_2$

Note that, given elements $x_p^u\in X_p^u$, $p=1,\dots , k_u$,  there is only one arrow in $X(x_i^u)$ with source
$v(x^u_1,\dots  , v_{k_u}^u )$, namely $\alpha^{x_i^u} (x_1^u,\dots ,\widehat{x_i^u},\dots ,x_{k_u}^u)$. Therefore the labeling of the set
$\mathcal C_ 2'$ of columns of $T_1$ is given by
$$\mathcal C _2' = \bigsqcup  \{ X(x_i^u ) \}
\times [1, n_i^u] \times \{ v(x_1^u, \dots , x_i^u,\dots , x_{k_u}^u ) \} \, ,$$
where the union is extended to all $x_i^u\in X_i^u$ such that $n_i^u >0$ and to all choices of $(k_u-1)$-tuples
$(x_1^u, \dots x_{i-1}^u, x_{i+1}^u,\dots x_{k_u}^u)\in X_1^u \times \cdots \times X_{i-1}^u\times X_{i+1}^u\times \cdots \times X_{k_u}^u$.
There is only one nonzero entry in the column of $T_1$ labeled $(X(x_i^u), r, v(x_1^u, \dots , x_i^u,\dots , x_{k_u}^u))$, which is
$\alpha ^{x_i^u} (x_1^u,\dots ,\widehat{x}_i^u, \dots , x_{k_u} ^u)$ at row $(X(x_i^u), r)$.

The maps $\widetilde{\sigma}_i$, $i=1,2$, are defined as follows. The map $\widetilde{\sigma}_1\colon \mathcal R_2'\to \mathcal R _1'$  is
defined to be $\sigma _2$, with the identification of $\mathcal R_2'$
and $\mathcal R_1'$ with $\mathcal C_1 $ and $\mathcal C_2$ outlined above, respectively. To define $\widetilde{\sigma}_2 \colon \mathcal C_2'\to \mathcal C_1'$,
put
$$\widetilde{\sigma} _2( X(x_i^u), r, v(x_1^u,\dots , x_{k_u}^u)) = (X(x_j^u), r',  v(x_1^u,\dots , x_{k_u}^u)) ,$$
where $\sigma _1 (X_i^u, r) = (X_j^u, r')$ for $r'\in [1, m_j^u]$. That is, $x_j^u$ is determined as the unique element of $X_j^u$ which appears in the $k_u$-tuple
$(x_1^u,\dots , x_{k_u}^u)$.

Now, we wish to compute the $(\sigma_2^{-1} (X_j^u, r', w', t'), (X_i^u, r, w,t))$-entry of the matrix $ \widetilde{\sigma} (Z_1) T_1^*$, where $\sigma _1(X_i^u, r) = (X_j^u, r')$.
Recall from the beginning of the proof that the edge corresponding to  $(X_i^u, w,t)$ is denoted by $z= z^u_i$ and that the edge corresponding to $(X_j^u, w',t')$ is
denoted by $y=y_j^u$, so we are dealing with the
$(\widetilde{\sigma}_1^{-1} (X(y_j^u), r'), (X(z_i^u), r))$-entry of $ \widetilde{\sigma} (Z_1) T_1^*$.
Now consider a pair
$$\Big( (X(y_j^u), r'), (X(y_j^u), r', v(y_1^u,\dots , y_{k_u}^u))\Big),  \quad \Big( (X(z_i^u), r), (X(z_i^u), r, v(z_1^u,\dots , z_{k_u}^u)) \Big) $$ of positions
in the matrices $Z_1$ and $T_1$ respectively, giving rise to a nonzero contribution to the $(\widetilde{\sigma}_1^{-1} (X(y_j^u), r'), (X(z_i^u), r))$-entry
of $\widetilde{\sigma}(Z_1)T_1^*$. Then we must have
$$\widetilde{\sigma}_2(X(z_i^u), r, v(z_1^u,\dots , z_{k_u}^u)) = (X(y_j^u), r', v(y_1^u, \dots , y_{k_u}^u)) .$$
By the definition of $\widetilde{\sigma}_2$ and the fact that $\sigma_1( X_i^u, r)= (X_j^u, r')$, this happens if and only if
$y_l^u= z_l^u$ for all $l=1,\dots, k_u$. Hence, the contribution will be
$$ \alpha ^{y_j^u}(z_1^u,\dots , \widehat{y_j^u},\dots , z_i^u,\dots , z_{k_u}^u) \alpha^{z_i^u}(z_1^u,\dots , y_j^u, \dots \widehat{z_i^u}, \dots ,z_{k_u}^u)^*.$$
So, the $(\sigma_2^{-1} (X_j^u, r', w', t'), (X_i^u, r, w,t))$-entry of the matrix $ \widetilde{\sigma} (Z_1) T_1^*$
is
$$\sum _{z_l^u\in X_l^u, l\notin \{i,j \}} \alpha ^{y_j^u}(z_1,\dots , \widehat{y_j^u},\dots , z_i^u,\dots , z_{k_u}^u) \alpha^{z_i^u}(z_1^u,\dots , y_j^u, \dots \widehat{z_i^u}, \dots ,z_{k_u}^u)^* , $$
which is precisely (\ref{eq:entry-for-TstarZ}). Note that the argument we have just used gives that, if $\sigma_1(X_i^u,r)\ne (X_j^u,r')$ then the
$(\sigma_2^{-1} (X_j^u, r', w', t'), (X_i^u, r, w,t))$-entry of the matrix $ \widetilde{\sigma} (Z_1) T_1^*$ is $0$, for all $w,w',t,t'$.
Thus, we obtain that $\phi_0 (\sigma (T)^*Z) = \widetilde{\sigma}(Z_1)T_1^*$, and so
$$ \lambda _{(F,D)} (\Phi (x)) = [\widetilde{\sigma}(Z_1) T_1^*]_1 = K_1(\phi_0) ([\sigma (T)^* Z]_1) = K_1(\phi_0) (\lambda_{(E,C)} (x)),$$
as desired.
\end{proof}

We now proceed to show that the map $\lambda_{(E,C)}$ is an isomorphism. Note that this allows us to
explicitly compute generators for $K_1(C^*(E,C))$ (see below for some examples).

\begin{theorem}
\label{thm:lambda-isomorphism}
Let $(E,C)$ be a finite bipartite separated graph. Then the map
$$\lambda_{(E,C)}\colon \ker (1_C- A_{(E,C)}) \to K_1(C^*(E,C))$$
is a group  isomorphism.
\end{theorem}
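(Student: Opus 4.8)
The plan is to realize $C^*(E,C)$ as an amalgamated free product over the finite-dimensional diagonal $B=C(E^0)$ (finite-dimensional since $E$ is finite), to compute $\delta\circ\lambda_{(E,C)}$ by means of Theorem \ref{thm:general-case}, and then to induct on $n(E,C)=\sum_{u\in E^{0,0}}(|C_u|-1)$. Concretely, I would split off the first separation set: let $(E^{(1)},C^{(1)})$ be the bipartite graph with edge set $\bigsqcup_u X_1^u$ and $C^{(1)}_u=\{X_1^u\}$ (so it is non-separated), and let $(E^{(\geq2)},C^{(\geq2)})$ be the bipartite separated graph with edge set $\bigsqcup_{u,\,i\geq2}X_i^u$ and $C^{(\geq2)}_u=\{X_2^u,\dots,X_{k_u}^u\}$. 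Writing $A^{(1)}=C^*(E^{(1)},C^{(1)})$ and $A^{(\geq2)}=C^*(E^{(\geq2)},C^{(\geq2)})$, the defining relations show that no relations link edges from different separation sets beyond the vertex relations, so $C^*(E,C)\cong A^{(1)}*_B A^{(\geq2)}$.

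Because $A^{(1)}$ is a graph C*-algebra of a non-separated graph, Theorem \ref{thm:KTHsepgraph} yields $K_1(A^{(1)})=0$: the $\Z^{(E^{0,0})}$-component of $1_{C^{(1)}}-A_{(E^{(1)},C^{(1)})}$ is already injective. Since $K_1(B)=0$ as well, the six-term sequence of Theorem \ref{thm:thomsen} collapses to a short exact sequence
\begin{equation*}
0\to K_1(A^{(\geq2)})\xrightarrow{\ j_{2*}\ }K_1(C^*(E,C))\xrightarrow{\ \delta\ }\ker\big((i_{1*},i_{2*})\colon K_0(B)\to K_0(A^{(1)})\oplus K_0(A^{(\geq2)})\big)\to0,
\end{equation*}
with $\delta$ the index map. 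Identifying $K_0(B)=\Z^{(E^0)}$ and $K_0(A^{(l)})=\coker(1_{C^{(l)}}-A)$ through Theorem \ref{thm:KTHsepgraph}, the maps $i_{l*}$ become the canonical quotients, so the right-hand term is $\operatorname{im}(1_{C^{(1)}}-A)\cap\operatorname{im}(1_{C^{(\geq2)}}-A)$.

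The heart of the argument is the computation of $\delta\circ\lambda_{(E,C)}$. For $x$ as in (\ref{eq:x-in-the-ker}), the partial unitary $U_x=Z\sigma(T)^*$ splits along $A^{(1)}*_B A^{(\geq2)}$: the entries of $Z$ and of $\sigma(T)$ that are first-set edges lie in $A^{(1)}$ and the rest in $A^{(\geq2)}$, and the set-index recorded by the row and column labels (tracked through the relabelling bijections $\sigma_1,\sigma_2$) makes this an orthogonal block decomposition $Z=j_1(y_1)+j_2(y_2)$, $\sigma(T)=j_1(x_1)+j_2(x_2)$ meeting the hypotheses of Theorem \ref{thm:general-case}. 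Reading off $x_1x_1^*=\bigoplus_u m_1^u\,u$, $x_1^*x_1=\bigoplus_w\big(\sum_u m_1^u a_{X_1^u}(w,u)\big)\,w$ and the analogous projections for $y_1$ (with $n_1^u$ in place of $m_1^u$), the formula of Theorem \ref{thm:general-case} gives
\begin{equation*}
\delta\big(\lambda_{(E,C)}(x)\big)=(1_C-A_{(E,C)})(P_1x),\qquad P_1x=\sum_u(n_1^u-m_1^u)\,\delta_{X_1^u},
\end{equation*}
where $P_1x$ is the first-set part of $x$.

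On the algebraic side there is an exact sequence
\begin{equation*}
0\to\ker(1_{C^{(\geq2)}}-A)\xrightarrow{\ \iota\ }\ker(1_C-A_{(E,C)})\xrightarrow{\ \rho\ }\operatorname{im}(1_{C^{(1)}}-A)\cap\operatorname{im}(1_{C^{(\geq2)}}-A)\to0,
\end{equation*}
where $\iota$ is the inclusion of kernel vectors supported on sets of index $\geq2$ and $\rho(x)=(1_C-A)(P_1x)$; exactness follows because $1_C-A$ is injective on $\operatorname{span}\{\delta_{X_1^u}\}_u$ (whence $\ker\rho=\operatorname{im}\iota$) and because any $\xi=(1_{C^{(1)}}-A)(a)=(1_{C^{(\geq2)}}-A)(b)$ equals $\rho(a-b)$. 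The explicit form of $\lambda$ shows $\lambda_{(E,C)}\circ\iota=j_{2*}\circ\lambda_{(E^{(\geq2)},C^{(\geq2)})}$, while the displayed computation gives $\delta\circ\lambda_{(E,C)}=\rho$, so $\lambda$ intertwines the two exact sequences. By induction $\lambda_{(E^{(\geq2)},C^{(\geq2)})}$ is an isomorphism (the base case $n(E,C)=0$ is the non-separated one, where both groups vanish), and the right vertical map is the identity, so the five lemma forces $\lambda_{(E,C)}$ to be an isomorphism; a harmless point is that peeling may turn a range vertex with $k_u=1$ into a source, but such vertices contribute trivially and the formulas extend verbatim. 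I expect the main obstacle to be precisely the block analysis of $U_x=Z\sigma(T)^*$ in the third paragraph—verifying the orthogonality hypotheses of Theorem \ref{thm:general-case} and that its formula collapses to $(1_C-A)(P_1x)$—whereas the free-product decomposition and the naturality of $\lambda$ are routine from the defining relations and the definition of $\lambda$.
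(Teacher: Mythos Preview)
Your approach is correct and genuinely different from the paper's.  The paper argues injectivity and surjectivity separately: for injectivity it chooses, for each nonzero $x$, a partition $C=C_1\sqcup C_2$ separating the positive from the negative support of $x$, and then uses only the easy case (Lemma~\ref{lem:easy-case}) to see $\Delta([U_x])\neq0$; for surjectivity it inducts on $|C|$ by removing a single $X\in C$, invokes the specific splitting $K_1(C^*(E,C))=K_1(C^*(E',C'))\oplus H$ with $H$ cyclic that is extracted from the proof of \cite[Theorem~5.2]{AG2}, and then uses Theorem~\ref{thm:general-case} to hit the generator of $H$.  You instead fix one partition once and for all (first sets versus the rest), compute $\delta\circ\lambda_{(E,C)}=(1_C-A)\circ P_1$ via Theorem~\ref{thm:general-case}, match it with the purely algebraic short exact sequence, and finish with the five lemma.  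Your argument is more self-contained, since it does not reach into the proof of \cite[Theorem~5.2]{AG2} for the cyclic-quotient structure, and it handles injectivity and surjectivity simultaneously; the paper's proof, on the other hand, gets injectivity with the simpler Lemma~\ref{lem:easy-case} and needs Theorem~\ref{thm:general-case} only for surjectivity.

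Your block analysis of $U_x$ is fine, but it is worth recording the reason: each row of $Z$ (respectively of $\sigma(T)$) carries entries from a single $X_i^u$ (respectively from a single $X_j^u$, namely the one determined by $\sigma_1$ of that row), and its nonzero columns are likewise confined to one $\mathcal C$-block; hence the row split into first-set and higher-set rows gives partial isometries $y_1,y_2$ (and $x_1,x_2$) with mutually orthogonal range and source projections in $M_\infty(B)$, exactly as Theorem~\ref{thm:general-case} requires.  Note that the two decompositions $e=e_1\oplus e_2=g_1\oplus g_2$ need not coincide (the first-set rows of $Z$ and of $\sigma(T)$ are different row sets), but Theorem~\ref{thm:general-case} does not ask them to.  The technical wrinkle about vertices $u$ with $k_u=1$ becoming sources after peeling is shared by the paper's induction (there $C'=C\setminus\{X\}$ can equally well create sources) and is handled the same way: such vertices contribute nothing to $\Z^{(C)}$ or to $\lambda$, and Theorem~\ref{thm:KTHsepgraph} does not require bipartiteness.
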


\begin{proof}
Set $\lambda : = \lambda_{(E,C)}$. It is easy to check that $\lambda $ is a group homomorphism.

To show injectivity, suppose that $\lambda (x) = 0$, where
$$x=\sum _{X\in \mathcal C} n_X \delta _X - \sum _{Y\in \mathcal D} m_Y \delta _Y , $$
where $\mathcal C, \mathcal D \subseteq C$, with $\mathcal C \cap \mathcal D = \emptyset$, and $n_X, m_Y>0$ for all $X,Y$.
It will be convenient to use the notations ${\bf r}(X) =u$ and ${\bf s}(X) = \sum _{x\in X} s(x)$, for $X\in C_u$.

Choose a partition $C = C_1\sqcup C_2$ such that $\mathcal C\subseteq C_1$ and $\mathcal D \subseteq C_2$. Then we have
$$C^*(E,C) = C^*(E_1, C_1) *_B C^*(E_2, C_2) , $$
where $B= C(E^0)$, $E_1$ is the restriction of $E$ to $C_1$, that is $(E_1)^0 = E^0$ and $E^1 = \cup C_1$, and similalry $E_2$ is the restriction of $E$ to  $C_2$.
Now observe that from (\ref{eq:sums-ranges}), (\ref{eq:sums-sources}) and  (\ref{eq:ZZStar-and-ZstarZ}) we get
\begin{equation}
\label{eq:good-relations-forZZstar-etc}
ZZ^* = TT^*= \sigma (T)\sigma (T)^*= \bigoplus _{X\in \mathcal C} n_X \cdot {\bf r}(X), \quad Z^*Z = T^*T = \sigma (T)^* \sigma (T) = \bigoplus _{X\in \mathcal C} n_X \cdot {\bf s}(X),
\end{equation}
where $Z$ and $T$ are the matrices associated to the positive and negative parts of $x$ respectively. Let $\Delta \colon K_1(C^*(E_1, C_1) *_B C^*(E_2, C_2))\to K_0(B)$ be the homomorphism associated to this amalgamated free product, as in (\ref{eq:6first}).
By Lemma \ref{lem:easy-case} and (\ref{eq:good-relations-forZZstar-etc}), we get
$$\Delta ([Z\sigma (T)^*]_1) = [Z^*Z]- [ZZ^*] = \sum _{X\in \mathcal C} n_X [{\bf s}(X)] - \sum _{X\in \mathcal C} n_X [{\bf r}(X)] .$$
Since the graph is bipartite, there are no cancellations in this sum, and therefore, if $x\ne 0$, then $\mathcal C \ne \emptyset $ and so
$\Delta (\lambda (x)) = \Delta ([Z\sigma (T)^*]_1) \ne 0$, showing that $\lambda (x) \ne 0$.

Finally we show that $\lambda $ is surjective. First, we observe the naturality of the map $\lambda $: If $C'\subseteq C$ and $E'$ is the restriction of $E$ to $C'$, then the
following diagram
\begin{equation}
 \begin{CD}
 \label{CD:naturality-of-lambda}
\ker (1_{C'} - A_{(E',C')}) @>{\lambda_{(E',C')}}>> K_1(C^*(E',C')) \\
@V\iota VV  @VV\iota ' V \\
\ker (1_{C} - A_{(E,C)}) @>{\lambda_{(E,C)}}>> K_1(C^*(E,C)).
 \end{CD}
\end{equation}
is commutative. This is clear from the definition.  We assume by induction that for all $C'\subsetneq C$, we have
that $\lambda _{(E',C')}$ is an isomorphism. If there is $C'\subsetneq C$ such that $\iota ' (K_1(C^*(E',C')))= K_1(C^*(E,C))$, then by the
commutativity of (\ref{CD:naturality-of-lambda}), we get that $\lambda_{(E,C)}$ is surjective.
So we can assume that $\iota ' (K_1(C^*(E',C')))\subsetneq K_1(C^*(E,C))$ for all $C'\subsetneq C$.

Now let $C'$ be such that $C\setminus C' = \{X\}$, for $X\in C$. The proof of \cite[Theorem 5.2]{AG2} gives that
$$K_1(C^*(E,C)) = K_1(C^*(E',C')) \oplus H \, ,$$
where $H$ is a cyclic group (see formula (5.9) in  \cite{AG2} and the comments below it). It is enough to show that the generator $v$ of $H$ belongs to the image of $\lambda_{(E,C)}$.
Let
$$\Psi \colon K_1(C^*(E,C)) \longrightarrow K_0 (B)$$
be the connecting map corresponding to the decomposition
$$C^*(E,C) = C^*(E',C')*_B C^*(E_{\{ X \}}, \{X \})$$
of $C^*(E,C)$ as an amalgamated free product, as in (\ref{eq:6first}).

Following the  notation in the proof of \cite[Theorem 5.2]{AG2}, set $A:= 1_{C'}- A_{(E',C')}$ and $B:= 1_{\{X\}} - A_{(E_{\{ X\}}, \{ X \} )}$. It is shown there that the map $\Psi$ restricts
to an isomorphism between $H$ and $\Psi (H)$, which is an infinite cyclic group. (Note that $H\ne 0$ by our assumption.) Moreover,
$$\Psi (H) = A(\Z^{C'}) \cap B(\Z \delta_{X}) .$$
Let $b= \Psi (v)$ be the generator of $\Psi (H)$. It suffices to find an element $g$ in the image of $\lambda_{(E,C)}$ such that $\Psi (g)= b$.
Now write
$$b= B(n_X\delta _X) = A(\sum_{Y\in C'} \lambda _Y \delta_Y) ,$$
where $n_X,\lambda _Y \in \Z$. We may assume that $n_X>0$. Now we consider the element $[Z\sigma (T)^*]_1\in K_1(C^*(E,C))$ associated to the element
$$x : = n_X\delta _X - \sum_{Y\in C'} \lambda_Y \delta _Y \in \ker (1_C- A_{(E,C)}) .$$
Then, with $A_1= C^*(E',C')$ and $A_2= C^*(E_{\{ X \}}, \{X \})$, we can decompose $Z=Z_1\oplus Z_2$ with $Z_1$ corresponding to the positive part
of $ - \sum_{Y\in C'} \lambda_Y \delta _Y $ and $Z_2$ corresponding to $n_X\delta _X$. There is no contribution of $A_2$ to the negative part of $x$,
so $T= T_1\oplus 0$, where $T_1$ corresponds to the negative part of $ - \sum_{Y\in C'} \lambda_Y \delta _Y $. We have
$$e:= Z_1Z_1^* + Z_2Z_2^* = T_1T_1^*,\qquad f:= Z_1^*Z_1+ Z_2^*Z_2 = T_1^*T_1 .$$
Therefore, by Theorem \ref{thm:general-case} and (\ref{eq:good-relations-forZZstar-etc}), we get
\begin{align*}
\Psi & ([Z\sigma (T)^*]_1)  = ([Z_1^*Z_1]-[Z_1Z_1^*])-([T_1^*T_1]-[T_1T_1^*])\\ =
& ([e]-[Z_1Z_1^*])-([f]-[Z_1^*Z_1])= [Z_2Z_2^*]-[Z_2^*Z_2]\\
& = n_X[{\bf r}(X)]- n_X [{\bf s}(X)] = B(n_X\delta_X) = b.
\end{align*}
This shows that $b = \Psi (\lambda_{(E,C)} (x))$, as wanted. The proof is complete.
\end{proof}

We can now obtain a proof of an enhanced version of the main result of this section (Theorem \ref{thm:K1computed}).

\begin{theorem}
 \label{thm:main-K1-first} Let $(E,C)$ be a finite bipartite separated graph, and let $\pi \colon C^*(E,C)\to \mathcal O (E,C)$ be the natural projection map.
 Then $\pi $ induces an isomorphism
 $$\pi_* \colon K_1(C^*(E,C)) \overset{\cong}{\longrightarrow} K_1(\mathcal O (E,C)) .$$
Moreover, the map $\pi_*\circ \lambda_{(E,C)}\colon \ker (1_C-A_{(E,C)}) \to K_1 (\mathcal O (E,C))$ is an isomorphism.
 \end{theorem}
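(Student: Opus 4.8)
The plan is to deduce both assertions from the results already established for the single surjection $\phi_0$, together with the continuity of $K$-theory under direct limits. The essential input is Proposition~\ref{prop:commuting-diagram}, whose commutative square yields the identity
$$K_1(\phi_0)\circ \lambda_{(E,C)} = \lambda_{(F,D)}\circ \Phi,$$
so that $K_1(\phi_0) = \lambda_{(F,D)}\circ \Phi \circ \lambda_{(E,C)}^{-1}$. Here $\lambda_{(E,C)}$ and $\lambda_{(F,D)}=\lambda_{(E_1,C^1)}$ are isomorphisms by Theorem~\ref{thm:lambda-isomorphism}, and $\Phi$ is an isomorphism by Lemma~\ref{lem:twokeridnetified}; being a composite of three isomorphisms, $K_1(\phi_0)$ is itself an isomorphism. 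I would record this as the first step.

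Next I would apply exactly this conclusion to every member $(E_n,C^n)$ of the canonical sequence rather than only to $(E,C)=(E_0,C^0)$. Each $(E_n,C^n)$ is again a finite bipartite separated graph whose associated first canonical bipartite graph is $(E_{n+1},C^{n+1})$ and whose corresponding surjection is $\phi_n$ (Construction~\ref{cons:complete-multiresolution} and Theorem~\ref{thm:algebras}). Since Proposition~\ref{prop:commuting-diagram}, Theorem~\ref{thm:lambda-isomorphism} and Lemma~\ref{lem:twokeridnetified} are stated for an arbitrary finite bipartite separated graph, the same sandwiching argument applies verbatim and shows that $K_1(\phi_n)$ is an isomorphism for every $n\ge 0$.

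I would then invoke continuity of $K$-theory. Writing $A_n=C^*(E_n,C^n)$, we have $\mathcal O(E,C)=\varinjlim_n A_n$ with connecting maps $\phi_n$, whence $K_1(\mathcal O(E,C))\cong \varinjlim_n\bigl(K_1(A_n),K_1(\phi_n)\bigr)$. As every connecting map in this inductive system is an isomorphism, the system is essentially constant, so the canonical map from the initial term $K_1(A_0)$ to the limit is an isomorphism. This canonical map is precisely $\pi_*=K_1(\pi)$, because $\pi$ is the canonical homomorphism $A_0\to \varinjlim_n A_n$; this establishes that $\pi_*\colon K_1(C^*(E,C))\to K_1(\mathcal O(E,C))$ is an isomorphism. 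For the final assertion, $\pi_*\circ\lambda_{(E,C)}$ is then a composite of two isomorphisms (using Theorem~\ref{thm:lambda-isomorphism} for $\lambda_{(E,C)}$), hence an isomorphism.

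The genuine difficulty has already been absorbed into Theorem~\ref{thm:lambda-isomorphism} and Proposition~\ref{prop:commuting-diagram}, so here the only point demanding care is the transport of the single-step argument from $\phi_0$ to each $\phi_n$: one must be sure that $(E_{n+1},C^{n+1})$ really is the first canonical bipartite graph attached to $(E_n,C^n)$ and that $\phi_n$ plays the role of the map $\phi_0$ for that graph. This is built into the self-similar structure of Construction~\ref{cons:complete-multiresolution} and Theorem~\ref{thm:algebras}, and once it is observed the remaining ingredients—continuity of $K_1$ and composition of isomorphisms—are routine.
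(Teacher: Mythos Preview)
Your proposal is correct and follows essentially the same argument as the paper: use Proposition~\ref{prop:commuting-diagram} together with Lemma~\ref{lem:twokeridnetified} and Theorem~\ref{thm:lambda-isomorphism} to show each $K_1(\phi_n)$ is an isomorphism, and then pass to the direct limit via continuity of $K_1$. The paper's proof is terser, but your elaboration of why the single-step argument transports from $\phi_0$ to every $\phi_n$ is exactly the point the paper leaves implicit.
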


\begin{proof} It follows from Lemma \ref{lem:twokeridnetified}, Theorem \ref{thm:lambda-isomorphism}, and Proposition \ref{prop:commuting-diagram} that
all the maps $K_1(\phi_n) \colon K_1(C^*(E_n,C^n)) \to K_1(C^*(E_{n+1}, C^{n+1}))$ are isomorphisms. Since $K_1(\mathcal O (E,C)) \cong
\varinjlim _n K_1(C^*(E_n, C^n))$, with $K_1(\phi _n)$ as the connecting maps, the result follows.

The last statement follows from the first and Theorem \ref{thm:lambda-isomorphism}.
\end{proof}

Another possible method to compute the $K$-groups of
  $\mathcal {O}(E,C)$
  is by realizing it as a partial crossed product,   and then using McClanahan's
  generalized Pimsner-Voiculscu exact sequence for crossed products by semi-saturated   partial actions of free groups
  \cite[Theorem 6.2]{McCla4}.

However the known groups in the above mentioned exact sequence turn out to be  quite large and difficult to manage, making a concrete calculation rather
difficult.  Nevertheless, after having computed $K_*(\mathcal {O}(E,C))$ by the methods employed in the present article,
we may use McClanahan's result to obtain the $K$-groups for the reduced version of $\mathcal {O}(E,C)$, which we will
now briefly discuss.

Recall from Section \ref{sect:bipsepgraphs} that $\mathcal O (E,C)$ is isomorphic to the \emph{full} crossed product
  $$C(\Omega (E,C))\rtimes_{\theta^*}\mathbb F,$$
  where $(\Omega (E,C), \theta )$ is the universal $(E,C)$-dynamical system.  The \emph{reduced} version of $\mathcal O
(E,C)$ may then be defined as follows:

\begin{definition}
  We shall denote by $\mathcal {O}_{red}(E,C)$ the \emph{reduced} crossed product
  $$C(\Omega (E,C))\rtimes_{\theta^*\!,\,red}\mathbb F.$$
  \end{definition}

\begin{corollary}
\label{cor:reduced-version}
  The natural map
  $$
  \lambda: \mathcal {O}(E,C) \to   \mathcal {O}_{red}(E,C)
  $$
  induces an isomorphism on $K$-groups.
\end{corollary}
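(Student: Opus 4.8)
The plan is to exploit the description of $\mathcal O (E,C)$ established in \cite{AE} as the \emph{full} partial crossed product $C(\Omega (E,C))\rtimes_{\theta^*}\mathbb F$, where $\mathbb F$ is the free group on the finite edge set $E^1$ and $\theta$ is the partial action attached to the universal $(E,C)$-dynamical system. By definition $\mathcal O_{red}(E,C)$ is the corresponding \emph{reduced} partial crossed product, and $\lambda$ is the canonical surjection; it is equivariant and restricts to the identity on the common coefficient algebra $C(\Omega (E,C))$. First I would recall, from the construction in \cite{AE}, that $\theta$ is a semi-saturated partial action of $\mathbb F$, which is precisely the hypothesis under which McClanahan's generalized Pimsner--Voiculescu sequence \cite[Theorem 6.2]{McCla4} applies.

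The key step is to run McClanahan's six-term exact sequence for \emph{both} crossed products simultaneously. For each generator $s\in E^1$ let $D_s$ denote the domain of $\theta^*_s$, a clopen subset of $\Omega (E,C)$ and hence an ideal of $C(\Omega (E,C))$. The sequence then takes the schematic form
$$\cdots \longrightarrow \bigoplus_{s\in E^1} K_i(D_s) \longrightarrow K_i(C(\Omega (E,C))) \longrightarrow K_i(\mathcal O (E,C)) \longrightarrow \bigoplus_{s\in E^1} K_{i-1}(D_s) \longrightarrow \cdots \qquad (i\in \Z/2),$$
together with the identical sequence obtained by replacing $\mathcal O (E,C)$ with $\mathcal O_{red}(E,C)$. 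The decisive point is that the outer terms $\bigoplus_{s} K_*(D_s)$ and $K_*(C(\Omega (E,C)))$, as well as the inclusion- and $\theta_*$-type maps among them, are built solely from the coefficient data $(C(\Omega (E,C)),\theta)$, and so coincide verbatim for the full and for the reduced versions; only the middle crossed-product term differs.

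Next I would observe that, since $\lambda$ is equivariant and fixes $C(\Omega (E,C))$, it induces a morphism from the full sequence to the reduced one which is the identity on every term of the form $\bigoplus_s K_*(D_s)$ and $K_*(C(\Omega (E,C)))$, and is $\lambda_*$ on the two crossed-product terms. This yields a commutative ladder of exact sequences in which, around each crossed-product term, four of the five relevant vertical maps are isomorphisms (indeed identities). The five lemma then forces $\lambda_*$ to be an isomorphism on both $K_0$ and $K_1$, which is the assertion of the corollary.

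The main obstacle I anticipate is not the five-lemma step, which is formal, but confirming the required \emph{naturality} of \cite[Theorem 6.2]{McCla4}: namely that the sequence is available for the full crossed product as well as the reduced one, that the boundary and $\mathrm{id}-\theta_*$ maps computed for the two coincide, and that $\lambda$ intertwines them. This rests on both sequences being produced from the same coefficient algebra and partial action through the constructions of \cite{McCla4}, with $\lambda$ a homomorphism of partial crossed products over the identity on coefficients; verifying the semi-saturation hypothesis and this compatibility is where the genuine content of the argument lies.
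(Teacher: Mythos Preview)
Your proposal is correct and is exactly the approach taken in the paper: apply McClanahan's generalized Pimsner--Voiculescu sequence \cite[Theorem 6.2]{McCla4} for semi-saturated partial actions of free groups and use the Five Lemma on the resulting ladder. The ``main obstacle'' you flag is in fact not an obstacle: McClanahan's Theorem 6.2 is already stated as a commutative diagram containing the map $\lambda_*$ from the full to the reduced crossed product, so the required naturality is part of that theorem's statement rather than something you need to verify separately.
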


\begin{proof}  It is enough to notice that the arrow marked $\lambda_*$ in   \cite[Theorem 6.2]{McCla4}
is an isomorphism by the Five Lemma.
 \end{proof}

\begin{example}
\label{exam:K1Omn}
The algebra $U_n^{{\rm nc}}$ is the C*-algebra generated by the entries of a universal $n\times n$ unitary matrix $U= [u_{ij}]$, see \cite{McCla1}.
This was generalized in  \cite{McCla2}, where the C*-algebra $U_{m,n}^{{\rm nc}}$ generated by a $m\times n$ unitary matrix was considered.
The K-theory of $U_n^{{\rm nc}}$ was found in \cite[Corollary 2.4]{McCla1}. The K-theory of $U_{m,n}^{{\rm nc}}$ was computed in \cite{AG2}, as a consequence of the
computation of the K-theory of C*-algebras of separated graphs, thus solving a conjecture raised by McClanahan in \cite{McCla2}.
Recall from \cite[Example 4.5]{AG2} that
$$C^*(E(m,n), C(m,n))\cong M_{n+1}(U_{m,n}^{{\rm nc}})\cong M_{m+1}(U_{m,n}^{{\rm nc}}). $$
We now get from Theorem \ref{thm:main-K1-first} and \cite[Theorem 5.2]{AG2}:
$$K_1(\mathcal O_{m,n}^{{\rm red}}) \cong K_1( \mathcal O _{m,n}) \cong K_1(U^{\text{nc}}_{m,n}) \cong \ker\left( \begin{pmatrix}
1 & 1 \\ -n & -m
\end{pmatrix} : \Z^2\to \Z^2 \right) \cong \begin{cases} \Z & \text{if } n=m\\
0 & \text{if } n>m
\end{cases} .$$
For $m=n$, setting $E:= E(m,n)$ and $C:= C(m,n)$, we recover the fact that $K_1(U_n^{{\rm nc}})$ is generated by the class of $U= (u_{ij})$.
Indeed, Theorem \ref{thm:main-K1-first} says that $K_1(C^*(E,C))$ is generated by $\lambda_{(E,C)} (x)$, where $x= \delta_X - \delta _Y$.
Now $\lambda_{(E,C)} (\delta _X -\delta _Y) = [ZT^*]_1$, with
$$Z= \begin{pmatrix} \alpha_1 & \cdots  & \alpha_n \end{pmatrix}, \qquad T = \begin{pmatrix} \beta_1 & \cdots & \beta_n \end{pmatrix}.$$
Thus $K_1(C^*(E,C))$ is generated by the class of the unitary $\sum_{i=1} ^n \alpha_i \beta_i ^*$ of $vC^*(E,C)v$.
The unitary $T^*Z = (\beta_i^* \alpha_j)$ in $M_n(wC^*(E,C)w)$ represents the same element and corresponds to $(u_{ij})$ under the canonical isomorphism
$wC^*(E,C)w \cong U_n^{{\rm nc}}$ (see \cite[Example 4.5]{AG2} and \cite[Proposition 2.12(1)]{AG}).  The images of these unitaries through the canonical projection
maps $C^*(E,C) \to \mathcal O _{n,n}\to \mathcal O _{n,n}^{{\rm red}}$ provide the generators of $K_1$ of these C*-algebras.
\end{example}

\begin{example}
 \label{exam:K1lamplight}
 We now consider, for $p\ge 2$, the bipartite separated graph $(E,C)$ with $p+1$ vertices $E^{0,0}= \{v\}$, $E^{0,1}= \{ w_1,\dots , w_p \}$ and $2p$ edges
 $E^1= \{ \alpha _1,\dots , \alpha_p, \beta_1,\dots , \beta_p \}$, with $s(\alpha_i) = s(\beta _i) = w_i$ and $r(\alpha_i) = r(\beta_i) = v$ for $i=1,\dots ,p$, and
 with $C= \{X,Y \}$, $X= \{\alpha _i \}$, $Y= \{ \beta _i \}$.
 It was observed in \cite[Lemma 5.5(2)]{Aone-rel} that $vC^*(E,C)v \cong C^*((\bgast _ {\Z} \Z _p )\rtimes \Z)$ and in \cite[Example 9.7]{AE} that
 $v\mathcal O (E,C) v \cong C^*(\Z _p \wr \Z)$, where $\Z_p \wr \Z= (\oplus_{\Z} \Z_p)\rtimes \Z$ is the wreath product of $\Z_p$ by $\Z$.
 (The latter groups are called the {\it lamplighter groups}.)
 Here we have
 $$1_C-A_{(E,C)} = \begin{pmatrix}
                    1 & 1 \\
                    -1 & -1 \\
                    \cdots & \cdots  \\
                    -1 & -1
                   \end{pmatrix}$$
and so, by using a similar computation as in Example \ref{exam:K1Omn}, we get that $K_1(C^*(E,C))$ is a cyclic group generated by $[\sum _{i=1}^n \beta_i \alpha_i^*]_1$,
where $u: = \sum_{i=1}^n \beta_i \alpha_i^*$ is a unitary in $v C^*(E,C)v$.
Observe that $u$ is the unitary corresponding to the generator of the copy of $\Z$ in $C^*((\bgast _ {\Z} \Z _p )\rtimes \Z)$ under the canonical isomorphism
between $vC^*(E,C)v$ and $C^*((\bgast _ {\Z} \Z _p )\rtimes \Z)$. (Only the  case $p=2$ was considered in \cite[Example 5.5(2)]{Aone-rel}, but the case where $p>2$ is completely analogous.)

Similarly we obtain that $K_1(C^*(\Z _p \wr \Z ))$ is generated by the class of the unitary in $C^*(\Z_p \wr \Z)$
corresponding to the generator of $\Z$.
  \end{example}

\section{Finitely separated graphs}
 \label{sect:reduction-lemmas}

 In this section we develop some methods which allow us to extend our results for finite bipartite separated graphs to general finitely separated graphs.
The methods combine the direct limit technology of \cite{AG} and \cite[Proposition 9.1]{AE}.

 \begin{theorem}
 \label{thm:mainKifinitegrs}
 Let $(E,C)$ be a finite separated graph. Then we have
 \begin{enumerate}
  \item The canonical map $\pi_{(E,C)}\colon C^*(E,C)\to \mathcal O (E,C)$ induces an injective split homomorphism
  $K_0(\pi) \colon K_0(C^*(E,C)) \to K_0 (\mathcal O (E,C))$. Moreover
   $$K_0(\mathcal O (E,C)) \cong K_0(C^*(E,C))\oplus H \cong \coker (1_C-A_{(E,C)})\oplus H, $$
  where $H$ is a free abelian group.
  \item The map $K_1(\pi_{(E,C)})\colon K_1( C^* (E,C)) \to  K_1(\mathcal O (E,C)) $ is an isomorphism.
   \end{enumerate}
\end{theorem}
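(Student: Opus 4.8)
The plan is to reduce the general finite separated graph to the finite \emph{bipartite} case, which has already been settled in Theorems \ref{thm:K0C(E,C)} and \ref{thm:main-K1-first}, by means of the bipartite-double construction of \cite[Proposition 9.1]{AE}. First I would invoke \cite[Proposition 9.1]{AE} to attach to the finite separated graph $(E,C)$ a finite bipartite separated graph $(\tilde E,\tilde C)$, together with a projection realizing $C^*(E,C)$ as a full corner of $C^*(\tilde E,\tilde C)$ and, compatibly, $\mathcal{O}(E,C)$ as a full corner of $\mathcal{O}(\tilde E,\tilde C)$, in such a way that the canonical projection $\pi_{(\tilde E,\tilde C)}$ restricts to $\pi_{(E,C)}$ on the corner. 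The essential point to verify is precisely this compatibility: that one and the same corner induces both Morita equivalences and intertwines the two projection maps. This yields a commutative square whose horizontal arrows are the Morita isomorphisms $K_i(C^*(E,C))\cong K_i(C^*(\tilde E,\tilde C))$ and $K_i(\mathcal{O}(E,C))\cong K_i(\mathcal{O}(\tilde E,\tilde C))$, and whose vertical arrows are $K_i(\pi_{(E,C)})$ and $K_i(\pi_{(\tilde E,\tilde C)})$.

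Once this square is in place, part (2) is immediate: since $K_1(\pi_{(\tilde E,\tilde C)})$ is an isomorphism by Theorem \ref{thm:main-K1-first} and the two horizontal arrows are isomorphisms by Morita invariance of $K$-theory, $K_1(\pi_{(E,C)})$ is an isomorphism as well.

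For part (1), the same square transports the conclusion of Theorem \ref{thm:K0C(E,C)}. There $K_0(\pi_{(\tilde E,\tilde C)})$ is a split monomorphism with free abelian cokernel, so its conjugate $K_0(\pi_{(E,C)})$ is again a split monomorphism whose cokernel $H$ is isomorphic to the (free abelian) cokernel of $K_0(\pi_{(\tilde E,\tilde C)})$; in particular $K_0(\mathcal{O}(E,C))\cong K_0(C^*(E,C))\oplus H$ with $H$ free abelian. Finally I would identify $K_0(C^*(E,C))$ with $\coker(1_C-A_{(E,C)})$ by Theorem \ref{thm:KTHsepgraph} (that is, \cite[Theorem 5.2]{AG2}), which supplies the second displayed isomorphism in the statement.

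The main obstacle is the bookkeeping behind the commuting square: confirming that the Morita equivalences of \cite[Proposition 9.1]{AE} for the full algebra and for the tame algebra arise from the same corner projection and are natural with respect to $\pi$, and that the double of a finite separated graph is again finite bipartite. Everything else is then a formal consequence of Morita invariance of $K$-theory together with Theorems \ref{thm:K0C(E,C)}, \ref{thm:main-K1-first} and \ref{thm:KTHsepgraph}.
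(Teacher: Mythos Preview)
Your proposal is correct and follows essentially the same route as the paper: reduce to the finite bipartite case via \cite[Proposition 9.1]{AE}, obtain a commutative square intertwining $\pi_{(E,C)}$ and $\pi_{(\tilde E,\tilde C)}$, and then invoke Theorems \ref{thm:K0C(E,C)}, \ref{thm:main-K1-first}, and \ref{thm:KTHsepgraph}. The paper packages the Morita equivalence slightly more explicitly as an isomorphism $M_2(C^*(E,C))\cong C^*(\tilde E,\tilde C)$ (and similarly for $\mathcal O$), which makes the commutativity with $M_2(\pi_{(E,C)})$ immediate, but this is exactly the compatibility you flagged as the main bookkeeping point.
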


   \begin{proof}
       For each separated graph $(E,C)$ there is a canonical finite bipartite separated graph $(\tilde{E}, \tilde{C})$ such that
    the following diagram is commutative
\begin{equation}
\label{eq:comdigram-for-the-double}
 \begin{CD}
M_2(C^*(E,C)) @>\cong>> C^*(\tilde{E}, \tilde{C})\\
@V{M_2(\pi_{(E,C)})}VV @VV{\pi_{(\tilde{E}, \tilde{C})}}V  \\
M_2(\mathcal O (E, C)) @>\cong>> \mathcal O (\tilde{E}, \tilde{C})
\end{CD}\end{equation}
where the horizontal maps are isomorphisms \cite[Proposition 9.1]{AE}.
    Apply $K_i$, $i=0,1$, to this diagram and use Theorems \ref{thm:K0C(E,C)} and \ref{thm:main-K1-first}.
    \end{proof}

 Now we start the preparations to obtain the results for finitely separated graphs.

 We first view the assignment $(E,C)\mapsto \mathcal O (E,C)$ as a functor on a certain category. We will only consider finitely separated graphs in this paper.
 We believe that suitable generalizations should be possible for general separated graphs. The category $\FSGr$ of finitely separated graphs was defined in \cite[Definition 8.4]{AG}.
 The objects of $\FSGr$ are
 all the finitely separated graphs. If $(E,C)$ and  $(F,D)$ are finitely separated graphs, then a morphism $\phi$ from $(E,C)$ to $(F,D)$
 is a graph homomorphism $\phi = (\phi^0, \phi^1):(E^0,E^1)\to (F^0, F^1)$ from $E$ to $F$ such that $\phi ^0$ is injective, and such that, for each $X\in C$ there is (a unique) $Y\in D$ such that
 $\phi ^1$ induces a bijection from $X$ onto $Y$.

 Given an object $(E,C)$ of $\FSGr$, a {\it complete subobject} of $(E,C)$ is a finitely separated graph $(F,D)$ such that $F$ is a subgraph of $E$ and $D$ is a subset of $C$.
 (In particular the edges of $F$ are exactly all the edges of $E$ which belong to some of the elements of the subset $D$ of $C$, i.e., $F^1= \sqcup_{Y\in D} Y$.) Note that a complete subobject
 corresponds essentially
 to the categorical notion of a subobject in the category $\FSGr$. By \cite[Proposition 1.6]{AG2}, $\FSGr$ is a category with direct limits, and $(E,C)\mapsto C^*(E,C)$ defines a continuous functor from
 $\FSGr$ to the category $\Cstaralg$ of C*-algebras. If $\phi$ is a morphism from $(E,C)$ to $(F,D)$, then the associated *-homomorphism $C^*(\phi)\colon C^*(E,C)\to C^*(F,D)$ is given
 by $C^*(\phi)(v)= \phi ^0(v)$ and $C^*(\phi)(e)= \phi^1 (e)$, for $v\in E^0$ and $e\in E^1$.

Let $(E,C)$ is a finitely separated graph. Define a partial order on the set of complete subobjects of $(E,C)$ by setting $(F,D)\le (F',D')$ if and only if $(F,D)$ is a complete subobject of $(F',D')$.

 \begin{proposition}
  \label{Ofunctor}
  The assignment $(E,C)\mapsto \mathcal O (E,C)$ defines a continuous functor from the category $\FSGr$ of finitely separated graphs to the category of C*-algebras.
  Moreover, for any finitely separated graph $(E,C)$, we have $\mathcal O (E,C)= \varinjlim \mathcal O (F,D)$ where the limit is over the directed set of all the
 finite complete subobjects of $(E,C)$.
   \end{proposition}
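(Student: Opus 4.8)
The plan is to realize $\mathcal{O}$ as the composite of the continuous functor $(E,C)\mapsto C^*(E,C)$ with the operation of quotienting by the ideal $J$ of Definition \ref{def:O(E,C)}, and to derive both required properties from the analogous properties of $C^*$. For functoriality, let $\phi\colon (E,C)\to (F,D)$ be a morphism in $\FSGr$. Since $C^*(\phi)$ is a $*$-homomorphism sending each edge $e\in E^1$ to $\phi^1(e)\in F^1$, it carries the multiplicative semigroup $U_{(E,C)}$ generated by $E^1\cup (E^1)^*$ into the corresponding semigroup $U_{(F,D)}$, because a word in $E^1\cup(E^1)^*$ is sent to a word in $F^1\cup(F^1)^*$. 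Hence $C^*(\phi)(e(u))=e(C^*(\phi)(u))$ for all $u\in U_{(E,C)}$, so $C^*(\phi)$ maps each generating commutator $[e(u),e(u')]$ of $J_{(E,C)}$ to the generating commutator $[e(C^*(\phi)(u)),e(C^*(\phi)(u'))]$ of $J_{(F,D)}$. Thus $C^*(\phi)(J_{(E,C)})\subseteq J_{(F,D)}$, and $C^*(\phi)$ descends to a $*$-homomorphism $\mathcal{O}(\phi)\colon \mathcal{O}(E,C)\to\mathcal{O}(F,D)$; preservation of identities and composition then follows from the same properties of $C^*$ together with the universal property of quotients.

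For continuity, write $(E,C)=\varinjlim(E_i,C_i)$ with canonical maps $\mu_i\colon C^*(E_i,C_i)\to C^*(E,C)$; since $C^*$ is continuous, $C^*(E,C)=\varinjlim C^*(E_i,C_i)$. By the functoriality just proved, the ideals $J_{(E_i,C_i)}$ form a directed system compatible with the connecting maps, and $\mu_i(J_{(E_i,C_i)})\subseteq J_{(E,C)}$ for each $i$. The heart of the argument is the identification $J_{(E,C)}=\overline{\bigcup_i\mu_i(J_{(E_i,C_i)})}$: any $u\in U_{(E,C)}$ is a finite word in the edges of $E$ and their adjoints, and each edge of $E$ is the image of an edge from some $E_i$, so by directedness all edges occurring in $u$ and $u'$ already live at one common stage $i$, whence $[e(u),e(u')]\in\mu_i(J_{(E_i,C_i)})$. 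With this identification in hand, exactness of C*-algebra inductive limits shows that quotienting commutes with the limit, giving
$$\mathcal{O}(E,C)=C^*(E,C)/J_{(E,C)}\cong\varinjlim\big(C^*(E_i,C_i)/J_{(E_i,C_i)}\big)=\varinjlim\mathcal{O}(E_i,C_i).$$
I expect this identification of the limit ideal to be the only point requiring genuine care; everything else is formal, resting on the elementary observation that an element of the generating semigroup $U_{(E,C)}$, being a finite product, is already detected at a finite stage of the system.

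Finally, for the last assertion it suffices to note that the finite complete subobjects $(F,D)$ of a finitely separated graph $(E,C)$ form a directed set under inclusion, since the union of two finite subsets of $C$ together with the vertices incident to their edges is again a finite complete subobject, and that their direct limit in $\FSGr$ is $(E,C)$, because every vertex and every edge of $E$ lies in some such subobject. Applying the continuity established above then yields $\mathcal{O}(E,C)=\varinjlim\mathcal{O}(F,D)$, as claimed.
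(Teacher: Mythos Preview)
Your proof is correct and follows essentially the same approach as the paper: both establish functoriality by showing that $C^*(\phi)$ carries the generating commutators of $J_{(E,C)}$ into $J_{(F,D)}$, then prove continuity by arguing that any generating commutator $[e(u),e(u')]$ of $J_{(E,C)}$ involves only finitely many edges and hence already lives at some finite stage of the directed system, so that $J_{(E,C)}=\varinjlim_i J_{(E_i,C_i)}$. The final clause is likewise handled identically, by invoking continuity together with the fact that every object of $\FSGr$ is the direct limit of its finite complete subobjects.
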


\begin{proof}
 The second part follows from the first and the fact that every object in $\FSGr$ is the direct limit of the directed family of its finite complete subobjects (\cite[8.4]{AG}).

 For a finitely separated graph $(E,C)$, denote by $J_{(E,C)}$ the closed ideal of $C^*(E,C)$ generated by all the commutators $[e(u), e(u')]$, where $u,u'$ belong to the
 multiplicative subsemigroup of $C^*(E,C)$ generated by $E^1\cup (E^1)^*$. By definition, we have $\mathcal O (E,C)= C^*(E,C)/J_{(E,C)}$.

 If $\phi$ is a morphism from $(E,C)$ to $(F,D)$ in $\FSGr$, then $\phi $ induces a *-homomorphism $C^*(\phi)\colon C^*(E,C)\to C^*(F,D)$. Clearly, we have
 $C^*(\phi) (J_{(E,C)}) \subseteq J_{(F,D)}$, so that there is an induced map $\mathcal O (\phi) \colon \mathcal O (E,C)\to \mathcal O (F,D)$, and we obtain a functor $\mathcal O$ from
 $\FSGr$ to $\Cstaralg$. To show that this functor is continuous, let $\{ (E_i, C^i),\varphi _{ji}, i\le j, i,j\in I \}$ be a directed system in the category $\FSGr$.
 By \cite[Proposition 1.6]{AG2}, we have $C^*(E,C) = \varinjlim _{i\in I} C^*(E_i,C^i)$, where $(E,C)= \varinjlim_{i\in I} (E_i, C^i)$ in the category $\FSGr$. Now it follows from the description of the
 direct limit in the category $\FSGr$ that $J_{(E,C)}= \varinjlim_{i\in I} J_{(E_i,C^i)}$. Indeed,  let $u,u'$ belong to the multiplicative subsemigroup of $C^*(E,C)$ generated by $(E^1)\cup (E^1)^*$.
 Then there is $i_0\in I$ such that all the edges appearing in the expressions of $u$ and $u'$ belong to $\varphi _{\infty,i_0}^1(E_{i_0}^1)$ (see \cite[Definition 8.4 and
 Proposition 3.3]{AG}).
 Here $\varphi _{\infty, i}\colon (E_i,C^i)\to (E,C)$ are  the canonical maps to the direct limit, for $i\in I$.
Hence there are $v,v'$ in the multiplicative subsemigroup of $C^*(E_{i_0}, C^{i_0})$ generated by $E_{i_0}\cup (E_{i_0})^*$  such that
 $$[e(u),e(u')] =  C^*(\varphi_{\infty, i_0}) ([e(v),e(v')]), $$
 and this implies that $J_{(E,C)}= \varinjlim_{i\in I} J_{(E_i,C^i)}$. This in turn implies that
 $$\mathcal O (E,C) = C^*(E,C) /J_{(E,C)} = \varinjlim _ {i\in I}\,   C^*(E_i, C^i)/J_{(E_i, C^i)} = \varinjlim_{i\in I}\,  \mathcal O (E_i, C^i)\, , $$
as desired.
\end{proof}

With these preliminaries, we can already obtain the description of $K_1$ of tame graph C*-algebras of finitely separated graphs. We still
will need further work to obtain the corresponding result for $K_0$.

\begin{theorem}
 \label{thm:mainK1fggrs}
 Let $(E,C)$ be a finitely separated graph. Then the natural projection map $\pi_{(E,C)} \colon C^*(E,C) \to \mathcal O (E,C)$ induces an isomorphism
$$K_1(\mathcal O (E,C)) \cong K_1(C^*(E,C)) \cong \ker (1_C-A_{(E,C)}).$$
   \end{theorem}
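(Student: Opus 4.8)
The plan is to leverage the continuity of both functors $C^*(-)$ and $\mathcal O(-)$ together with the finite case already established in Theorem \ref{thm:mainKifinitegrs} and the continuity of $K$-theory. First I would observe that the natural projections $\pi_{(E,C)}\colon C^*(E,C)\to \mathcal O(E,C)$ assemble into a natural transformation $\pi\colon C^*(-)\Rightarrow \mathcal O(-)$ of functors on $\FSGr$. Indeed, for any morphism $\phi\colon (F,D)\to (F',D')$, the square relating $C^*(\phi)$, $\mathcal O(\phi)$, $\pi_{(F,D)}$ and $\pi_{(F',D')}$ commutes, since by the construction in the proof of Proposition \ref{Ofunctor} the map $\mathcal O(\phi)$ is precisely the map induced by $C^*(\phi)$ on the quotients $C^*(-)/J_{(-)}$, of which the morphisms $\pi$ are the quotient maps.

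Next I would write $(E,C)=\varinjlim (F,D)$ as the direct limit, in the category $\FSGr$, of the directed family of all finite complete subobjects of $(E,C)$; this is \cite[8.4]{AG}, recalled in Proposition \ref{Ofunctor}. Each such $(F,D)$ is a finite separated graph. Since the functor $(E,C)\mapsto C^*(E,C)$ is continuous by \cite[Proposition 1.6]{AG2} and the functor $(E,C)\mapsto \mathcal O(E,C)$ is continuous by Proposition \ref{Ofunctor}, applying these functors yields $C^*(E,C)=\varinjlim C^*(F,D)$ and $\mathcal O(E,C)=\varinjlim \mathcal O(F,D)$, compatibly with the natural transformation $\pi$.

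Then I would apply the functor $K_1$, which commutes with direct limits of C*-algebras. This gives $K_1(C^*(E,C))=\varinjlim K_1(C^*(F,D))$ and $K_1(\mathcal O(E,C))=\varinjlim K_1(\mathcal O(F,D))$, and identifies $K_1(\pi_{(E,C)})$ with the direct limit of the maps $K_1(\pi_{(F,D)})$. For each finite complete subobject $(F,D)$, Theorem \ref{thm:mainKifinitegrs}(2) tells us that $K_1(\pi_{(F,D)})$ is an isomorphism, and a direct limit of isomorphisms is again an isomorphism; hence $K_1(\pi_{(E,C)})$ is an isomorphism. Finally, the identification $K_1(C^*(E,C))\cong \ker (1_C-A_{(E,C)})$ is exactly Theorem \ref{thm:KTHsepgraph} (\cite[Theorem 5.2]{AG2}), which is valid for all finitely separated graphs; combining these gives the full chain of isomorphisms $K_1(\mathcal O(E,C))\cong K_1(C^*(E,C))\cong \ker(1_C-A_{(E,C)})$.

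The only point requiring care — and thus the main potential obstacle — is verifying that $\pi$ is a genuine natural transformation compatible with the direct-limit structure, so that $K_1(\pi_{(E,C)})$ really factors as $\varinjlim K_1(\pi_{(F,D)})$; everything else is routine continuity bookkeeping. This naturality, however, is immediate from the construction of $\mathcal O(\phi)$ recorded in the proof of Proposition \ref{Ofunctor}, so I expect no serious difficulty, and the argument for $K_1$ is considerably easier than the corresponding $K_0$ statement (which is why the latter is deferred to subsequent results).
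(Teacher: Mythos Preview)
Your proposal is correct and follows essentially the same approach as the paper: write $(E,C)$ as the direct limit of its finite complete subobjects, use the continuity of $C^*(-)$ and $\mathcal O(-)$ together with the continuity of $K_1$, invoke Theorem~\ref{thm:mainKifinitegrs}(2) on each finite piece, and conclude with \cite[Theorem 5.2]{AG2}. Your extra care in spelling out that $\pi$ is a natural transformation (so that $K_1(\pi_{(E,C)})=\varinjlim K_1(\pi_{(F,D)})$) is a welcome clarification of a point the paper leaves implicit.
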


   \begin{proof}
   By \cite[Theorem 1.6]{AG}, $C^*(E,C)=\varinjlim _{\mathcal C} C^*(F,D)$, where $\mathcal C$ is the directed system of the finite complete subobjects of $(E,C)$
in the category $\FSGr$. By Proposition \ref{Ofunctor}, we have that $\mathcal O (E,C) = \varinjlim _{\mathcal C} \mathcal O (F,D)$. By using
Theorem \ref{thm:mainKifinitegrs}(2) and the continuity of $K_1$, we get
$$K_1(\mathcal O (E,C)) = \varinjlim_{\mathcal C} K_1(\mathcal O (F,D)) \cong \varinjlim_{\mathcal C} K_1( C^* (F,D)) = K_1(C^*(E,C)),$$
with the mapping $K_1(\pi_{(E,C)})$ inducing the isomorphism. The last part follows from \cite[Theorem 5.2]{AG2}.
\end{proof}

The correspondence $(E,C) \mapsto (\tilde{E}, \tilde{C})$ from \cite[Proposition 9.1]{AE} can be extended to a certain functor, which we describe below.

\begin{definition}
 \label{defi:BFSGr} The objects of the category $\BFSGr$ are all the bipartite finitely separated graphs. We stress here that this condition
includes that $r(E^1)= E^{0,0}$ and that $s(E^1)= E^{0,1}$ (see Definition \ref{def:bipartitesepgraph}). For objects $(E,C)$ and $(F,D)$ of $\BFSGr$,
the morphisms from $(E,C)$ to $(F,D)$ are the morphisms $\phi \colon E\to F$ of bipartite graphs (so that $\phi^0(E^{0,0})\subseteq F^{0,0}$ and
$\phi^0(E^{0,1})\subseteq F^{0,1}$, such that $\phi ^0$ is injective, and such that, for each $X\in C$ there is (a unique) $Y\in D$ such that
 $\phi ^1$ induces a bijection from $X$ onto $Y$.

 The category $\BFSGr$ is in fact a full subcategory of the category $\FSGr$. Indeed, if $(E,C), (F,D) \in \BFSGr$ and $\phi$ is a morphism in $\FSGr$
 from $(E,C)$ to $(F,D)$, then, for $v\in E^{0,0}$, there is $e\in E^1$ such that $r_E(e)= v$ and so $\phi ^0 (v)= r_F(\phi^1(e)) \in F^{0,0}$.
 Similarly, $\phi^0(E^{0,1})\subseteq F^{0,1}$. Hence, $\BFSGr$ is just the full subcategory of $\FSGr$ whose objects are the finitely separated graphs
 $(E,C)$ such that $E^0 = s(E^1) \sqcup r(E^1)$.

 We define the functor ${\bf B} \colon \FSGr \to \BFSGr$ by ${\bf B} ((E,C))= (\tilde{E}, \tilde{C})$, where $(\tilde{E},\tilde{C})$ is the bipartite
 separated graph associated to $(E,C)$ in \cite[Proposition 9.1]{AE}. We have that $\tilde{E}^{0,0}=V_0$ and $\tilde{E}^{0,1}= V_1$, where $V_0$ and $V_1$
 are disjoint copies of $E^0$, with bijections $E^0\to V^i$, $v\mapsto v_i$, and that
$\widetilde{E}^1$ is the disjoint union of a copy of $E^0$ and
a copy of $E^1$: $$\widetilde{E}^1= \{h_v\mid v\in E^0 \} \bigsqcup
\{e_0\mid e\in E^1 \},$$ with
$$
\tilde{r}(h_v)=v_0, \quad \tilde{s}(h_v)=v_1, \quad \tilde{r}(e_0)=
r(e)_0, \quad \tilde{s}(e_0)= s(e)_1, \qquad (v\in E^0, e\in E^1 )
.$$
For $v\in E^0$, and $X\in C_v$ put $\tilde{X} = \{e_0 : e\in X \}$. Then $\tilde{C}_{v_0}:= \{\tilde{X} : X\in C_v \} \sqcup \{{\bf h} _v \}$, where
${\bf h} _v : = \{ h_v \}$ is
a singleton set.

 For a morphism  $\phi \colon (E,C) \to (F,D)$ in $\FSGr$, the morphism ${\bf B} (\phi) \colon {\bf B} (E,C) \to {\bf B} (F,D)$ is defined by
 $${\bf B} (\phi)^0( v_i)= (\phi^0 (v))_i, \quad {\bf B} (\phi)^1(h_v) = h_{\phi^0(v)}, \quad {\bf B} (\phi)^1 (e_0)=
 \phi^1(e)_0, \,\,\,  (i=0,1, v\in E^0, e\in E^1).$$
 \end{definition}

We leave to the reader the proof of the following result, which is a straightforward extension of the arguments
in \cite[Proposition 9.1]{AE} and in Proposition \ref{Ofunctor}

\begin{proposition}
\label{prop:B-version-of-functoriality}
\begin{enumerate}
 \item[(a)] The category $\BFSGr$ is a full subcategory of $\FSGr$, closed under direct limits. Consequently the functors $C^*\colon \BFSGr\to \Cstaralg$
 and $\mathcal O \colon \BFSGr \to \Cstaralg$ are continuous.
 \item[(b)] There are natural isomorphisms of functors $\FSGr \to \Cstaralg $, $C^*\circ {\bf B}\cong {\bf M}_2 \circ C^*$,
 and $\mathcal O \circ {\bf B}\cong {\bf M}_2 \circ \mathcal O$, where ${\bf M}_2\colon \Cstaralg \to \Cstaralg$
 is the functor defined by ${\bf M}_2(A) = A\otimes M_2(\C)$.
 \item[(c)] Every object in $\BFSGr$ is the direct limit of its finite complete subobjects in $\BFSGr$.
 \end{enumerate}
\end{proposition}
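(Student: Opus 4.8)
The plan is to treat the three parts in turn, leaning on the object-level statement of \cite[Proposition 9.1]{AE} and on the continuity results already available for $\FSGr$. For part (a), the fact that $\BFSGr$ is a full subcategory is already recorded in Definition \ref{defi:BFSGr}, so only closure under direct limits needs attention. First I would recall the explicit description of colimits in $\FSGr$ from \cite[Definition 8.4]{AG}: given a directed system $\{(E_i,C^i)\}$ in $\BFSGr$ with limit $(E,C)$ in $\FSGr$, one has $E^0=\varinjlim E_i^0$, $E^1=\varinjlim E_i^1$, with $C$ inherited. Since every transition morphism of $\BFSGr$ carries $E_i^{0,0}$ into $E_j^{0,0}$ and $E_i^{0,1}$ into $E_j^{0,1}$ and respects ranges and sources, the limit splits as $E^0=E^{0,0}\sqcup E^{0,1}$ with all arrows running from $E^{0,1}$ to $E^{0,0}$. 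The two nonemptiness conditions of Definition \ref{def:bipartitesepgraph} also survive: any $v\in E^{0,0}$ is the image of some $v_i\in E_i^{0,0}$, which (being bipartite) receives an edge whose image lies in $r_E^{-1}(v)$, and dually for $E^{0,1}$. Hence $(E,C)\in\BFSGr$, so colimits in $\BFSGr$ coincide with those in $\FSGr$, and continuity of $C^*$ and $\mathcal O$ on $\BFSGr$ is immediate from their continuity on $\FSGr$, proved in \cite[Proposition 1.6]{AG2} and Proposition \ref{Ofunctor} respectively.

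For part (b) I would write down the object isomorphism $\Theta_{(E,C)}\colon C^*(\tilde E,\tilde C)\overset{\cong}{\longrightarrow} M_2(C^*(E,C))$ (the inverse of the top horizontal map of \eqref{eq:comdigram-for-the-double}) explicitly on generators, namely $v_0\mapsto\bigl(\begin{smallmatrix}v&0\\0&0\end{smallmatrix}\bigr)$, $v_1\mapsto\bigl(\begin{smallmatrix}0&0\\0&v\end{smallmatrix}\bigr)$, $h_v\mapsto\bigl(\begin{smallmatrix}0&v\\0&0\end{smallmatrix}\bigr)$ and $e_0\mapsto\bigl(\begin{smallmatrix}0&e\\0&0\end{smallmatrix}\bigr)$, and check that the relations (V), (E), (SCK1), (SCK2) for $(\tilde E,\tilde C)$ go over to identities in $M_2(C^*(E,C))$ (for instance $h_vh_v^*$ and $\sum_{e\in X}e_0e_0^*$ both reduce to $\bigl(\begin{smallmatrix}v&0\\0&0\end{smallmatrix}\bigr)=v_0$ via (SCK2) in $(E,C)$). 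Naturality then reduces to a generator computation: for a morphism $\phi\colon(E,C)\to(F,D)$, both $\Theta_{(F,D)}\circ C^*(\mathbf{B}\phi)$ and $\mathbf{M}_2(C^*\phi)\circ\Theta_{(E,C)}$ send $v_0,v_1,h_v,e_0$ to the same matrices, using $\mathbf{B}(\phi)^1(h_v)=h_{\phi^0(v)}$ and $\mathbf{B}(\phi)^1(e_0)=\phi^1(e)_0$ from Definition \ref{defi:BFSGr}. To pass to $\mathcal O$, I would invoke \eqref{eq:comdigram-for-the-double}, which says precisely that $\Theta_{(E,C)}$ intertwines $\pi_{(\tilde E,\tilde C)}$ and $\mathbf{M}_2(\pi_{(E,C)})$, hence carries $J_{(\tilde E,\tilde C)}=\ker\pi_{(\tilde E,\tilde C)}$ onto $M_2(J_{(E,C)})$. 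Thus $\Theta_{(E,C)}$ descends to an isomorphism $\mathcal O(\tilde E,\tilde C)\cong M_2(\mathcal O(E,C))$, whose naturality is inherited from that of $\Theta$ together with the naturality of the quotient maps $\pi$.

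For part (c), I would consider the finite complete subobjects $(F,D)$ of $(E,C)$ that lie in $\BFSGr$, i.e. those with $F^1=\bigsqcup_{Y\in D}Y$ for a finite $D\subseteq C$ and with the minimal vertex set $F^0=s(F^1)\sqcup r(F^1)$. These form a directed family, since the union of two finite subsets of $C$ again determines such an object. Because $(E,C)$ is finitely separated with $E^0=s(E^1)\sqcup r(E^1)$, every vertex and every (finite) group of $(E,C)$ already lies in one of them, so $(E,C)$ is their set-theoretic union, hence their colimit in $\FSGr$; by part (a) this colimit is computed inside $\BFSGr$.

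I expect the only genuinely delicate point to be the naturality assertion in part (b), and within it the descent to $\mathcal O$: one must ensure that the \emph{single} isomorphism $\Theta$ simultaneously implements the ideal identification $J_{(\tilde E,\tilde C)}\leftrightarrow M_2(J_{(E,C)})$ compatibly for all morphisms. This compatibility is exactly what \eqref{eq:comdigram-for-the-double} provides at the object level, so the remaining task is to confirm that the generator computation is functorial; everything else is the routine bookkeeping of colimits and of the explicit map of \cite[Proposition 9.1]{AE}.
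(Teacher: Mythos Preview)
Your proposal is correct and is exactly the ``straightforward extension of the arguments in \cite[Proposition 9.1]{AE} and in Proposition \ref{Ofunctor}'' that the paper has in mind; in fact the paper does not write out a proof at all, leaving precisely this verification to the reader. Your explicit generator formulas for $\Theta_{(E,C)}$, the naturality check via $\mathbf{B}(\phi)$, and the colimit bookkeeping for parts (a) and (c) are the intended ingredients.
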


In preparation for the next lemma, it is convenient to get a dynamical perspective on the C*-algebra homomorphism
$\mathcal O (E,C)\to \mathcal O (F,D)$, when $(E,C)$ is a complete subobject of the finite bipartite separated graph $(F,D)$.
Under this hypothesis, we are going to define an $(E,C)$-dynamical system on $\Omega : = \sqcup _{v\in E^0} \Omega(F,D)_v$.
For $v\in E^0$, set
$$\Omega_v: = \Omega (F,D)_v .$$
The sets $H_x$, for $x\in E^1$,  are the corresponding structural sets for $(F,D)$ and the homeomorphisms
$$\theta_x\colon \Omega_{s(x)} \longrightarrow H_x,\qquad x\in E^1$$
are also the structural homeomorphisms corresponding to $(F,D)$. Observe that $\Omega $ is a clopen subset of $\Omega (F,D)$.
By the universal property of the $(E,C)$-dynamical system
$\{ \Omega (E,C)_v \mid v\in E^0 \}$ there is a unique equivariant continuous map $\gamma \colon \Omega \to \Omega (E,C)$. It is not difficult to
describe this map in terms of the configurations used in \cite[Section 8]{AE}. Namely a point in $\Omega _v$, for $v\in E^0$, is given by a certain
subset of the free group $\mathbb F$ on $F^1$, with property (c) of \cite[page 783]{AE} at $g=1$ being satisfied with respect to the vertex $v$. If $\xi$ is such a configuration,
then $\gamma(\xi) $ is the configuration
on the free group on $E^1$, obtained by neglecting all the information which does not concern the graph $E$. In terms of the Cayley graphs, the map $\gamma$ consists
of deleting all the vertices and arrows which do not belong to $E^0$ and $E^1$ respectively. This is a well-defined map by the fact that $(E,C)$ is a complete
subgraph of $(F,D)$. The equivariant continuous map $\gamma \colon \Omega \to \Omega(E,C)$ is surjective and induces an equivariant injective unital homomorphism
$C(\Omega (E,C)) \to C(\Omega) \subseteq C(\Omega (F,D))$, and thus a homomorphism
$$\mathcal O (E,C) = C(\Omega (E,C)) \rtimes \mathbb F (E^1) \to C(\Omega (F,D)) \rtimes \mathbb F (F^1)= \mathcal O (F, D) .$$
Observe that this map is unital if and only if $E^0= F^0$.

The map $\gamma \colon \Omega \to \Omega (E,C)$ induces a map $\mathbb K (\gamma ) \colon \mathbb K (\Omega( E,C)) \to \mathbb K (\Omega) $, where $\mathbb K (\mathfrak X)$ denotes the field of open compact subsets on a
topological space $\mathfrak X$, where $\mathbb K (\gamma ) (K)= \gamma ^{-1} (K)$. Since the vertices in the complete multiresolution graphs of $(E,C)$ and $(F,D)$ provide a basis of open compact
subsets of the corresponding spaces $\Omega (E,C)$ and $\Omega (F,D)$, it is clear that the map $\mathbb K (\gamma )$ will have a significance with respect to these vertices. The exact connection is described below in Lemma
\ref{lem:injectivity-on-complements}.

To show this lemma we need first to introduce a new kind of maps between finite bipartite separated graphs, which is precisely the kind of maps that appear
when we study the maps $(E_n, C^n) \to (F_n, D^n)$ induced by a complete subobject $(E,C)\to (F,D)$ in the category $\BFSGr$. (Here $\{ (E_n, C^n)\} _n$ and $\{ (F_n, D^n) \}_n $
denote the canonical sequences of finite bipartite separated graphs associated to $(E,C)$ and $(F, D)$, respectively; see Construction \ref{cons:complete-multiresolution}(c).)  It is worth to
observe that these maps also induce C*-algebra homomorphisms between the respective
separated graph C*-algebras (see Lemma \ref{lem:locally-complete-induces}).

\begin{definition}
 \label{def:locally-complete} Let $(E,C)$ and $(F,D)$ be two finite bipartite separated graphs. A {\it locally complete} map $\pi^* \colon (E,C) \to (F,D)$ consists of a complete subobject $(G,L)$
 of $(F,D)$ and a graph homomorphism $\pi = (\pi^0, \pi^1) \colon (G,L)\to (E,C)$, such that:
 \begin{enumerate}
  \item $\pi^0 \colon G^0\to E^0$ and $\pi^1\colon G^1\to E^1$ are surjective maps.
  \item For each $X\in L$,  we have $\pi^1(X) \in C$. In particular, $\pi^1$ induces a (surjective) map $\tilde{\pi}\colon L \to C$, by $\tilde{\pi} (X) = \pi^1 (X)\in C$, for $X\in L$.
  \item For each $w\in G^{0,1}$, the map $\pi^1|_w \colon s_G^{-1} (w) \to s_E^{-1} (\pi^0(w))$ is a bijection.
  \item For each $v\in G^{0,0}$, the map $\tilde{\pi}|_v\colon L_v\to C_{\pi^0(v)}$ is a bijection.
 \end{enumerate}
\end{definition}

\begin{lemma}
 \label{lem:locally-complete-induces}
 Let $\pi^* \colon (E,C)\to (F,D)$ be a locally complete map between finite bipartite separated graphs. Then there is an induced $*$-homomorphism
 $C^*(\pi^*)\colon C^*(E,C)\to C^*(F,D)$. Moreover, there is
 a canonical locally complete map $\rho^*\colon (E_1, C^1)\to (F_1, D^1)$ such that the following diagram is commutative:
\begin{equation}
\label{eq:comdigram-for-locally-complete}
 \begin{CD}
C^*(E,C) @>C^*(\pi^*)>> C^*(F,D)\\
@V\phi(E,C)_0VV @VV\phi (F,D)_0V  \\
C^* (E_1, C^1) @>C^*(\rho^*)>> C^* (F_1, D^1)
\end{CD}\end{equation}
where $\phi (E,C)_0$ and $\phi (F,D)_0$ are the canonical surjective maps (cf. Theorem \ref{thm:algebras}).
 \end{lemma}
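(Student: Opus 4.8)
The plan is to factor $C^*(\pi^*)$ through $C^*(G,L)$. Since $(G,L)$ is a complete subobject of $(F,D)$, the inclusion $\iota$ is a morphism in $\FSGr$ and hence induces $C^*(\iota)\colon C^*(G,L)\to C^*(F,D)$; so it suffices to produce a $*$-homomorphism $\Theta\colon C^*(E,C)\to C^*(G,L)$ running \emph{against} the direction of $\pi$ and to set $C^*(\pi^*)=C^*(\iota)\circ\Theta$. I would define $\Theta$ on generators by summing over the fibres of $\pi$,
$$\Theta(u)=\sum_{v\in(\pi^0)^{-1}(u)}v\quad (u\in E^0),\qquad \Theta(e)=\sum_{g\in(\pi^1)^{-1}(e)}g\quad (e\in E^1),$$
and then invoke the universal property of $C^*(E,C)$ once the images are shown to satisfy (V), (E), (SCK1), (SCK2).

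Relations (V) and (E) are immediate from orthogonality of the vertex projections, the surjectivity of $\pi^0,\pi^1$, and the fact that $\pi$ is a graph homomorphism (so $r_G(g)\in(\pi^0)^{-1}(r_E(\pi^1(g)))$, and similarly for sources). For (SCK1) one computes that $\Theta(e)^*\Theta(e')$ receives a nonzero contribution from $g\in(\pi^1)^{-1}(e)$, $g'\in(\pi^1)^{-1}(e')$ only when $r_G(g)=r_G(g')=:v_0$; condition (4) then forces $g,g'$ into the unique $Y\in L_{v_0}$ with $\widetilde{\pi}(Y)$ equal to the class $X\in C$ containing $e,e'$, so (SCK1) in $C^*(G,L)$ applies, and since condition (3) makes $(\pi^1)^{-1}(e)$ biject with $(\pi^0)^{-1}(s_E(e))$ via the source map, one gets $\Theta(e)^*\Theta(e')=\delta_{e,e'}\Theta(s(e))$. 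The decisive point for (SCK2) is that within a single $\Theta(e)\Theta(e)^*$ the cross terms vanish: two distinct edges of $(\pi^1)^{-1}(e)$ have distinct sources by (3), whence $gg'^*=0$ for $g\ne g'$; therefore $\sum_{e\in X}\Theta(e)\Theta(e)^*=\sum_{g:\,\pi^1(g)\in X}gg^*$, and grouping these edges by their range vertex $v_0\in(\pi^0)^{-1}(v)$ into the sets $Y_{v_0}\in L_{v_0}$ (again via (4)) and applying (SCK2) in $C^*(G,L)$ gives exactly $\sum_{v_0\in(\pi^0)^{-1}(v)}v_0=\Theta(v)$.

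For the second assertion I would build $\rho^*$ directly from the data of $\pi^*$. Writing $D_u=\{Y_1,\dots,Y_{l_u}\}$ with $L_u=\{Y_1,\dots,Y_{k'_u}\}$ for $u\in G^{0,0}$ (so $k'_u=k_{\pi^0(u)}$ by condition (4)), I take $(G',L')$ to be the complete subobject of $(F_1,D^1)$ consisting of the multiresolution vertices $v(y_1,\dots,y_{l_u})$ with $u\in G^{0,0}$, together with the new edges $\alpha^{y_i}(y_1,\dots,\widehat{y_i},\dots,y_{l_u})$ whose distinguished coordinate satisfies $i\le k'_u$ (equivalently, those whose range is the source of an $L$-edge) and their groups; one checks $G'^{1}=\bigsqcup_{Y\in L'}Y$, so $(G',L')$ is indeed a complete subobject. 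The morphism $\rho\colon(G',L')\to(E_1,C^1)$ is then obtained by combining $\pi$ with the forgetting of the multiresolution tail: $\rho^0=\pi^0|_{G^{0,1}}$ on range vertices, $v(y_1,\dots,y_{l_u})\mapsto v(\pi^1(y_1),\dots,\pi^1(y_{k'_u}))$ on source vertices, and $\alpha^{y_i}(y_1,\dots,\widehat{y_i},\dots,y_{l_u})\mapsto \alpha^{\pi^1(y_i)}(\pi^1(y_1),\dots,\widehat{\pi^1(y_i)},\dots,\pi^1(y_{k'_u}))$ on edges. Conditions (1)--(4) for $\rho^*$ are inherited from the corresponding conditions for $\pi^*$ together with the combinatorics of the multiresolution: surjectivity is clear, (3) holds because each $v(y_1,\dots,y_{l_u})$ emits exactly one edge per index $i\le k'_u$, matching the edges emitted by its image, and (4) descends from (4) for $\pi$.

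Finally, the commutativity of (\ref{eq:comdigram-for-locally-complete}) is checked by evaluating both composites on generators, using the explicit formulas for $\phi (E,C)_0$ and $\phi (F,D)_0$ recalled after Theorem \ref{thm:algebras} and the fibre-sum formulas for $C^*(\pi^*)$ and $C^*(\rho^*)$. On a vertex $u\in E^{0,0}$ both routes produce $\sum_{u_0\in(\pi^0)^{-1}(u)}\sum_{(y_1,\dots,y_{l_{u_0}})}v(y_1,\dots,y_{l_{u_0}})$, the two orders of summation (first expand then take fibres, versus first take fibres then expand) being interchanged by a Fubini-type reindexing that combines the sum over $C$-tuples with the $\pi^1$-fibres of each group $Y_j$ to reconstitute the sum over all $D$-tuples; the computation on edges is entirely analogous, matching the sums over the tail coordinates. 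I expect the main obstacle to lie not in the relation-checking or in the final interchange of sums, but in the bookkeeping required to pin down $(G',L')$ and $\rho$ correctly --- in particular the conceptual point that $(G',L')$ is \emph{not} the multiresolution of $(G,L)$ (whose tuples are too short) but rather the longer-tuple complete subobject of $(F_1,D^1)$ lying over it, with $\rho$ collapsing the extra coordinates.
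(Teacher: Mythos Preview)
Your proposal is correct and follows essentially the same approach as the paper. The paper defines $C^*(\pi^*)$ directly by the same fibre-sum formulas (without the intermediate factorization through $C^*(G,L)$, which is harmless), verifies (V), (E), (SCK1), (SCK2) exactly as you do using conditions (3) and (4), defines the complete subobject $(G_1,L^1)$ of $(F_1,D^1)$ and the map $\rho$ precisely as you describe (long $D$-tuples over $G^{0,0}$, with $\rho$ applying $\pi^1$ to the first $k$ coordinates and forgetting the rest), and checks conditions (1)--(4) for $\rho^*$; the paper actually leaves the diagram commutativity implicit, whereas you sketch it. Your closing remark that $(G',L')$ is \emph{not} the multiresolution of $(G,L)$ but the longer-tuple subobject of $(F_1,D^1)$ is exactly the key bookkeeping point.
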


\begin{proof}
 Define $C^*(\pi^*)$ as follows. For $v\in E^0$ and $e\in E^1$, set
 $$ C^*(\pi^*) (v) = \sum _{w\in (\pi^0)^{-1}(v)} w,\qquad C^*(\pi^*) (e) = \sum _{f\in (\pi^1)^{-1} (e)} f . $$
 It is easy to check that relations (V) and (E) are preserved by $C^*(\pi^*)$. To show that relation (SCK1) is preserved, consider $e,f\in X$, where $X\in C_v$.
 Assume first that $e\ne f$. Take $g, h\in G^1$ such that $\pi^1(g)= e$ and $\pi^1(h)= f$. If $r(g) \ne r(h)$, then $g^*h= 0$. If $r(g)= r(h)$, then $g\ne h$ and $g,h$ belong to the same
 element of $L$, by condition (4) in Definition \ref{def:locally-complete}. (Indeed, if $g\in Y\in L_{r(g)}$ and $h\in Z\in L_{r(g)}$, then $\tilde{\pi}|_{r(g)}(Y)= X= \tilde{\pi}|_{r(g)}(Z)$, and so
 $Y=Z$ by the injectivity of $\tilde{\pi}|_{r(g)}$.) Therefore $g^*h= 0$. It follows that
 $$C^*(\pi^*) (e^*f) = ( \sum _{\pi^1(g)= e} g^*)(\sum_{\pi^1(h)=f} h) = 0.$$
 Now assume that $e=f$. By condition (3) in Definition \ref{def:locally-complete}, for each $w\in (\pi^0)^{-1}(s(e))$ there is a unique $h_w\in s^{-1}(w)$ such that $\pi^1(h_w)= e$.
 If $r(h_{w_1}) = r(h_{w_2})$ for $w_1,w_2\in (\pi^0)^{-1} (s(e))$, then it follows from the same argument as before that $h_{w_1}$ and $h_{w_2}$ belong to the same element of
 $L$. It follows that $h_{w_1}^*h_{w_2} =\delta _{w_1,w_2} w_1$ for all $w_1,w_2\in (\pi^0)^{-1} (s(e))$, and thus
 $$C^*(\pi^*) (e^*e) = (\sum_{w_1\in (\pi^0)^{-1}(s(e))} h_{w_1}^*) (\sum_{w_2\in (\pi^0)^{-1}(s(e))} h_{w_2}) = \sum _{w\in (\pi^0)^{-1}(s(e))} w = C^*(\pi^*) (s(e)), $$
 as desired.

 Now we check that (SCK2) is preserved by $C^*(\pi^*)$. Take $v\in E^{0,0}$ and $X\in C_v$. Let $g,h\in G^1$ be such that $\pi^1(g) = e =\pi^1(h)$, where $e\in X$. If $s(g)\ne s(h)$, then $gh^*=0$.
 If $s(g)= s(h)$, then by condition (3) in Definition \ref{def:locally-complete} we have that $g=h$. It follows that $C^*(\pi^*) (ee^*) = \sum _{g\in (\pi^1)^{-1} (e)} gg^*$.
 Now, it follows from conditions (2) and (4) in Definition
 \ref{def:locally-complete} that for each $w\in (\pi^0)^{-1} (v)$ there is a unique $Y_w\in L_w$ such that $(\pi^1)^{-1} (X) \cap r^{-1}(w) = Y_w$. Hence, we get
 \begin{align*}
C^*(\pi^*) & \Big( \sum _{e\in X} ee^* \Big)    = \sum _{e\in X} \sum _{g\in (\pi^1)^{-1} (e)} gg^* = \sum _{g\in (\pi^1)^{-1} (X)} gg^* \\
& =
 \sum _{w\in (\pi^0)^{-1} (v)} \Big( \sum _{g\in Y_w} gg^*\Big)
 =  \sum _{w\in (\pi^0)^{-1} (v)} w= C^*(\pi^*) (v) \, ,
 \end{align*}
 as desired.

 We now show the statement about the associated separated graphs $(E_1,C^1)$ and $(F_1, D^1)$. We first define a complete subobject $(G_1, L^1)$ of $(F_1, D^1)$. Set
 $G_1^{0,0}= G^{0,1}$ and $G_1^{0,1} = r_2^{-1}(G^{0,0})$. In other words, $v\in G_1^{0,1}$ if and only if there is $u\in G^{0,0}$ such that
$v= v(x_1,\dots , x_l)$, where $x_i\in X_i$ and $D_u= \{ X_1,\dots , X_l\}$.
Now for $w\in G_1^{0,0}= G^{0,1}$, define
$$L^1_w= \{ X(x) \mid x\in G^1 \}, \qquad  G_1^1= \bigsqcup _{w\in G_1^{0,0}} L_w^1 .$$
Clearly $(G_1,L^1)$ is a complete subobject of $(F_1,D^1)$.

Now we define the graph homomorphism $\rho = (\rho ^0, \rho^1)\colon G_1\to E_1$. Define $\rho ^0(w) = \pi^0(w)$ for $w\in G_1^{0,0}= G^{0,1}$.
Now, for $u\in G^{0,0}$, set $D_u= \{X_1,\dots , X_k, X_{k+1},\dots ,X_l \}$, where $L_u = \{ X_1,\dots , X_k \}$. Then define $\rho ^0$ on an element
$v= v(x_1,\dots ,x_k, x_{k+1},\dots ,x_l)$, with $x_i\in X_i$, $i=1,\dots ,l$,  by
$$\rho^0(v(x_1,\dots, x_k, x_{k+1}, \dots ,x_l)) =  v(\pi^1(x_1),\dots ,\pi^1(x_k)) \in E_1^{0,1}. $$
Note that this is well-defined because, by conditions (2) and (4), we have that $C_{\pi^0(u)}= \{ \pi^1(X_1),\dots , \pi^1(X_k) \}$.

 Now we define $\rho^1$. An element in $G_1^1$ is of the form $\alpha^{x_i}(x_1,\dots ,\widehat{x_i}, \dots, x_k, x_{k+1}, \dots ,x_l )$, where $x_1,\dots ,x_l$ (and $X_1,\dots ,X_l$)
 are as above.
 For such an element, put
 $$\rho^1 (\alpha^{x_i}(x_1,\dots ,\widehat{x_i},\dots, x_k, x_{k+1}, \dots ,x_l )) = \alpha^{\pi^1(x_i)}(\pi^1(x_1),\dots , \widehat{\pi^1(x_i)}, \dots \pi^1(x_k)).$$
Clearly $\rho$ is a graph homomorphism. Finally, we have to check conditions (1)-(4) in Definition \ref{def:locally-complete} for $\rho$.

(1) Let $v(x_1,\dots ,x_k)\in E_1^{0,1}$, where $x_i\in X_i$ and $C_u= \{X_1,\dots , X_k\}$ for some $u\in E^{0,0}$. Since $\pi^0$ is surjective, there is $u'\in G^{0,0}$
such that $\pi^0 (u') = u$. Now, by conditions (2) and (4)  (for $\pi$),  we can write $D_{u'} = \{ Y_1,\dots , Y_k, Y_{k+1}, \dots , Y_l \}$ and $L_{u'} = \{ Y_1,\dots , Y_k\}$,
with $\pi^1( Y_i)= X_i$
for $i=1,\dots , k$. Take $y_i\in Y_i$ such that $\pi^1( y_i) = x_i$, $i=1,\dots , k$,  and take any $y_j\in Y_j$ for $j=k+1,\dots ,l$. Then
$$\rho ^0(v(y_1, \dots , y_k, y_{k+1},\dots , y_l)) = v(\pi^1(y_1), \dots , \pi^1( y_k)) = v(x_1,\dots , x_k) .$$
This shows that $\rho^0$ is surjective. For $i=1,\dots ,k$, we also get
$$\rho^1 (\alpha^{y_i}( y_1,\dots , \widehat{y_i},\dots, y_k,y_{k+1},\dots, y_l)) = \alpha ^{x_i}(x_1,\dots , \widehat{x_i}, \dots, x_k)\, ,$$
which shows that $\rho^1$ is also surjective.

(2) If $X\in L_1$, then there is $u\in G^{0,0}$ with $D_u=\{ X_1,\dots , X_k, X_{k+1},\dots , X_l \}$ and $L_u = \{ X_1,\dots , X_k \}$ such that
$X=X(x_i)$ for some $i$ with $1\le i\le k$. By the definition of $\rho^1$ and conditions (2) and (4) for $\pi^1$, we get that $\rho^1 (X) = X(\pi^1(x_i))$.

(3) Let $v= v(x_1,\dots , x_k,x_{k+1},\dots , x_l)$ be a vertex in $G_1^{0,1}$, where the notation is as before. Then
$$s_{G_1}^{-1} (v) = \{ \alpha ^{x_i} (x_1,\dots , \widehat{x_i},\dots , x_k,x_{k+1},\dots , x_l ) \mid i=1,\dots , k \}\, ,$$
so that it is clear that $\rho^1$ induces a bijection $\rho^1|_v\colon s_{G_1}^{-1} (v) \to s_{E_1}^{-1} (\rho^0(v))$.

(4) Let $w\in G_1^{0,0} = G^{0,1}$. Then the elements of $L^1_w$ are in bijective correspondence with the elements of $s_{G}^{-1}(w)$. If $x\in G^1$ is one of such vertices, then the corresponding element of
$L^1_w$ is $X(x)$, and $\tilde{\rho} (X(x)) = X(\pi^1 (x))$. Since $\pi^1$ establishes a bijection between $s_G^{-1}(w)$ and $s_E^{-1} (\pi^0 (w))$, we see that
$\tilde{\rho}$ establishes a bijection from $L^1_w$ onto $C^1_{\rho^0(w)}$, as desired.
 \end{proof}

 \begin{corollary}
  \label{cor:stabilize-inclusions}
  Let $(F,D)$ be a finite bipartite separated graph, and let $(E,C)$ be a complete subobject of $(F, D)$ in $\BFSGr$. Let $\{ (E_n,C^n)\}$ and $\{ (F_n,D^n)\}$ be the canonical sequences of
  finite bipartite separated graphs associated to $(E,C)$ and $(F,D)$ respectively. Then there are canonical locally complete maps $\pi_n^*\colon  (E_n,C^n)\to (F_n, D^n)$ such that
  $C^*(\pi_{n+1}^*)\circ \phi(E,C)_n = \phi(F,D)_n\circ C^*(\pi^*_n)$ for all $n\ge 0$. Consequently, if $\iota \colon (E,C)\to (F,D)$ is the inclusion map, and $\mathcal O (\iota)
  \colon \mathcal O (E,C) \to \mathcal O (F,D)$ is the induced $*$-homomorphism, then $\mathcal O (\iota) = \varinjlim_n C^*(\pi_n^*)$.
 \end{corollary}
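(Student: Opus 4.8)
The plan is to construct the maps $\pi_n^*$ by induction on $n$, and then transport the resulting commuting squares to the direct limits. For the base case $n=0$, recall that $(E_0,C^0)=(E,C)$ and $(F_0,D^0)=(F,D)$, and take $\pi_0^*$ to be the locally complete map whose underlying complete subobject of $(F,D)$ is $(E,C)$ itself and whose graph homomorphism $(G,L)\to(E,C)$ is the identity on $(E,C)$. Conditions (1)--(4) of Definition \ref{def:locally-complete} are then immediate, and the formulas of Lemma \ref{lem:locally-complete-induces} collapse to $C^*(\pi_0^*)(v)=v$ and $C^*(\pi_0^*)(e)=e$, now read inside $C^*(F,D)$ since the vertices and edges of $E$ are those of $F$; that is, $C^*(\pi_0^*)=C^*(\iota)$.

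For the inductive step, assume a canonical locally complete map $\pi_n^*\colon(E_n,C^n)\to(F_n,D^n)$ has been produced. Here I would use the consistency property built into Construction \ref{cons:complete-multiresolution}(c), namely that the first graph in the canonical sequence of $(E_n,C^n)$ is exactly $(E_{n+1},C^{n+1})$, with first connecting map equal to $\phi(E,C)_n$, and likewise for $(F_n,D^n)$. Feeding $\pi_n^*$ into Lemma \ref{lem:locally-complete-induces} (in the role of the map $\pi^*\colon(E,C)\to(F,D)$ there) then yields a canonical locally complete map $\pi_{n+1}^*:=\rho^*\colon(E_{n+1},C^{n+1})\to(F_{n+1},D^{n+1})$ together with the commuting square
\[
\phi(F,D)_n\circ C^*(\pi_n^*)=C^*(\pi_{n+1}^*)\circ\phi(E,C)_n,
\]
which is precisely the asserted relation. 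This completes the inductive construction of the family $\{\pi_n^*\}$ and proves the first statement.

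For the second statement, the commuting squares just obtained say that $\{C^*(\pi_n^*)\}$ is a morphism of the direct systems $(C^*(E_n,C^n),\phi(E,C)_n)$ and $(C^*(F_n,D^n),\phi(F,D)_n)$, and hence induces a $*$-homomorphism $\Phi:=\varinjlim_n C^*(\pi_n^*)\colon\mathcal O(E,C)\to\mathcal O(F,D)$, where I use $\mathcal O(E,C)=\varinjlim_n C^*(E_n,C^n)$ and $\mathcal O(F,D)=\varinjlim_n C^*(F_n,D^n)$. It remains to identify $\Phi$ with $\mathcal O(\iota)$. The crucial point (established in Section \ref{sect:bipsepgraphs}) is that the canonical map $C^*(E,C)=C^*(E_0,C^0)\to\mathcal O(E,C)$ into the limit is exactly the projection $\pi_{(E,C)}$, and similarly for $(F,D)$. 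Precomposing $\Phi$ with $\pi_{(E,C)}$ and using $C^*(\pi_0^*)=C^*(\iota)$ gives $\Phi\circ\pi_{(E,C)}=\pi_{(F,D)}\circ C^*(\iota)$, while naturality of $\pi$ (Proposition \ref{Ofunctor}) gives $\mathcal O(\iota)\circ\pi_{(E,C)}=\pi_{(F,D)}\circ C^*(\iota)$. Since $\pi_{(E,C)}$ is surjective, these two identities force $\Phi=\mathcal O(\iota)$, as desired.

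The main obstacle I anticipate is essentially bookkeeping: verifying that iterating the construction of Lemma \ref{lem:locally-complete-induces} really does land in the correct graphs of the canonical sequences, i.e. that $((E_n)_1,(C^n)^1)=(E_{n+1},C^{n+1})$ with matching connecting maps $\phi(E_n,C^n)_0=\phi(E,C)_n$, and correspondingly for $(F,D)$. This is a self-consistency property of the canonical-sequence construction rather than a genuinely hard step; once it is made precise, both the induction and the direct-limit argument are formal.
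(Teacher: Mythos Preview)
Your proposal is correct and follows exactly the approach of the paper, which proves the corollary in a single sentence: ``Use Lemma \ref{lem:locally-complete-induces} and induction, starting with the natural map $\iota \colon (E,C)\to (F,D)$, which is obviously a locally complete map.'' You have simply (and accurately) unpacked what that sentence means, including the self-consistency check $((E_n)_1,(C^n)^1)=(E_{n+1},C^{n+1})$ and the direct-limit identification $\Phi=\mathcal O(\iota)$ via surjectivity of $\pi_{(E,C)}$, both of which the paper leaves implicit.
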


 \begin{proof}
  Use Lemma \ref{lem:locally-complete-induces} and induction, starting with the natural map $\iota \colon (E,C)\to (F,D)$, which is obviously a locally complete map.
 \end{proof}

Using suitable orderings we will be able to determine a  {\it canonical complement} $H_{(E,C)}$ of $K_0(C^*(E,C))$ in $K_0 (\mathcal O (E,C))$, for each finite bipartite separated
graph $(E,C)$.

\begin{definition}
 \label{def:ordered-fsbipsepargraph}
Let $(E,C)$ be a bipartite finitely separated graph. An {\it order} in $(E,C)$ is given by the following data:
\begin{enumerate}
 \item A total order in each of the sets $C_v$, for $v\in E^{0,0}$.
 \item  A total order in each of the sets $s_E^{-1}(w)$, for $w\in E^{0,1}$.
 \item A total order in each of the sets $X$, for $X\in C$.
\end{enumerate}
It is clear that every bipartite finitely separated graph can be endowed with an order. When this is given we refer to $(E,C)$ as
an {\it ordered} separated graph. If $(E,C)$ is ordered, each complete subobject $(F,D)$ of $(E,C)$ in $\BFSGr$ inherits an order, defined by restricting the
corresponding total orderings.
 \end{definition}

\begin{notation}
\label{notation:HsubEC}
 Let $(E,C)$ be an ordered finite bipartite separated graph. Then the proof of Theorem \ref{thm:K0C(E,C)} and Lemma \ref{lem:K0multires}
  give a canonical
 complement of $K_0(C^*(E,C))$ in $K_0(C^*(E_1, C^1))$, namely the group
$\Z^{W_2}$, where $W_2$ is the set of vertices of $E_1^{0,1}$ of the form $v(x_1, \cdots , x_k)$, where $x_i\in X_i$, $C_u=\{X_1,\dots , X_k\}$
for some $u\in E^{0,0}$, and at least two different elements $x_i$ and $x_j$ are not the first elements in the respective sets $X_i$ and $X_j$ in the given order
on them. The choice of a given order in each of the sets $X\in C^n$, for all the sets $C^n$ appearing in the canonical sequence of finite bipartite separated graphs
$\{(E_n,C^n)\}$ associated to $(E,C)$ will thus,
by Theorem \ref{thm:K0C(E,C)},
give a {\it canonical complement} $H_{(E,C)}$ of $K_0(C^*(E,C))$ in $K_0 (\mathcal O (E,C))$. Indeed, we can inductively define an order on each of the finite bipartite separated graphs
$(E_n, C^n)$, as follows. Assume that, for some $n\ge 0$, an order has been defined on $(E_n, C^n)$, and let us define the order on $(E_{n+1}, C^{n+1})$.
For $v\in E_{n+1}^{0,0}= E_n^{0,1}$, we have that $C_v^{n+1}$ is in bijective correspondence with $s_{E_n}^{-1}(v)$ (through $X(x)\leftrightarrow x$).
Define the total order in $C_v^{n+1}$ as the order
induced by this bijection. For $v\in E_{n+1}^{0,1}$, we have $v= v(x_1,\dots , x_k)$, where $u\in E_n^{0,0}$, with $C^n_u=\{X_1,\dots , X_k \}$, and $x_i\in X_i$ for $i=1,\dots , k$.
(Here we are assuming that $X_1<X_2<\cdots <X_k$ in the given total order on $C^n_u$.) Now note that
$$s_{E_{n+1}}^{-1} (v)= \{\alpha^{x_i}(x_1, \dots, x_{i-1}, x_{i+1}, \dots , x_k) : i=1,\dots ,k \}.$$
We define the total order in $s_{E_{n+1}}^{-1} (v)$ by setting $\alpha^{x_i}(x_1, \dots, \hat{x}_i, \dots , x_k) < \alpha^{x_j}(x_1, \dots, \hat{x}_j, \dots , x_k)$
if and only if $i<j$.
Finally, let $X$ be an element of $C^{n+1}$.
Then there is $u\in E_n^{0,0}$, with $C^n_u=\{X_1,\dots , X_k \}$, and $x_i\in X_i$ for some $i=1,\dots , k$, such that $X= X(x_i)$. Recall that
$$X(x_i) = \{ \alpha^{x_i}(x_1,\dots, x_{i-1}, x_{i+1}, \dots , x_k) : x_j\in X_j, j\ne i \}\cong X_1\times \cdots \times X_{i-1}\times X_{i+1}\times \cdots \times X_k\, ,$$
so we take the left lexicographic order on $X(x_i)$.

This gives a canonical choice of sets $W_2,W_3, \dots $ and thus a canonical choice of a complement
$H_{(E,C)}:=\bigoplus_{k=2}^{\infty} \Z^{W_k}$
of $K_0(C^*(E,C))$ in $K_0(\mathcal O (E,C))$, so that
\begin{equation}
 \label{eq:can-choice-of-compl}
 K_0(\mathcal O (E,C) ) = K_0 (C^*(E,C)) \oplus H_{(E,C)} .
\end{equation}
\end{notation}

\medskip

\begin{lemma}
 \label{lem:injectivity-on-complements}
 Let $(F,D)$ be an ordered finite bipartite separated graph, and let $(E,C)$ be a complete subobject of $(F, D)$ in $\BFSGr$, endowed with the induced order.
 Let $\varphi \colon K_0(\mathcal O (E,C)) \to K_0(\mathcal O (F,D))$ denote the map induced by the inclusion $\iota \colon (E,C)\to (F,D)$. Then the restriction of $\varphi$ to
 $H_{(E,C)}$ is injective, and $\varphi (H_{(E,C)}) \subseteq H_{(F,D)}$.
\end{lemma}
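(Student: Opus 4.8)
The plan is to realise $\varphi$ as a direct limit of maps between the finite stages of the canonical sequences and to track what happens to the distinguished basis of $H$ stage by stage. By Corollary \ref{cor:stabilize-inclusions}, the inclusion $\iota$ induces $\mathcal O(\iota)=\varinjlim_n C^*(\pi_n^*)$, where $\pi_n^*\colon (E_n,C^n)\to (F_n,D^n)$ are the canonical locally complete maps; hence $\varphi=\varinjlim_n K_0(C^*(\pi_n^*))$. On the other hand, by Theorem \ref{thm:K0C(E,C)}, Lemma \ref{lem:K0multires} and the order conventions of Notation \ref{notation:HsubEC}, the canonical complement $H_{(E,C)}=\bigoplus_{k\ge 2}\Z^{W_k}$ has as a $\Z$-basis the vertex classes $[v]$ with $v\in W_k$, where $W_k$ consists of the sources $v(x_1,\dots,x_{k_u})$ produced by the multiresolution at a root $u$ that have at least two coordinates $x_i$ which are \emph{not} the first element of their set $X_i$ in the canonical order. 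The same description gives a free basis $\{[w]:w\in W_k',\ k\ge 2\}$ of $H_{(F,D)}=\bigoplus_{k\ge 2}\Z^{W_k'}$.

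First I would pin down the image of a single basis element. Fix $v\in W_k$. On vertices the locally complete map acts by $C^*(\pi_n^*)(v)=\sum_{w\in(\rho_n^0)^{-1}(v)}w$ (Lemma \ref{lem:locally-complete-induces}, Definition \ref{def:locally-complete}), a finite sum of mutually orthogonal vertex projections, so at the stage $n$ where $v$ first appears we get $K_0(C^*(\pi_n^*))([v])=\sum_{w\in(\rho_n^0)^{-1}(v)}[w]$. Since the $\pi_n^*$ are compatible with the connecting maps (Corollary \ref{cor:stabilize-inclusions}), passing to the limit yields $\varphi([v])=\sum_{w\in S_v}[w]$, where $S_v=(\rho_n^0)^{-1}(v)$. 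Because each $\rho_n^0$ is surjective (condition (1) of Definition \ref{def:locally-complete}), $S_v$ is nonempty, and because the $\rho_n^0$ are genuine maps, the sets $S_v$ are pairwise disjoint (for distinct $v$ of the same level this is injectivity of the preimage, and for different levels $k\ne k'$ it is automatic since $W_k'$ and $W_{k'}'$ lie in different vertex layers).

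The decisive point — and the step I expect to be the main obstacle — is to show $S_v\subseteq W_k'$, i.e. that every preimage of a distinguished vertex is again distinguished. Concretely, the locally complete map sends a source $v(x_1,\dots,x_{k_u},x_{k_u+1},\dots,x_{l})$ of $(F_n,D^n)$ to the source $v(\rho_n^1(x_1),\dots,\rho_n^1(x_{k_u}))$ of $(E_n,C^n)$, forgetting the extra coordinates indexed by the sets of $D\setminus C$; thus a preimage $w$ of $v$ keeps all coordinates of $v$ and only acquires new ones. What must be verified, by induction on $n$ against the inductive definition of the canonical order in Notation \ref{notation:HsubEC}, is that $\rho_n^1$ carries the first element of each set $X\in L^n$ to the first element of $\tilde{\rho}_n(X)\in C^n$. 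This holds because the first element of a set produced by the multiresolution is obtained by choosing the first element of each factor, a selection preserved under dropping coordinates. Granting this, the two coordinates of $v$ that are non-first in the $(E,C)$-order stay non-first in the $(F,D)$-order, so $w$ has at least two non-first coordinates and lies in $W_k'$.

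Finally I would assemble the conclusion. The computation shows that $\varphi$ sends the basis element $[v]$ of $H_{(E,C)}$ to $\sum_{w\in S_v}[w]$, a sum over a nonempty subset $S_v$ of the free basis of $H_{(F,D)}$, with the $S_v$ pairwise disjoint; in particular $\varphi(H_{(E,C)})\subseteq H_{(F,D)}$. Since distinct basis elements are mapped to $\Z$-combinations supported on disjoint nonempty subsets of a free basis, these images are $\Z$-linearly independent, so the restriction of $\varphi$ to $H_{(E,C)}$ is injective. This proves both assertions.
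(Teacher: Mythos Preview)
Your proposal is correct and follows essentially the same route as the paper's proof: both reduce to showing, via the locally complete maps $\pi_n^*$ of Corollary~\ref{cor:stabilize-inclusions}, that every vertex in the fibre $(\rho_n^0)^{-1}(v)$ over a distinguished vertex $v\in W_k$ lies in $W_k'$, and both isolate as the key inductive hypothesis that the edge map $\pi^1$ (respectively $\rho^1$) sends the first element of each $Y\in L$ to the first element of $\tilde\pi(Y)$. The paper packages this as a \emph{Claim} about an arbitrary locally complete map satisfying this first-element condition, verifies it explicitly from the lexicographic order and the formula for $\rho^1$, and notes the base case is trivial since there $\pi$ is the identity; your version sketches the same induction a bit more informally but with the same content, and your injectivity argument (disjoint nonempty supports in a free basis) is exactly the paper's orthogonality observation rephrased.
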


\begin{proof}
 By the proof of Theorem \ref{thm:K0C(E,C)} and Corollary \ref{cor:stabilize-inclusions}, it suffices to show inductively that, for each $n\ge 1$,
 the induced map $C^*(\pi_n^*) \colon C^*(E_n, C^n) \to C^*(F_n, D^n)$ sends each projection coming from $W_{n+1}$
 to an orthogonal sum of projections coming from $W_{n+1}'$, where $W_{n+1}$ corresponds to $(E_n,C^n)$ and $W_{n+1}'$ corresponds to $(F_n, D^n)$.
 The injectivity of $\varphi |_{H_{(E,C)}} $ follows then from the fact that $C^*(\pi_n^*)$ sends projections corresponding to distinct vertices
 of $E_n$ to orthogonal projections of $C^*(F_n,D^n)$ (see Lemma \ref{lem:locally-complete-induces}).
  In order to show this, it is enough to show,  by Lemma \ref{lem:locally-complete-induces} and induction, that the result holds
 for the first terms $(E_1, C^1)$, $(F_1, D^1)$ of the canonical
 sequences of finite bipartite separated graphs associated to $(E,C)$ and $(F,D)$ respectively, where $\pi^*\colon (E,C)\to (F,D)$ is a certain locally complete map.
 Concretely we will show the following statement:

 {\it Claim:} Let $\pi^*\colon (E,C)\to (F,D)$ be a locally complete map, and let $\rho^*\colon (E_1,C^1)\to (F_1, D^1)$ be the corresponding locally complete map, as defined
 in the proof of Lemma \ref{lem:locally-complete-induces}. Assume that the following condition holds:
 $\pi^1$ sends the first element of each $Y$ in $L$ to the first element of $\tilde{\pi}(Y)\in X$. Then $C^*(\rho^1)$ sends
 each projection coming from $W_2$ to a projection in $C^*(F_1, D^1)$ which is an orthogonal sum of projections coming from $W_2'$. Moreover, the map $\rho ^*$ has
 the same property as $\pi^*$, that is, it sends the first element of each $Y\in L_1$ to the first element of $\tilde{\rho} (Y)\in C_1$.

\medskip

{\it Proof of Claim:}
The set $W_2$ above is the set of projections of the form  $v= v(x_1, \dots , x_k)$,
 where $x_i\in X_i$, $C_u=\{ X_1,\dots , X_k\}$, and at least for two different indices $j,t$ we have that $x_j$ and $x_t$ are not the first elements of $X_j$ and $X_t$ respectively
 (see the proofs of Theorem \ref{thm:K0C(E,C)} and Lemma \ref{lem:K0multires}).
The set $W_2'$ is the analogous set of projections in $C^*(F,D)$.

 For $v= v(x_1,\dots , x_k)\in W_2$, we have
 $$C^*(\rho^*)(v) = \sum  v(y_1,\dots , y_k, y_{k+1}, \dots , y_l) \, ,$$
 where the sum is extended over all $(y_1,\dots , y_l)\in Y_1\times \cdots \times Y_l$, where $D_{u'} = \{Y_1, \dots , Y_l \}$ and $L_{u'} = \{ Y_1,\dots ,Y_k \}$, where
 $u'$ ranges over all the vertices in $G$ such that $\pi^0 (u') = u$,  and $\pi^1(y_i) = x_i$ for all $i=1,\dots , k$. (Note that here the index $l$ may depend on $u'$.)

 Now by the hypothesis on $\pi^1$, we have that $y_j$ is not the first element of $Y_j$ and $y_t$ is not the first element of $Y_t$, showing that each $ v(y_1,\dots , y_k, y_{k+1}, \dots , y_l)$
 belongs to $W_2'$.

Finally we check that $\rho^1$ has the same property as $\pi^1$. Take $Y\in L^1$. Then there is $u\in G^{0,0}$, with $D_u=\{X_1,\dots, X_k, X_{k+1}, \dots , X_l \}$ and $L_u= \{X_1,\dots, X_k \}$
such that $Y= X(x_i)$ for some $x_i\in X_i$ with $1\le i\le k$. The first element of $Y$ is thus the element
$$e= \alpha^{x_i}(x_1,\dots ,\widehat{x_i}, \dots, x_k, x_{k+1},\dots , x_l), $$
where, for each $j\ne i$, $x_j$ is the first element of $X_j$. Consequently, by the hypothesis on $\pi^1$, the element $\pi^1(x_j)$ is the first element of $\tilde{\pi}(X_j)$, for $j\ne i$ and
$j\in \{1,\dots , k\}$. Therefore
$$\rho^1 (e) = \alpha ^{\pi^1(x_i)} ( \pi^1(x_1) , \dots, \widehat{\pi^1(x_i)} , \dots,  \pi^1(x_k) ) \, ,$$
which is the first element of $X(\pi^1(x_i)) = \tilde{\rho} (Y)$. \qed

Note that the hypothesis on $\pi^1$ is trivially satisfied in the base case, that is, in the case where $(E,C)$ is a complete subobject of $(F,D)$, Indeed, in that case $(G,L)= (E,C)$ and
$\pi$ is the identity. Therefore, the Claim gives the desired result by induction, using Lemma \ref{lem:locally-complete-induces}.
   \end{proof}

\begin{theorem}
 \label{thm:mainK0fggrs}
 Let $(E,C)$ be an ordered bipartite finitely separated graph and let $\mathcal C$ be the directed set of finite complete subobjects of $(E,C)$ in $\BFSGr$.
 For complete subobjects $(F,D), (F',D')$ of $(E,C)$,  with $(F,D)\le (F',D')$, let $\varphi_{(F',D'),(F,D)}\colon K_0(\mathcal O (F,D))\to K_0(\mathcal O (F',D'))$ be the natural
 map. Write $K_0(\mathcal O (F,D))= K_0(C^*(F,D))\oplus H_{(F,D)}$ for each $(F,D)\in \mathcal C$, where $H_{(F,D)}$ is the canonical complement associated to the induced order on $(F,D)$,
 as defined in Notation \ref{notation:HsubEC}. Then the following properties hold:
 \begin{enumerate}
  \item For $(F,D), (F',D')\in \mathcal C$ with $(F,D)\le (F',D')$, the map $\varphi_{(F',D'), (F,D)}$ induces an injective homomorphism from
  $H_{(F,D)}$ to $H_{(F', D' )}$.
\item We have
$$K_0 (\mathcal O (E,C)) \cong K_0(C^*(E,C))\bigoplus H\ \cong \coker (1_C-A_{(E,C)}) \bigoplus H \, ,$$
where $H = \varinjlim_{(F,D)\in \mathcal C} H_{(F,D)}$. In particular $H$ is a torsion-free group, and the maps $\varphi _{(E,C), (F,D)}|_{H_{(F,D)}}$ are injective for all
$(F,D)\in \mathcal C$.
   \end{enumerate}
 \end{theorem}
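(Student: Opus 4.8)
The strategy is to assemble the statement from the finite bipartite case, using Theorem \ref{thm:K0C(E,C)} and Lemma \ref{lem:injectivity-on-complements} together with the continuity of the functors $C^*$, $\mathcal O$ and of $K$-theory, while checking at each stage that the direct-sum decomposition is respected by the connecting maps.

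For part (1), I would observe that $(F,D)\le (F',D')$ means precisely that $(F,D)$ is a complete subobject of $(F',D')$ in $\BFSGr$, and that the order $(F,D)$ inherits from $(E,C)$ coincides with the one it inherits as a complete subobject of $(F',D')$, since both are restrictions of the same total orders. Hence Lemma \ref{lem:injectivity-on-complements}, applied to the ambient ordered graph $(F',D')$ and its complete subobject $(F,D)$, shows directly that $\varphi_{(F',D'),(F,D)}$ restricts to an injective homomorphism $H_{(F,D)}\to H_{(F',D')}$ with $\varphi_{(F',D'),(F,D)}(H_{(F,D)})\subseteq H_{(F',D')}$.

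For part (2), I would first use Proposition \ref{prop:B-version-of-functoriality} to write $(E,C)=\varinjlim_{(F,D)\in\mathcal C}(F,D)$ in $\BFSGr$ and, by continuity, $\mathcal O(E,C)=\varinjlim_{\mathcal C}\mathcal O(F,D)$ and $C^*(E,C)=\varinjlim_{\mathcal C}C^*(F,D)$; applying the continuous functor $K_0$ then gives $K_0(\mathcal O(E,C))=\varinjlim_{\mathcal C}K_0(\mathcal O(F,D))$ and $K_0(C^*(E,C))=\varinjlim_{\mathcal C}K_0(C^*(F,D))$. The key point is that the directed system $\{K_0(\mathcal O(F,D))\}$ splits compatibly. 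Writing $K_0(\mathcal O(F,D))=K_0(C^*(F,D))\oplus H_{(F,D)}$, where the first summand denotes the image of the split monomorphism $K_0(\pi_{(F,D)})$, I claim the connecting maps are block diagonal: $\varphi_{(F',D'),(F,D)}$ carries the first summand into the corresponding first summand by the naturality of $\pi$ built into the functor $\mathcal O$ (Proposition \ref{Ofunctor}), while it carries $H_{(F,D)}$ into $H_{(F',D')}$ by part (1). Since colimits of abelian groups commute with finite direct sums, block diagonality yields
$$K_0(\mathcal O(E,C))\cong\Big(\varinjlim_{\mathcal C}K_0(C^*(F,D))\Big)\oplus\Big(\varinjlim_{\mathcal C}H_{(F,D)}\Big)\cong K_0(C^*(E,C))\oplus H,$$
with $H=\varinjlim_{\mathcal C}H_{(F,D)}$, and $K_0(C^*(E,C))\cong\coker(1_C-A_{(E,C)})$ by Theorem \ref{thm:KTHsepgraph}. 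Each $H_{(F,D)}$ is free abelian by Theorem \ref{thm:K0C(E,C)}, and a direct limit of torsion-free groups is torsion-free, so $H$ is torsion-free. Finally, since every connecting map among the $H_{(F,D)}$ is injective by part (1), the canonical maps into the colimit $H$ are injective; identifying these with the restrictions $\varphi_{(E,C),(F,D)}|_{H_{(F,D)}}$ gives the last assertion.

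The only genuinely new work beyond citing the finite-graph results is the verification of block diagonality of the connecting maps, so that the colimit splits as a direct sum. The harder of the two inclusions, $\varphi_{(F',D'),(F,D)}(H_{(F,D)})\subseteq H_{(F',D')}$, is exactly the content of Lemma \ref{lem:injectivity-on-complements}, whose proof is the real technical heart of the argument; the inclusion for the $K_0(C^*)$-summand is a formal consequence of the functoriality of $\pi$, and everything else is bookkeeping about colimits.
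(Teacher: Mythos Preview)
Your proposal is correct and follows essentially the same approach as the paper's own proof, which simply cites Notation \ref{notation:HsubEC} for the decomposition, Lemma \ref{lem:injectivity-on-complements} for part (1), and Proposition \ref{prop:B-version-of-functoriality} together with continuity of $K_0$ and part (1) for part (2). Your write-up is more explicit than the paper's terse one-line proof---in particular you spell out the block-diagonality of the connecting maps and the passage to the colimit---but the ingredients and logical structure are identical.
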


   \begin{proof}
The decomposition  $K_0(\mathcal O (F,D))= K_0(C^*(F,D))\oplus H_{(F,D)}$ for each $(F,D)\in \mathcal C$ is described in Notation \ref{notation:HsubEC}.
(1) follows from Lemma \ref{lem:injectivity-on-complements}, and (2) follows from Proposition \ref{prop:B-version-of-functoriality}, the continuity of $K_0$ and (1).
\end{proof}

\begin{theorem}
\label{thm:finalK0}
Let $(E,C)$ be a finitely separated graph. Then $K_0 (\mathcal O (E,C))= K_0(C^*(E,C))\oplus H$, where $H$ is a torsion-free group.
\end{theorem}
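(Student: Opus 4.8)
The plan is to reduce the general finitely separated case to the \emph{ordered bipartite} case, where the decomposition has already been established in Theorem \ref{thm:mainK0fggrs}, and then to transport the conclusion along the $M_2$-stabilization supplied by the functor ${\bf B}$. The only genuinely new content is the bookkeeping needed to move the splitting and its torsion-free complement through the stabilization isomorphisms; torsion-freeness itself is preserved by any group isomorphism, so once the transport is justified the result is immediate.

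First I would pass from $(E,C)$ to the bipartite finitely separated graph $(\tilde{E},\tilde{C}) = {\bf B}(E,C)$ of Definition \ref{defi:BFSGr}, and fix any order on it, which is possible for every bipartite finitely separated graph by Definition \ref{def:ordered-fsbipsepargraph}. Theorem \ref{thm:mainK0fggrs} then applies to $(\tilde{E},\tilde{C})$ and shows that $K_0(\pi_{(\tilde{E},\tilde{C})})$ is a split monomorphism whose cokernel $H$ is torsion-free, so that $K_0(\mathcal O(\tilde{E},\tilde{C})) \cong K_0(C^*(\tilde{E},\tilde{C})) \oplus H$. Next I would invoke the natural isomorphisms $C^* \circ {\bf B} \cong {\bf M}_2 \circ C^*$ and $\mathcal O \circ {\bf B} \cong {\bf M}_2 \circ \mathcal O$ of Proposition \ref{prop:B-version-of-functoriality}(b). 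Because $\pi$ is a natural transformation $C^* \Rightarrow \mathcal O$, these isomorphisms are compatible with the canonical projections, which is exactly the commutativity of the square (\ref{eq:comdigram-for-the-double}) (stated there for finite graphs, but valid for all finitely separated graphs by the same naturality). Applying $K_0$ and using that the corner embedding $A \hookrightarrow M_2(A)$ induces an isomorphism on $K_0$, I obtain the commutative square
\begin{equation*}
\begin{CD}
K_0(C^*(E,C)) @>{\cong}>> K_0(C^*(\tilde{E},\tilde{C}))\\
@V{K_0(\pi_{(E,C)})}VV @VV{K_0(\pi_{(\tilde{E},\tilde{C})})}V\\
K_0(\mathcal O(E,C)) @>{\cong}>> K_0(\mathcal O(\tilde{E},\tilde{C}))
\end{CD}
\end{equation*}
in which both horizontal arrows are isomorphisms.

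To finish, I would argue that since $K_0(\pi_{(\tilde{E},\tilde{C})})$ is a split monomorphism with torsion-free cokernel $H$ and the horizontal maps of the square are isomorphisms, $K_0(\pi_{(E,C)})$ is likewise a split monomorphism whose cokernel is isomorphic to $H$; injectivity then identifies its image with $K_0(C^*(E,C))$, yielding $K_0(\mathcal O(E,C)) \cong K_0(C^*(E,C)) \oplus H$ with $H$ torsion-free. The hard part will be verifying that the commutative square above is legitimate at the level of $\pi$ rather than just on objects, i.e.\ that the stabilization isomorphisms of Proposition \ref{prop:B-version-of-functoriality}(b) intertwine $M_2(\pi_{(E,C)})$ with $\pi_{(\tilde{E},\tilde{C})}$; this is where one must appeal to the functoriality of ${\bf B}$ together with the fact that $\pi$ is a natural transformation, so that (\ref{eq:comdigram-for-the-double}) holds for arbitrary finitely separated $(E,C)$ and not merely finite ones. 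Everything else is routine once this compatibility is in place.
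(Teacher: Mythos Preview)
Your proposal is correct and follows essentially the same route as the paper: the paper's one-line proof invokes \cite[Proposition 9.1]{AE} (which gives the commutative square (\ref{eq:comdigram-for-the-double}) for arbitrary separated graphs, not just finite ones) together with Theorem \ref{thm:mainK0fggrs}, and you have simply unpacked this, using the paper's own Proposition \ref{prop:B-version-of-functoriality}(b) in place of the external citation. Your worry about whether (\ref{eq:comdigram-for-the-double}) extends beyond the finite case is already settled by \cite[Proposition 9.1]{AE} itself, so no additional argument is needed there.
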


\begin{proof}
 This follows from \cite[Proposition 9.1]{AE} and Theorem \ref{thm:mainK0fggrs}.
\end{proof}

\end{document}